\title[Divides and hyperbolic volumes]
{Divides and hyperbolic volumes}
\author{Ryoga Furutani}
\address{
Department of Mathematics~ \slash ~International Institute for Sustainability with Knotted Chiral Meta Matter (WPI-SKCM$^2$), 
Hiroshima University, 1-3-1 Kagamiyama, Higashi-Hiroshima, 739-8526, Japan}
\email{ryoga.furutani0409@gmail.com}
\author{Yuya Koda}
\address{
Department of Mathematics, Hiyoshi Campus, Keio University, 4-1-1, Hiyoshi, Kohoku, Yokohama, 223-8521, Japan~ \slash ~ 
International Institute for Sustainability with Knotted Chiral Meta Matter (WPI-SKCM$^2$), 
Hiroshima University, 1-3-1 Kagamiyama, Higashi-Hiroshima, 739-8526, Japan}
\email{koda@keio.jp}
\thanks{
Y. K. is supported by JSPS KAKENHI Grant Numbers JP20K03588, 
JP21H00978 and JP23H05437. 
}
\theoremstyle{plain}
\newtheorem*{theorem*}{Theorem}
\newtheorem*{lemma*} {Lemma}
\newtheorem*{corollary*} {Corollary}
\newtheorem*{proposition*}{Proposition}
\newtheorem*{conjecture*}{Conjecture}
\newtheorem{theorem}{Theorem}[section]
\newtheorem{lemma}[theorem]{Lemma}
\newtheorem{corollary}[theorem]{Corollary}
\newtheorem{proposition}[theorem]{Proposition}
\theoremstyle{remark}
\newtheorem*{definition}{Definition}
\newtheorem*{claim*}{Claim}
\newtheorem{example}{Example}
\theoremstyle{definition}
\newtheoremstyle{citing}
  {}
  {}
  {\itshape}
  {}
  {\bfseries}
  {.}
  {.5em}
  {\thmnote{#3}}
\theoremstyle{citing}
\newcommand{\NN}{\mathbb{N}}
\newcommand{\RR}{\mathbb{R}}
\newcommand{\CC}{\mathbb{C}}
\newcommand{\Nbd}{\operatorname{Nbd}}
\newcommand{\Int}{\operatorname{Int}}
\newcommand{\gl}{\operatorname{gl}}
\newcommand{\vol}{\operatorname{vol}}
\newcommand{\slope}{\operatorname{sl}}
\newcommand\erase{\bgroup\markoverwith{\textcolor{red}{\rule[.5ex]{2pt}{0.4pt}}}\ULon}
\begin{document}

\maketitle

\begin{abstract}
A divide is the image of a proper and generic immersion of a compact $1$-manifold into the $2$-disk. 
Due to A'Campo's theory, each divide is associated with a link in the 3-sphere. 
In this paper, we reveal a hidden hyperbolic structure in the theory of links of divides.  
More precisely, we show that the complement of the link of a divide can be obtained by Dehn filling 
a hyperbolic $3$-manifold that admits a decomposition into several ideal regular tetrahedra, octahedra and 
cuboctahedra, where the number of each of those three polyhedra is determined by types of the double points of the divide. 
This immediately gives an upper bound of the hyperbolic volume of the links of divides, which is shown to be asymptotically sharp. 
An idea from the theory of Turaev's shadows plays an important role here. 
\end{abstract}

\vspace{1em}

\begin{small}
\hspace{2em}  \textbf{2020 Mathematics Subject Classification}: 
32S55; 57K10, 57K32, 57Q60, 57R05


\hspace{2em} 
\textbf{Keywords}: 
divide, link, hyperbolic 3-manifold, shadow.
\end{small}

\section*{Introduction}

Let $D$ be a $2$-disk, and $W$ be the subspace of the tangent bundle $TD = D \times \RR^2$ of $D$ defined by 
$W = \{ (x,u) \in TD \mid x \in D,~ u \in T_{x}D,~|x|^{2} + |u|^{2} \leq 1\}$, which is a $4$-ball.
The image $P$ of a proper and generic immersion of a compact $1$-manifold into $D$ is called a \textit{divide}. 
Each divide $P$ defines a link $L_{P} := \{ (x,u) \in W \mid x \in P,~ u \in T_{x}P,~|x|^{2} + |u|^{2} = 1\}$, called the \textit{link of $P$},  
in the $3$-sphere $\partial W = S^3$. 
Divides and their links are introduced by A'Campo \cite{Camp98} as part of his study on complex plane curve singularities. 
The links of divides are actually a generalization of the links of singular points of complex plane curves, and in particular, 
when a divide is connected, its link is a fibered link.
In \cite {Camp98_2}, A'Campo defined a class of links of divides, called \textit{slalom knots}, and gave 
a necessary and sufficient condition for those knots to be hyperbolic, where 
a link $L \subset S^3$ is said to be \textit{hyperbolic} if $S^3 - L$ admits a complete hyperbolic structure of finite volume. 
Consequently, it was shown that they include many hyperbolic ones.
In this paper, we explain another hidden hyperbolic structure behind links of divides. 

Let $P \subset D$ be a connected divide. 
We call a connected component of $D - P$ a \textit{region} of $P$. 
A region of $P$ is said to be \textit{internal} if it does not intersect $\partial D$. 
Otherwise, $P$ is said to be \textit{external}. 
We define a $2$ dimensional polyhedron $X_P$ in the 4-ball $W$ to be the union of $2$-disk $D$ and 
$\{(x, u) \in W \mid x \in P,~u \in T_{x }P\}$ minus all external regions of $P$.
This polyhedron $X_P$ is properly embedded in $W$, and $\partial X_P$ is exactly the link $L_{P}$ of $P$. 
Furthermore, the $4$-ball $W$ collapses onto $X_P$ in a natural way.
Let $\pi : \partial W \to X_P$ be the projection obtained by restricting this collapse $W \searrow X_P$ to the boundary $\partial W$ of a $4$-ball $W$. 
Let $D_1, D_2, \ldots, D_n$ be the closures of the connected components of $X_P - \Nbd (P; X_P)$ contained in internal regions of $P$, which are all $2$-disks. 
Set $M_P := \pi^{-1} (\Nbd (P; X_P))$ and $V_i := \pi^{-1} (D_i)$ ($i= 1 , 2 , \ldots, n$). 
Each $V_i$ here is a solid torus.
Then, the exterior $E(L_{P})$ of the link $L_{P}$ can be decomposed as 
$ E(L_P) = M_P \cup ( \bigcup_{i=1}^n V_i )$.
In other words, 
$E(L_{P})$ can be obtained by performing Dehn fillings on the $3$-manifold $M_P$. 
The purpose of this paper is to clarify the geometric (hyperbolic) structure of $M_P$.

Suppose that the divide $P$ is prime. Here, a divide $P$ is said to be \textit{prime} 
if the link $L_P$ of $P$ is prime. 
In this case, the vertices of  the polyhedron $X_P$ can be classified into the six types shown in 
Figure \ref{figure:local_models_of_XP_prime}. 
We will show that we can decompose the preimage of a neighborhood of each vertex 
into  several truncated ideal regular hyperbolic polyhedra, 
so that we get an ideal hyperbolic regular polyhedral decomposition of the interior of $M_P$.  
In fact, the following is our main theorem: 
\begin{theorem}
\label{thm:introduction main theorem}
Let $P \subset D$ be a connected prime divide with at least one double point. 
If $P$ has a double point of Type $6$-$3$, then $L_P$ is the Hopf link. 
Otherwise, let $n_1$, $n_2$, $n_3$, $n_4$ and $n_5$ be the number of its double points of 
Types $1$, $2$, $3$, $4$-$2$ and $5$-$3$, respectively.  
Then $\Int M_P$ is a hyperbolic $3$-manifold of volume 
\[ 10 n_3 v_{\mathrm{tet}} + (4n_1 + 2n_4 + n_5) v_{\mathrm{oct}} + n_2 v_{\mathrm{cuboct}}. \]
Here, $v_{\mathrm{tet}} = 1.014 \ldots$, $v_{\mathrm{oct}} = 3.663 \ldots$ and $v_{\mathrm{cuboct}} = 12.046 \ldots$ are 
the volumes of ideal hyperbolic regular tetrahedron, octahedron and cuboctahedron, respectively. 
\end{theorem}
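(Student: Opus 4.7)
The plan is to decompose $\Int M_P$ into ideal hyperbolic regular polyhedra by working locally around each vertex of the polyhedron $X_P$, in the spirit of Turaev's shadows, where a polyhedral $3$-manifold is assembled from truncated polyhedra indexed by the vertices of a shadow. First I would dispose of the Type $6$-$3$ case. Because $P$ is connected and prime, a Type $6$-$3$ vertex forces a very restricted local configuration of $X_P$, and I expect a direct combinatorial argument on the possible shapes of $P$ to show that the only such divide has link equal to the Hopf link, which is identified directly from its explicit picture.

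For the generic case I would exhibit, for each vertex $v$ of $X_P$, an explicit realization of $\pi^{-1}(\Nbd(v; X_P))$ as the interior of a union of truncated ideal regular hyperbolic polyhedra: $4$ regular ideal octahedra at each Type $1$ vertex, one regular ideal cuboctahedron at each Type $2$ vertex, $10$ regular ideal tetrahedra at each Type $3$ vertex, $2$ octahedra at each Type $4$-$2$ vertex, and one octahedron at each Type $5$-$3$ vertex. The identification at each vertex is a finite combinatorial check: one reads off the local sheets of $X_P$ meeting at $v$, records how the collapse $W \searrow X_P$ behaves on each sheet, and matches the resulting cell structure with the truncation pattern of the corresponding regular polyhedron. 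These local models then glue along the preimages of the edges of $X_P$, yielding a global polyhedral decomposition of $\Int M_P$ whose piece counts are exactly the coefficients appearing in the volume formula.

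To promote this combinatorial decomposition to a complete finite-volume hyperbolic structure, I would verify that the dihedral angles around every edge of the resulting complex sum to $2\pi$ and that each cusp cross-section is a Euclidean torus. Both reductions are finite case checks: the edge-angle condition follows from the regularity of the pieces together with the local combinatorics of $X_P$ at each type of edge (which is determined by the adjacent vertex types), and completeness reduces to a cusp-by-cusp inspection that the links of ideal vertices tile flat tori. The volume formula then follows immediately by summing $10 v_{\mathrm{tet}}$ per Type $3$ vertex, the appropriate multiple of $v_{\mathrm{oct}}$ per Type $1$, $4$-$2$ and $5$-$3$ vertex, and $v_{\mathrm{cuboct}}$ per Type $2$ vertex.

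The main obstacle will be the local identification step: one must show that the decomposition $X_P$ dictates in a neighborhood of each vertex genuinely realizes the claimed truncated regular ideal polyhedra, which in shadow-theoretic terms amounts to checking that the gleams on the internal regions of $P$ (encoding how the $2$-polyhedron $X_P$ sits inside $W$) are compatible with the regular-polyhedral angle constraints. Once this compatibility is established, the edge-angle sum and cusp verifications reduce to routine case analysis over the five admissible vertex types, and the theorem follows.
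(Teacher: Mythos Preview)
Your outline matches the paper's strategy: local polyhedral models at each vertex (with exactly the counts you state), glued along preimages of edges, followed by the $2\pi$ dihedral-angle check and a cusp verification. Two implementation points are worth flagging. For Types $1$, $2$, $4$-$2$ and $5$-$3$, the paper does not analyze $\pi^{-1}(C)$ directly but first passes to the quotient by the involution $\iota:(x,u)\mapsto(x,-u)$: the $3$-thickening $M_{C/\iota}$ of $C/\iota$ is then visibly a truncated ideal regular octahedron or a simple piece of one, and $\pi^{-1}(C)$ is assembled from four copies glued along mirror faces. This symmetry is what makes your ``finite combinatorial check'' tractable in those cases.

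More importantly, for Type $3$ the paper explicitly remarks that a direct construction along these lines ``seems to be hard,'' and instead proceeds indirectly: it takes a specific divide $P_0$ with a single Type $3$ vertex, shows by an explicit cut-and-reglue argument along three-holed spheres that $M_{P_0}$ is homeomorphic to a known link exterior already carrying a ten-tetrahedron ideal decomposition, and then cuts that decomposition open along two totally geodesic pairs of pants to extract the local model for a Type $3$ piece. So the local identification is not uniform across the five types, and your plan should build in this detour at Type $3$. Finally, the gleam compatibility you flag is not really the obstruction here: $X_P$ comes already embedded in $W$ rather than being reconstructed from abstract gleam data, so the geometry is read off from the explicit $3$-thickenings rather than from gleams.
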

In order to get the above decomposition, we use the idea from the theory of shadows introduced by Turaev. 
A shadow is a $2$-dimensional simple polyhedron $X$ properly and locally-flatly embedded in a $4$-manifold $W$ so that 
$W$ collapses onto $X$. 
Each region of a shadow is assigned a half integer called a \textit{gleam}, which is a kind of relative Euler number. 
From a shadow with gleams, we can reconstruct the $4$-manifold $W$ in a canonical way. 
Costantino and Thurtston \cite{CT08} revealed that the intersection of $\partial W$ and 
the preimages of a regular neighborhood of a vertex of a shadow by the collapsing map 
can be decomposed into two truncated ideal regular hyperbolic octahedra, and the octrahedra for the vertices are 
glued together according to the combinatorial structure of $X$ to form a hyperbolic 3-manifold of volume 
$2 n v_{\mathrm{oct}}$, where $n$ is the number of vertices of $X$. 

As an immediate consequence of Theorem \ref{thm:introduction main theorem}, we have the following upper bound 
of the volumes of the links of divides, which, in turn, is proved to be asymptotically sharp in Theorem \ref{thm: the upper boune is sharp}. 
\begin{corollary}
\label{cor:introduction main}
Let $P \subset D$ be a connected prime divide with at least one double point. 
Let $n_1$, $n_2$, $n_3$, $n_4$ and $n_5$ be the number of its vertices of 
Types $1$, $2$, $3$, $4$-$2$ and $5$-$3$, respectively.  
Then the hyperbolic volume of $L_P \subset S^3$ is less than 
\[ 10  n_3 v_{\mathrm{tet}} + (4n_1 + 2n_4 + n_5) v_{\mathrm{oct}} + n_2 v_{\mathrm{cuboct}}, \]
where the hyperbolic volume of a non-hyperbolic links is defined to be zero. 
\end{corollary}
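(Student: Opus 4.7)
The plan is to combine Theorem \ref{thm:introduction main theorem} with Thurston's theorem on the strict monotonicity of hyperbolic volume under Dehn filling. By the construction described in the introduction, $E(L_P) = M_P \cup \bigcup_{i=1}^{n} V_i$ is obtained from $M_P$ by gluing the solid tori $V_i$, each along the meridian of its core, so $E(L_P)$ is an honest (nontrivial) Dehn filling of $M_P$. Assume first that $P$ has no double point of Type $6$-$3$; Theorem \ref{thm:introduction main theorem} then provides $\Int(M_P)$ with a complete finite-volume hyperbolic structure of volume
\[
V := 10 n_3 v_{\mathrm{tet}} + (4n_1 + 2n_4 + n_5) v_{\mathrm{oct}} + n_2 v_{\mathrm{cuboct}}.
\]

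Next I would invoke Thurston's theorem: for any Dehn filling $N$ of a cusped finite-volume hyperbolic $3$-manifold $M$, one has the simplicial-volume inequality $\|N\| \le \|M\|$, with strict inequality whenever $N$ is itself hyperbolic. Combined with Mostow rigidity and the Gromov--Thurston equality $v_{\mathrm{tet}}\|N\| = \vol(N)$ for hyperbolic $N$, this gives
\[
\vol(S^3 - L_P) < \vol(\Int(M_P)) = V
\]
in the case that $L_P$ is hyperbolic. If $L_P$ is not hyperbolic, then $\vol(L_P) = 0$ by the stated convention, and the desired inequality reduces to $0 < V$, which holds as soon as at least one of $n_1, \dots, n_5$ is positive.

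The remaining case, in which $P$ has a Type $6$-$3$ double point, is handled directly by Theorem \ref{thm:introduction main theorem}: there $L_P$ is the Hopf link, and its hyperbolic volume vanishes. The proof is essentially routine given the main theorem; the only substantive ingredient is Thurston's strict-monotonicity result under Dehn filling, and the only small point that must be checked is that the fillings along the $V_i$ are genuinely nontrivial, which is immediate from the construction (each $V_i$ is attached along the meridian of its core, an essential curve on $\partial M_P$). Thus the main ``obstacle'' is really no more than locating the right citation for the strict inequality; no further geometric or combinatorial work is required beyond Theorem \ref{thm:introduction main theorem}.
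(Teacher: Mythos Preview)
Your approach is correct and is exactly what the paper has in mind: the paper states the corollary as an ``immediate consequence'' of Theorem~\ref{thm:introduction main theorem} without giving a proof, and the unwritten argument is precisely Thurston's strict monotonicity of volume under Dehn filling, applied to the decomposition $E(L_P)=M_P\cup\bigcup_i V_i$.

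Two small points. First, to guarantee that the Dehn filling is genuinely nontrivial (so that the inequality is strict when $L_P$ is hyperbolic) you should observe that there is at least one internal region, i.e.\ $n\ge 1$; this follows because every vertex of Type $1$, $2$, $3$, $4$-$2$ or $5$-$3$ touches an internal region. Second, your treatment of the Type $6$-$3$ case is incomplete: if $P$ has a Type $6$-$3$ vertex then (by connectivity) it is the $X$-shaped divide with a single double point, so $n_1=\cdots=n_5=0$ and the right-hand side is $0$; the strict inequality $0<0$ then fails. This is really an imprecision in the corollary's statement (which, like the theorem, should exclude this degenerate case) rather than a flaw in your argument, but you should not claim the case is ``handled''.
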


\section{Preliminaries}

\subsection{Divides}
Let $D \subset \RR^2$ be a $2$-disk. 
A divide $P$ is the image of a proper, generic immersion from a compact $1$-manifold into $D$. 
That is, $P$ satisfies the following: 
\begin{itemize}
\item
$\partial D \cap P = \partial P$, and $P$ intersects $\partial D$ transversely at these points; and 
\item
The self intersections of $P$ are transverse, and there are no triple points. 
\end{itemize}
For a divide $P$, a point where $P$ intersects $\partial D$ is called an \textit{endpoint} of $P$. 
A connected component of $D - P$ is called a \textit{region} of $P$. 
A region that has no intersection with $\partial D$ is called an \textit{internal region}. 
Otherwise, a region is called an \textit{external region}.

We identify the tangent bundle $TD$ of the $2$-disk $D$ with $D \times \RR^2 \subset \RR^4$. 
Note that under this identification, $W := \{ (x, u) \in TD \mid x \in D, u \in T_x D, |x|^2+|u|^2 \leq 1 \} \subset \RR^4$ is the 4-ball. 
The \textit{link} $L_P$ of a divide $P$ is then defined by 
\[ L_P := \{ (x, u) \in W  \mid x \in P, u \in T_x P , |x|^2+|u|^2 = 1 \}  \subset \partial W \cong S^3 . \]
The existence of a natural involution 
$\iota : \partial W \to \partial W$, $(x, u) \mapsto (x, -u)$ shows that the link $L_P$ of a divide is strongly invertible. 
When the divide $P$ is connected, its link $L_P$ is a fibered link (see A'Campo \cite{Camp98}).

\begin{example}
\label{ex:divide}
Let $P$ be the divide shown on the left top in Figure \ref{figure:divide_example}. 
Then, according to the method described by Hirasawa \cite{Hira02}, we obtain the diagram of the link $L_P$ of $P$ as follows. 
First, we perturb $P$ so that it becomes a line segment with a slope of $1$ or $-1$ piecewise as shown on the left in Figure \ref{figure:divide_example}.
\begin{figure}[htbp]
\centering\includegraphics[width=13cm]{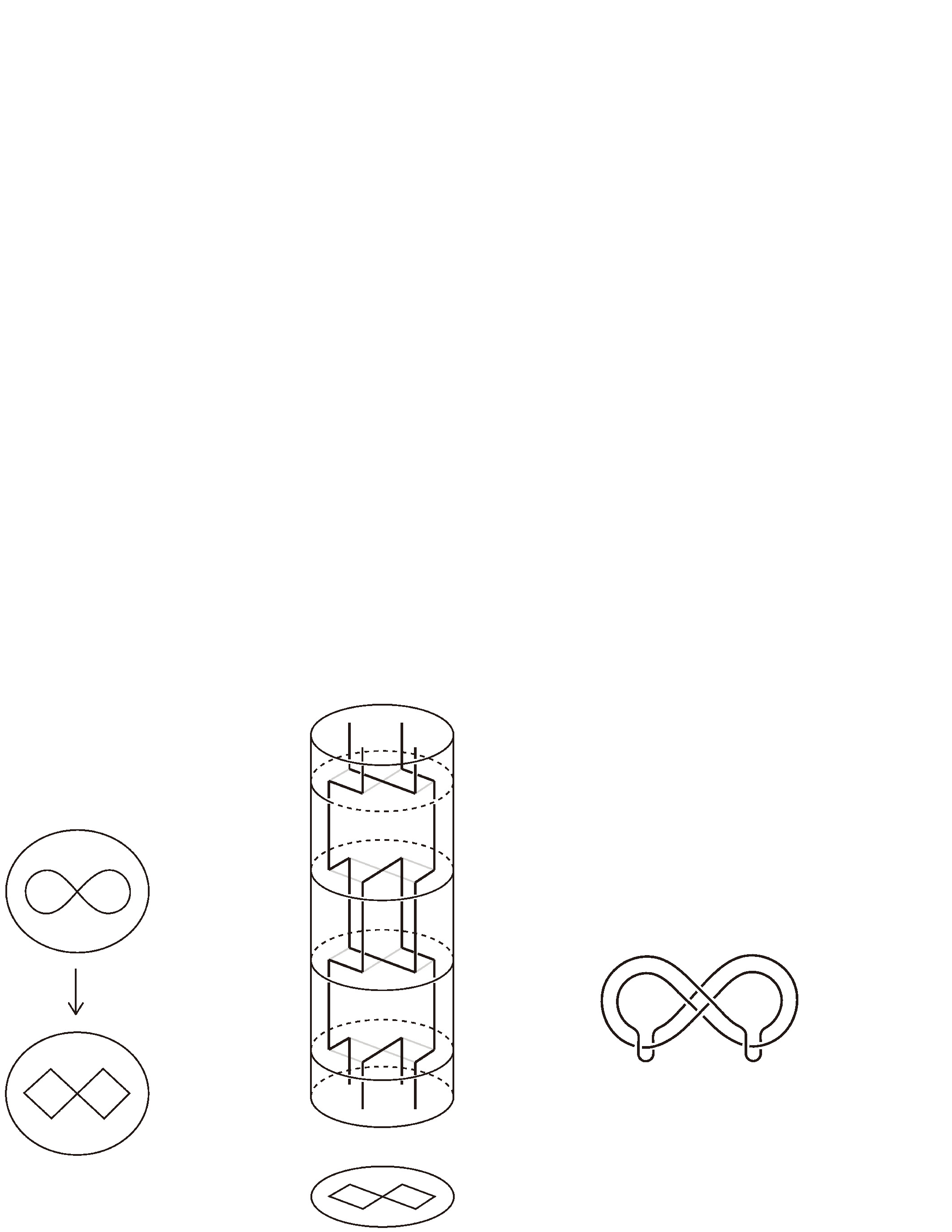}
\begin{picture}(400,0)(0,0)
\put(230,242){$\pi$}
\put(230,220){$\frac{3 \pi}{4}$}
\put(230,179){$\frac{\pi}{4}$}
\put(230,136){$\frac{- \pi}{4}$}
\put(230,97){$\frac{- 3 \pi}{4}$}
\put(230,70){$-\pi$}
\put(0,165){$P$}
\put(53,123){perturbing}
\put(332,75){$L_P$}
\end{picture}
\caption{(Left) A perturbation of $P$. (Middle) The preimage $h^{-1} (L_P)$. (Right) A diagram of $L_P$.}
\label{figure:divide_example}
\end{figure}
Define an equivalence relation $\sim$ on the cylinder $D \times [-\pi, \pi]$ by 
imposing $( x, \pi ) \sim (x, - \pi)$ (for $x \in D$) and $(x, \theta) \sim (x,0)$ 
 (for $x \in \partial D$, $\theta \in [ - \pi , \pi ]$). 
Note that the quotient space $D \times [- \pi , \pi] / \sim$ is homeomorphic to the 3-sphere. 
Indeed, we have a homeomorphism $h : (D \times [ - \pi , \pi] ) / \sim) \to \partial W$, $[(x , \theta)] \mapsto (x, \sqrt{1-|x|^{2}}e^{\sqrt{-1}\theta})$, 
where $\RR^4$ is identified with $\RR^2 \times \CC$, and thus, $\partial W$ is thought of as a subspace of $\RR^2 \times \CC$. 
The preimage $h^{-1} (L_P)$ of the link $L_P$ can be drawn on $D \times [ - \pi , \pi ] / \sim$ as shown on the middle in Figure \ref{figure:divide_example}. 
A diagram of $L_P$ is then obtained by projecting the preimage $h^{-1}(L_P)$ onto the disk $[D \times \{0 \}] \subset D \times [ - \pi , \pi ] / \sim$ 
after perturbing slightly $h^{-1}(L_P)$ so that the projection is generic. 
See the right in Figure \ref{figure:divide_example}.

\centering


\end{example}

\subsection{Shadows}
\label{subsec:shadow}

Let $X$ be a polyhedron, that is, the underlying space of a simplicial complex $K$. 
An (open) simplex $\sigma$ of $K$ is said to be \textit{free} if there exists a unique (open) simplex having $\sigma$ as its proper face. 
A point of a free simplex is called a \textit{boundary point} of the polyhedron $X$, and 
$\partial X$ denotes the set of boundary points of $X$. 
A simplex $\tau$ is said to be \textit{principal} if $\tau$ is not a proper face of any simplex of $K$. 
Let $\sigma$ be a free $k$-simplex of $K$ that is a proper face of  a principal $(k+1)$-simplex $\tau$. 
Then an operation to get $K - \{ \sigma, \tau \}$ form $K$ is called an \textit{elementary simplicial collapse}. 
Let $Y$ be a subpolyhedron of a polyhedron $X$. 
We say that $X$ \textit{collapses onto} $Y$, and denote $X \searrow Y$, if there exists a triangulation $(K, L)$ of the pair $(X,Y)$ such that 
 $L$ is obtained from $K$ by a finite sequence of elementary simplicial collapses.

A polyhedron $X$ is said to be \textit{simple} if each point $x$ in $X$ 
has a regular neighborhood homeomorphic to one of the four local models shown in Figure \ref{figure:simple_polyhedron}.
\begin{figure}[htbp]
\centering\includegraphics[width=10cm]{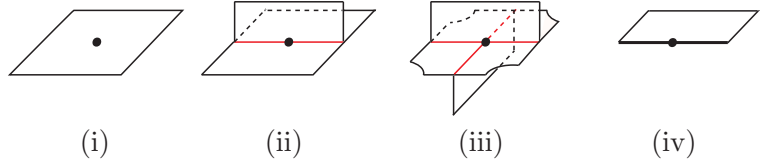}
\begin{picture}(400,0)(0,0)
\put(85,0){(i)}
\put(155,0){(ii)}
\put(228,0){(iii)}
\put(301,0){(iv)}
\end{picture}
\caption{Local models of a simple polyhedron.}
\label{figure:simple_polyhedron}
\end{figure}
A point of $X$ having (ii) or (iii) in Figure \ref{figure:simple_polyhedron} as its regular neighborhood is called a \textit{singular point} of $X$, 
and a point of $X$ having (iii) as its regular neighborhood is called a \textit{vertex} of $X$. 
A point having (iv) as its regular neighborhood is called a \textit{boundary point} of $X$.
Throughout the paper, 
$S(X)$ denotes the set of singular points of $X$. 
A connected component of $X - S(X)$ is called a \textit{region} of $X$.
A region that has no intersection with $\partial X$ is called an \textit{internal} region.

Let $X$ be a simple polyhedron. 
A map from the set of connected components of $\partial X$ to the binary set $\{e, f \}$ is called a \textit{coloring} of $\partial X$.
Given a coloring of $\partial X$, we denote by $\partial_e X$ and $\partial_f X$ the preimages of $e$ and $f$, respectively, of the coloring. 
A simple polyhedron $X$ with a fixed coloring of $\partial X$ is called a \textit{boundary decorated} simple polyhedron.

\begin{definition}
\label{def:shadow}
Let $M$ be a compact orientable $3$-manifold whose boundary consists of (possibly empty) tori.
Then, a \textit{shadow} of $M$ is defined to be a boundary decorated simple polyhedron $X$ 
properly embedded in a compact, orientable (smooth) $4$-manifold $W$ with boundary so that
\begin{itemize}
\item
X is \textit{locally flat} in $W$.  
That is, for any point $x \in X$, there exists a local coordinate neighborhood 
$(U, \varphi)$ of $x \in W$ such that $\varphi (U \cap X) \subset \{ 0 \} \times \RR^3_+ \subset \RR^4_+$, 
and $\varphi (U \cap X)$ is homeomorphic to one of the models shown in Figure \ref{figure:simple_polyhedron} 
within $\{ 0 \} \times \RR^3_+$.
Here, $\RR^{n}_{+} := \{(x_{1},x_{2}, \ldots ,x_{n}) \in \RR^{n} \mid x_{n} \geq 0\}~(n =3,4)$. 
\item
When $W$ is given a natural PL structure, $W$ collapses into $X$.
\item
$M = \partial W - \Int \Nbd(\partial_{e}X; \partial W)$. 
\end{itemize}
\end{definition}

Let $W$ be a $4$-manifold and let $X \subset W$ be a shadow of a $3$-manifold $M = \partial W$.
Each internal region $R$ of $X$ is assigned a half-integer $\gl(R)$ called \textit{gleam} as in the following way. 
First, we fix a Riemannian metric on $W$.  
Let $\iota : R \hookrightarrow W$ be the inclusion.  
Let $\bar{R}$ be the metric completion of $R$ with the path metric inherited from a Riemannian metric on $R$. 
For simplicity, we assume that the natural extension $\bar{\iota} : \bar{R} \to M$ is injective, thus, 
we identify $\bar{\iota} ( \bar{R} ) $ with $\bar {R}$. 
Then, for each $x \in \partial \bar{R}$, the direction orthogonal to the region $\bar{R}$ is determined on the regular neighborhood 
$\Nbd (x ; X) \subset \RR^3_+$ of $x$.
This defines a $[-1, 1]$ bundle on $\partial \bar{R}$ in $W$. 
Let $\bar{R}'$ be a compact surface in $W$ obtained by perturbing $\bar{R}$ so that $\partial \bar{R}'$ is contained in the 
$[-1, 1]$ bundle over $\partial \bar{R}$, and $\bar{R}'$ is in a general position with respect to $\bar{R}$. 
Then the intersection of $\bar{R}$ and $\bar{R}'$ consists of a finite number of isolated points, 
and the gleam of $R$ is defined to be the following sum: 
\[ \gl(R) := \frac{1}{2} |\partial \bar{R} \cap \partial \bar{R}'| + |\Int \bar{R} \cap \Int \bar{R}'| . \]

For a shadow $X \subset W$, we denote the projection obtained by restricting the collapse $W \searrow X$ to the boundary $\partial W$ by $\pi : \partial W \to X$.
A boundary-decorated simple polyhedron equipped with gleams of the internal regions is called a 
\textit{shadowed polyhedron}. 
In \cite{Tur94} Turaev introduced a method for reconstructing $W$ from a shadowed polyhedron $X$.
This is called \textit{Turaev's reconstruction}.
An outline is as follows. 
Let $R_1, R_2, \ldots, R_m$ be the connected components of $X - \Int (\Nbd (S(X) ; X))$. 
Then we have the decomposition $X = \Nbd (S(X) ; X) \cup ( \bigcup_{i=1}^m R_i)$.
Furthermore, we can decompose $\Nbd(S(X); X)$ into several pieces each homeomorphic to (ii) or (iii) in Figure \ref{figure:simple_polyhedron}. 
By gluing the $3$-thickenings of these small pieces according to the combination structure of $X$, 
we obtain a $3$-thickening $M_{S(X)}$ of $\Nbd(S(X) ; X)$.
The $4$-thickening $W_{S(X)}$ is then obtained by taking the orientable $[-1, 1]$ bundle over $M_{S(X)}$.
For each $R_i$, take the orientable $[-1, 1]$ bundle $W_{R_i}$ on $R_i \times [-1, 1]$.
The 4-manifold $W$ is finally constructed by gluing these $W_{R_i}$'s 
to $W_{S(X)}$ using a gluing map specified by their gleams. 
Divides, reviewed in the previous subsection, and shadows, reviewed here, are strongly related. 
In fact, Ishikawa and Naoe explained in \cite{IN20} a method for, given a divide $P$, constructing a 
shadowed polyhedron of the exterior $E(L_P)$ of the link $L_P$ of a divide $P$. 

\vspace{1em}

Let $X \subset W$ be a shadow of a $3$-manifold $M$. 
Let $A_1$ be a piece of $X$ homeomorphic to the model  (iii) in Figure \ref{figure:simple_polyhedron}. 
Let $A_2$ be a piece of $X$ having a bigon with the gleam $+1$ as shown in Figure \ref{figure:piece_A2}.
\begin{figure}[htbp]
\centering\includegraphics[width=8cm]{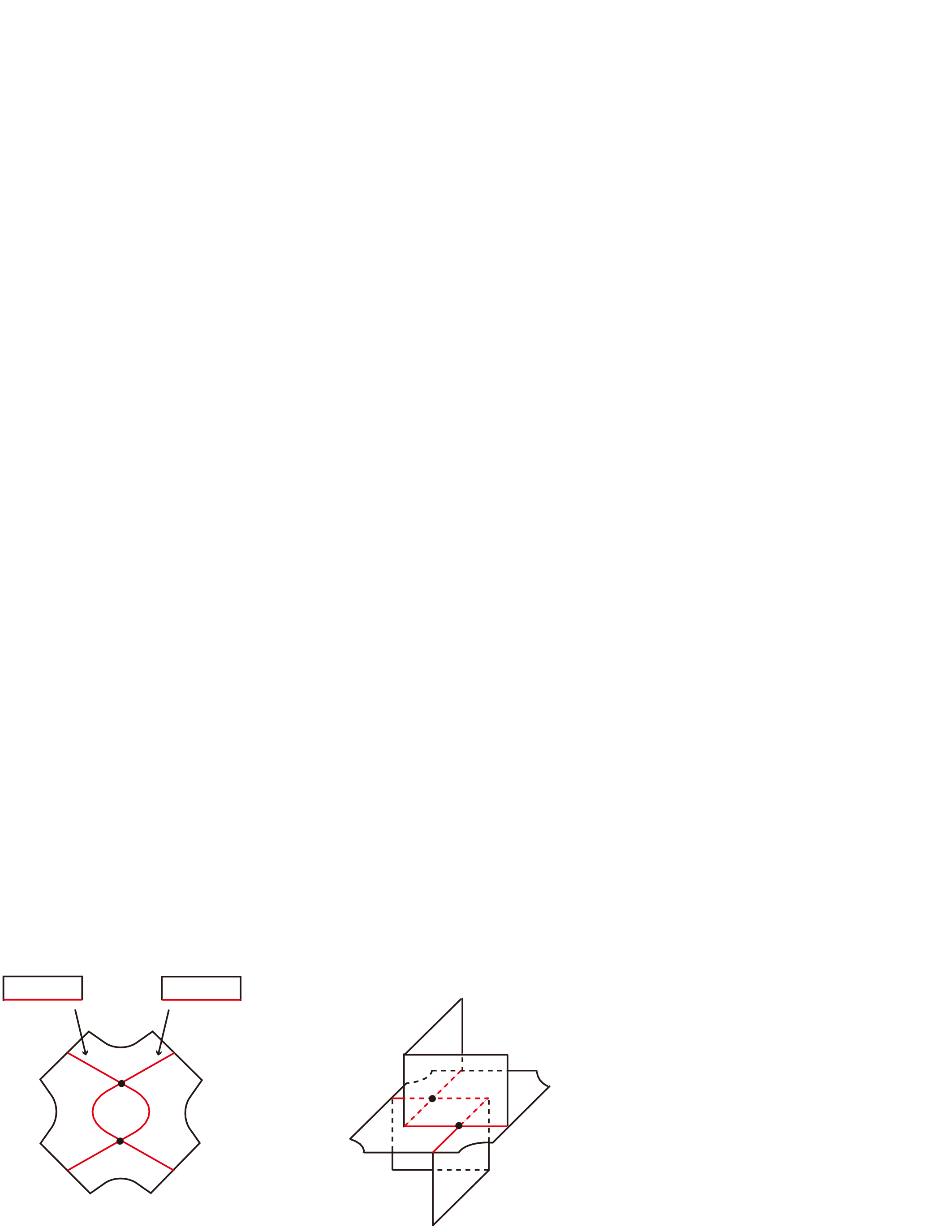}
\begin{picture}(400,0)(0,0)
\put(128,59){$+1$}
\put(263,58){$+1$}
\put(66,95){attaching}
\put(159,95){attaching}
\end{picture}
\caption{A piece $A_2$ of a shadow.}
\label{figure:piece_A2}
\end{figure}
Costantino and Thurston showed the following theorem by specifying the hyperbolic structure on the interior of 
the preimages $\pi^{-1} (A_1)$ and $\pi^{-1} (A_2)$. 

\begin{theorem}[Costantino-Thurston \cite{CT08}]
\label{prop:2v8+10v4}
Let $X$ be a simple polyhedron embedded in a compact $4$-manifold $W$ as a shadow of $M = \partial W$.
Let $X' \subset X$ be a connected simple polyhedron that can be decomposed into pieces each homeomorphic to $A_1$ or $A_2$.
Let $n_1$ and $n_2$ be the numbers of pieces homeomorphic to $A_1$ and $A_2$, respectively, in this decomposition.
Then $\Int  ( \pi^{-1} (X') )$ is a hyperbolic $3$-manifold whose hyperbolic volume is $2 n_1 v_{\mathrm{oct}} + 10 n_2 v_{\mathrm{tet}}$. 
Here, $v_{\mathrm{oct}}$ and $v_{\mathrm{tet}}$ are the volumes of the ideal hyperbolic regular  octahedron and the ideal hyperbolic regular tetrahedron, respectively.
\end{theorem}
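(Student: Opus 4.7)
The plan is to construct an explicit ideal hyperbolic polyhedral decomposition of $\Int (\pi^{-1} (X'))$ by handling each piece $A_1$ and $A_2$ locally, and then to check that the local ideal polyhedra assemble into a complete finite-volume hyperbolic $3$-manifold. The volume formula will then follow by additivity.

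First I would analyze the vertex piece $A_1$. Using Turaev's reconstruction, the preimage under $\pi$ of a small regular neighborhood of a true vertex, after removing the appropriate cusp neighborhoods arising from the toroidal boundary over $\partial_e X$ and from the adjacent internal regions, can be identified with the double of a truncated regular ideal octahedron glued along its truncation squares. The six branches meeting at a true vertex correspond bijectively to the six ideal vertices of the octahedron, and the non-truncation faces are ideal triangles lying over the singular edges of $X'$, each carrying dihedral angle $\pi / 2$. This yields the $2 v_{\mathrm{oct}}$ contribution per $A_1$.

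Next I would treat the bigon piece $A_2$ with gleam $+1$. Turaev's reconstruction presents $\pi^{-1} (A_2)$ as a solid-torus-like region whose framing is dictated by the gleam, and I would exhibit an explicit subdivision into $10$ truncated regular ideal tetrahedra. The gleam $+1$ is precisely the integer shift needed for the gluing around the bigon to close up into such an ideal triangulation; the outer ideal-triangle faces again lie over the singular edges of $X'$ at the two attaching arcs of the bigon and carry dihedral angle $\pi / 3$. This gives the $10 v_{\mathrm{tet}}$ contribution per $A_2$.

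Finally I would perform the global assembly. Two adjacent pieces share an annular region over a common singular edge of $X'$, tiled by ideal triangles from the two sides, and these triangles are matched by isometries since all regular ideal triangles are congruent. Hyperbolicity and completeness then reduce to two checks: that the dihedral angles around every interior edge of the cell decomposition sum to exactly $2 \pi$, and that the holonomy around every toroidal cusp is parabolic. The first follows combinatorially from how $\pi / 2$ (octahedral) and $\pi / 3$ (tetrahedral) faces can meet around an edge in a simple polyhedron; the second uses the gleam $+ 1$ condition on the $A_2$ bigons to cancel the rotational part of the holonomy around the corresponding cusp. The main obstacle will be precisely this second bookkeeping step: one must verify that the symmetry of the regular octahedra and tetrahedra, together with the $+1$ framings, forces the cusp holonomy to be unipotent at every cusp simultaneously, rather than imposing further nontrivial constraints on the combinatorics of $X'$.
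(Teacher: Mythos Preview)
Your overall strategy---build $\pi^{-1}$ of each piece from regular ideal polyhedra, then glue along the boundary holed spheres and check that dihedral angles sum to $2\pi$---is exactly the paper's. But two points are off.

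First, the description of the $A_1$ block is incorrect. You say $\pi^{-1}(A_1)$ is ``the double of a truncated regular ideal octahedron glued along its truncation squares.'' Doubling along the truncation squares would close up the cusps, whereas the cusp faces must remain on the boundary. In the paper's construction the $3$-thickening $M_{C_1}$ of the local model is itself a truncated ideal regular octahedron; its six truncation squares are the cusp faces, four of its eight ideal-triangle faces are \emph{mirror faces}, and the remaining four are \emph{gluing faces}. Turaev's reconstruction then realizes $\pi^{-1}(A_1)$ as two copies of $M_{C_1}$ glued along the mirror faces (not the truncation squares), giving a genus-$3$ handlebody whose boundary carries four totally geodesic three-holed spheres, each assembled from two gluing triangles.

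Second, your ``main obstacle'' is misplaced. You worry about verifying parabolic cusp holonomy and suggest the gleam $+1$ is what cancels the rotational part. In fact, once one has glued \emph{regular} ideal polyhedra along full faces by isometries with dihedral-angle sum $2\pi$ at every interior edge, completeness is automatic: the links of the ideal vertices are Euclidean unit squares (from the octahedra) and equilateral triangles (from the tetrahedra) tiling each boundary torus, so each cusp is already a standard complete Euclidean cusp. The gleam $+1$ enters earlier, in producing the explicit ten-tetrahedron decomposition of $\pi^{-1}(A_2)$ via Turaev's reconstruction; it plays no separate role in a holonomy check. The honest work you are glossing over is the $2\pi$ edge condition itself, in particular along a three-holed sphere where an $A_1$ piece meets an $A_2$ piece: one must verify that on each side the polyhedra contribute total dihedral angle exactly $\pi$ at every interior edge of that sphere, so that the two sides together give $2\pi$ regardless of which types are being glued.
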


Since the proof of Theorem \ref{prop:2v8+10v4} is closely related to the argument of our main theorem, we briefly review the outline here. 
By the assumption, $X'$ is decomposed into pieces each homeomorphic to $A_1$ or $A_2$.  
Let $C_1 \subset X'$ be a piece homeomorphic to $A_1$. 
Take the $3$-thickening $M_{C_{1}}$ of the local model $C_1$ as shown in Figure \ref{figure:face_shadow}. 
\begin{figure}[htbp]
\centering\includegraphics[width=8cm]{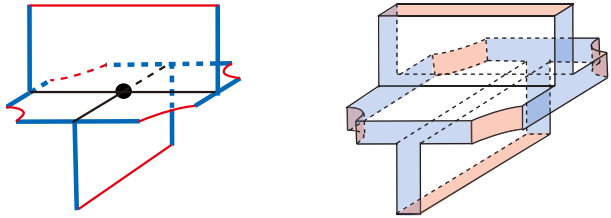}
\caption{A piece $C_{1}$ and its $3$-thickening $M_{C_{1}}$.}
\label{figure:face_shadow}
\end{figure}
Here, the blue part of the boundary $\partial C_{1}$ of $C_{1}$ in the figure 
indicates the part that is to be attached to other pieces 
of the decomposition of $X'$, while the red part indicates the rest. 
The blue and red faces of the boundary of $M_{C_{1}}$ indicate the parts corresponding to 
the blue and red parts of the boundary $\partial C_{1}$, respectively. 
We call the closure of a connected component of $\partial M_{C_{1}}$ minus the union of those parts a \emph{mirror face}. 
Due to Turaev's reconstruction, the preimage $\pi^{-1}(C_{1})$ of $C_{1}$ is obtained by gluing two copies of $M_{C_{1}}$ 
along the corresponding mirror faces. See Figure \ref{figure:two_octahedra_for_A1}.
Here, we can equip each of the two copies of $M_{C_{1}}$ with a hyperbolic structure 
by regarding the $3$-thickening $M_{C_{1}}$ as a truncated ideal hyperbolic regular octahedron.
In this way, the preimage $\pi^{-1}(C_1)$ can be thought of as a genus-3 handlebody obtained by gluing two
truncated ideal regular hyperbolic octahedra. 
\begin{figure}[htbp]
\centering\includegraphics[width=8cm]{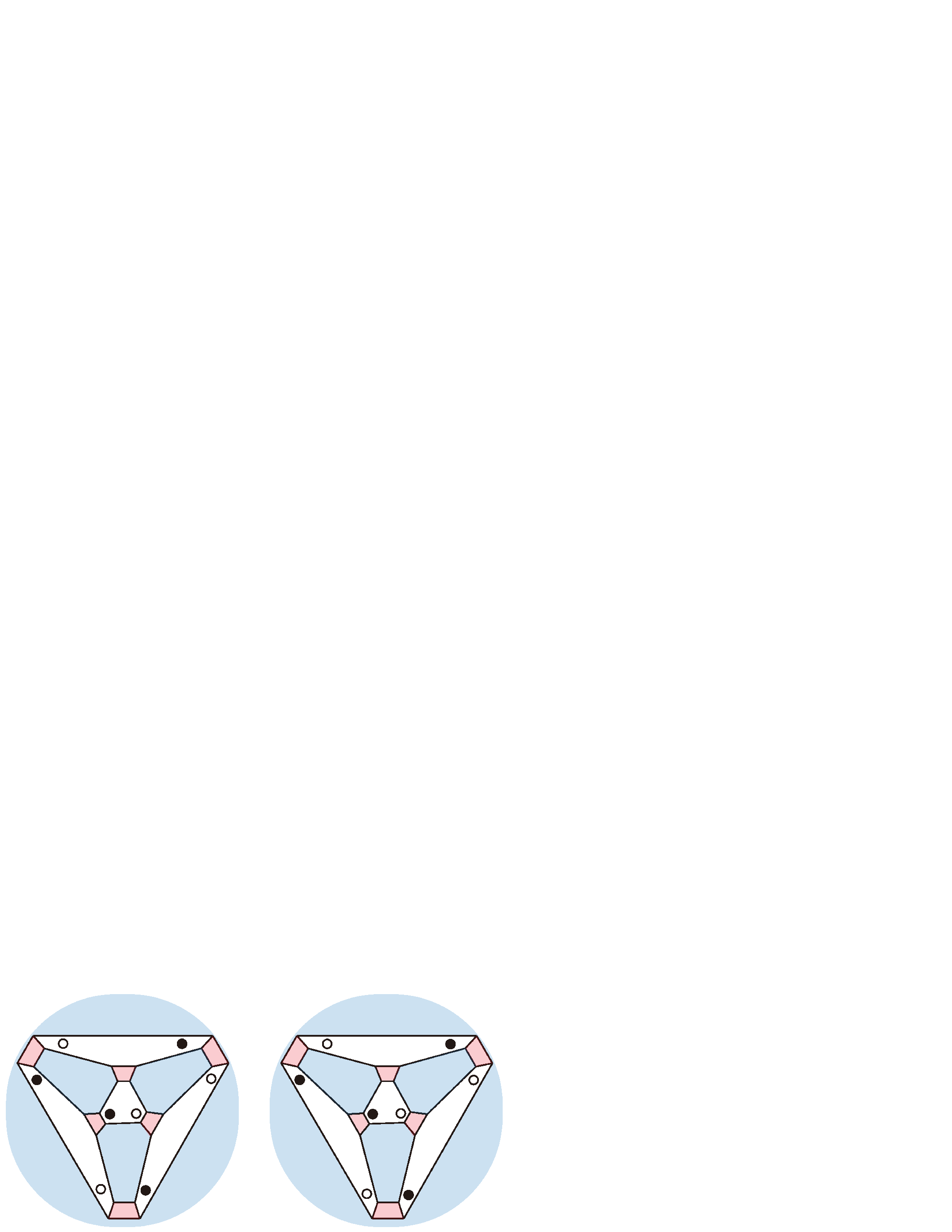}
\begin{picture}(400,0)(0,0)
\put(139,91){$1$}
\put(116,55){$2$}
\put(160,55){$3$}
\put(138,67){$4$}

\put(258,91){$1$}
\put(235,55){$2$}
\put(279,55){$3$}
\put(257,67){$4$}

\put(138,42){\color{blue}$1$}
\put(160,77){\color{blue}$2$}
\put(120,77){\color{blue}$3$}
\put(100,42){\color{blue}$4$}

\put(258,42){\color{blue}$5$}
\put(278,77){\color{blue}$6$}
\put(238,77){\color{blue}7$$}
\put(222,42){\color{blue}$8$}
\end{picture}
\caption{A polyhedral decomposition of $\pi^{-1} (C_1)$.}
\label{figure:two_octahedra_for_A1}
\end{figure}
Similarly, let $C_2 \subset X'$ be a piece homeomorphic to $A_2$. 
Then, the preimage $\pi^{-1} (C_2)$ can also be regarded as a genus-3 handlebody obtained by gluing ten truncated ideal regular hyperbolic tetrahedra 
as shown in Figure \ref{figure:ten_tetrahedra_for_A2} (see also Furutani-Koda \cite{FK21}).  
\begin{figure}[htbp]
\centering\includegraphics[width=14cm]{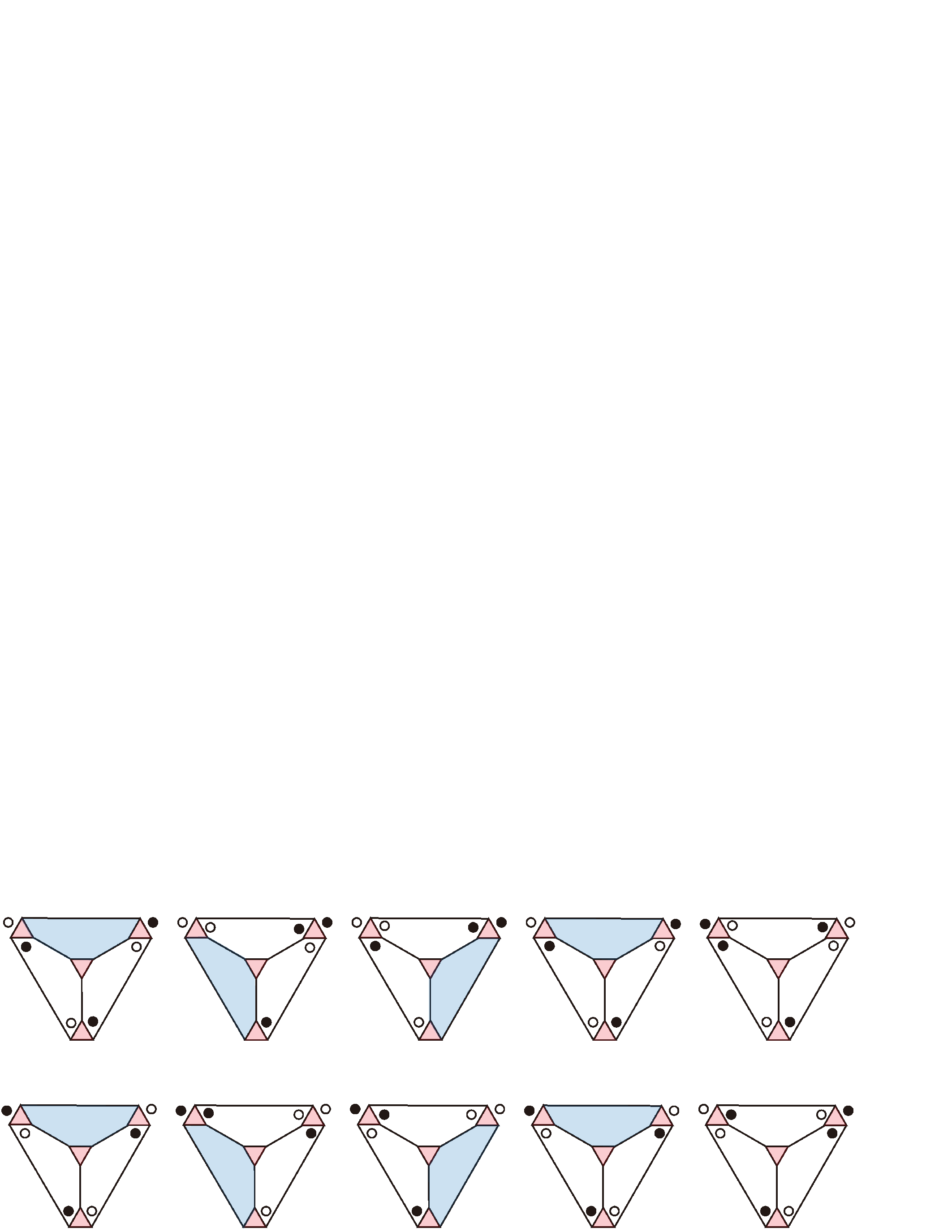}
\begin{picture}(400,0)(0,0)
\put(25, 125){$1$}
\put(128,39){$1$}
\put(46,125){$2$}
\put(186,38){$2$}
\put(5,115){$3$}
\put(361,147){$3$}
\put(116,147){$4$}
\put(290,38){$4$}
\put(128,125){$5$}
\put(25,38){$5$}
\put(87,115){$6$}
\put(350,125){$6$}
\put(198,147){$7$}
\put(268,38){$7$}
\put(186,125){$8$}
\put(46,38){$8$}
\put(169,115){$9$}
\put(371,125){$9$}
\put(265,125){$10$}
\put(194,58){$10$}
\put(288,125){$11$}
\put(113,58){$11$}
\put(249,115){$12$}
\put(331,115){$12$}
\put(358,58){$13$}
\put(2,28){$13$}
\put(346,38){$14$}
\put(84,28){$14$}
\put(368,38){$15$}
\put(166,28){$15$}
\put(248,28){$16$}
\put(330,28){$16$}

\put(35,147){\color{blue}$1$}
\put(105,125){\color{blue}$2$}
\put(209,125){\color{blue}$6$}
\put(280,147){\color{blue}$5$}
\put(34,58){\color{blue}$3$}
\put(105,38){\color{blue}$4$}
\put(208,38){\color{blue}$8$}
\put(280,58){\color{blue}$7$}
\end{picture}
\caption{A polyhedral decomposition of $\pi^{-1} (C_2)$.}
\label{figure:ten_tetrahedra_for_A2}
\end{figure}
In those figures, each white face is glued to another white face numbered the same so that 
the edges assigned $\circ$ and $\bullet$ match.  

Now, each of the genus-$3$ handlebodies $\pi^{-1} (C_1)$ and $\pi^{-1} (C_2)$  here 
has four three-holed spheres on its boundary, which are as shown in Figure \ref{figure:holed_spheres_for_A1}. 
\begin{figure}[htbp]
\centering\includegraphics[width=11cm]{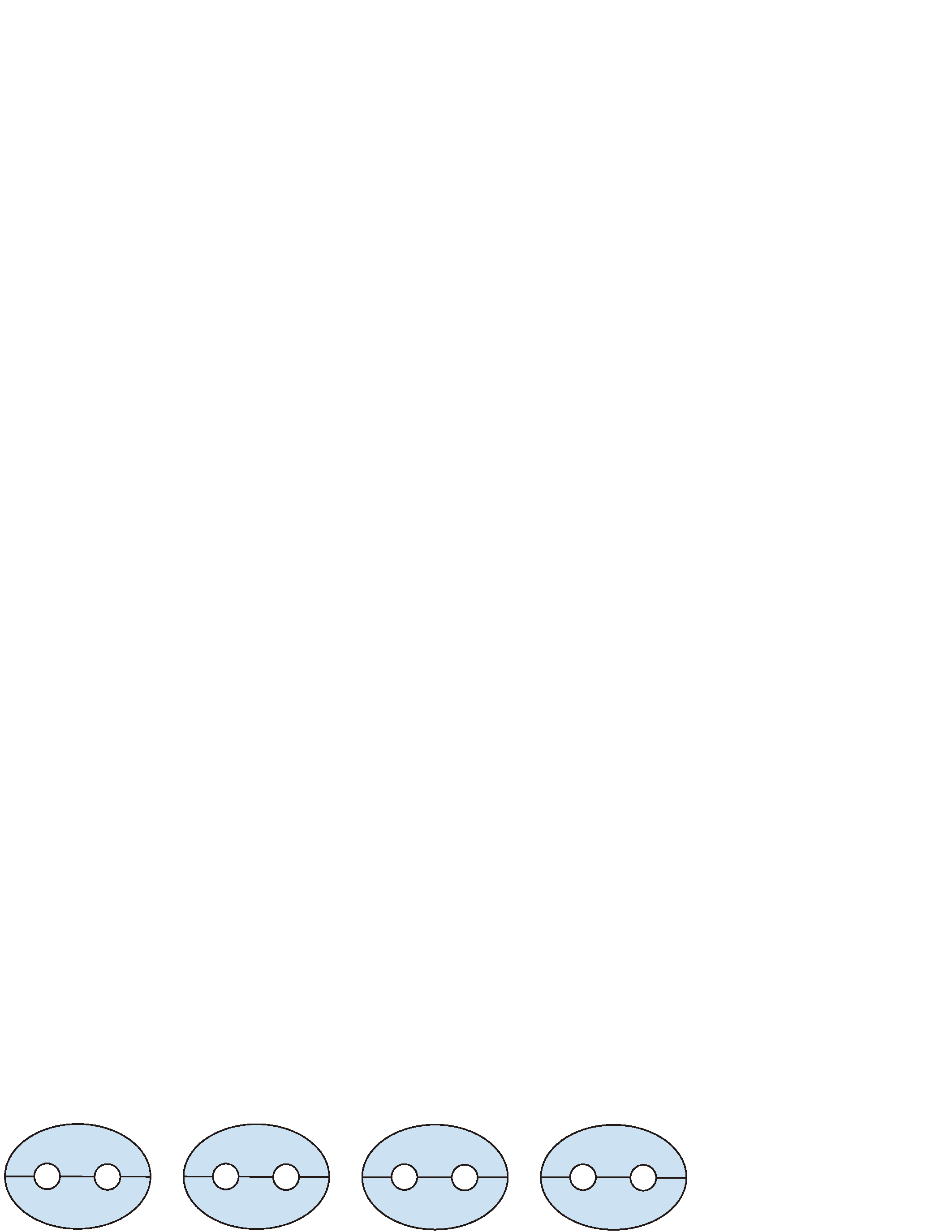}
\begin{picture}(400,0)(0,0)

\put(74,48){\color{blue}$1$}
\put(156,48){\color{blue}$2$}
\put(238,48){\color{blue}$3$}
\put(320,48){\color{blue}$4$}

\put(74,21){\color{blue}$5$}
\put(156,21){\color{blue}$6$}
\put(238,21){\color{blue}$7$}
\put(320,21){\color{blue}$8$}
\end{picture}
\caption{Four three-holes spheres on the boundary of $\pi^{-1} (C_i)$.}
\label{figure:holed_spheres_for_A1}
\end{figure}
By gluing these three-holed spheres according to the combinatorial structure of $X'$, 
we obtain a decomposition of $\pi^{-1} (X')$ into truncated polyhedra, which induces 
a ideal hyperbolic polyhedral decomposition of $\Int  ( \pi^{-1} (X') )$ consisting of 
$2 n_1$ ideal regular hyperbolic octahedra and $10n_2$ ideal regular hyperbolic tetrahedra.

\begin{example}[Furutani-Koda \cite{FK21}]
\label{ex:link_10v4} 
Let $L$ be the link in $S^{3}$ shown on the left in Figure \ref{figure:shadow_of_L}.
\begin{figure}[htbp]
\centering\includegraphics[width=12cm]{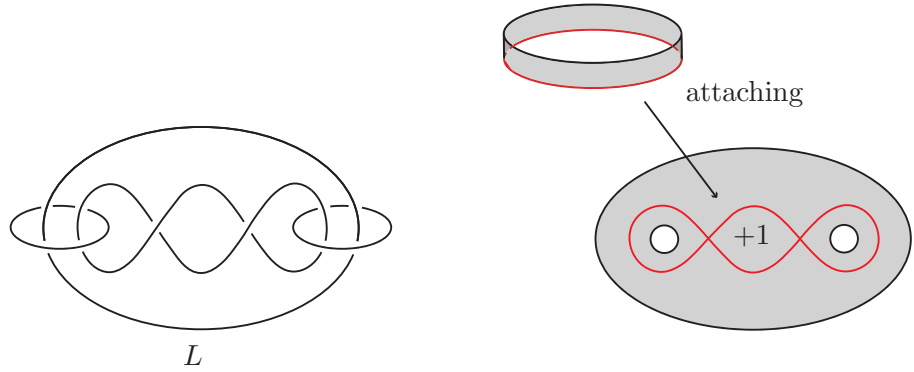}
\begin{picture}(400,0)(0,0)
\put(95,0){$L$}
\put(285,100){attaching}
\put(303,46){$+1$}
\end{picture}
\caption{The link $L$ and a shadow of $E(L)$.}
\label{figure:shadow_of_L}
\end{figure}
The exterior of $L$ has the shadowed polyhedron shown on the right in Figure \ref{figure:shadow_of_L} as its shadow.
This polyhedron is obtained by identifying the \textit{tripods}, i.e. the cones on three points, 
in the boundary of $A_2$ each other as shown on the right in Figure \ref{figure:cutting_along_pants_L}.
\begin{figure}[htbp]
\centering\includegraphics[width=12cm]{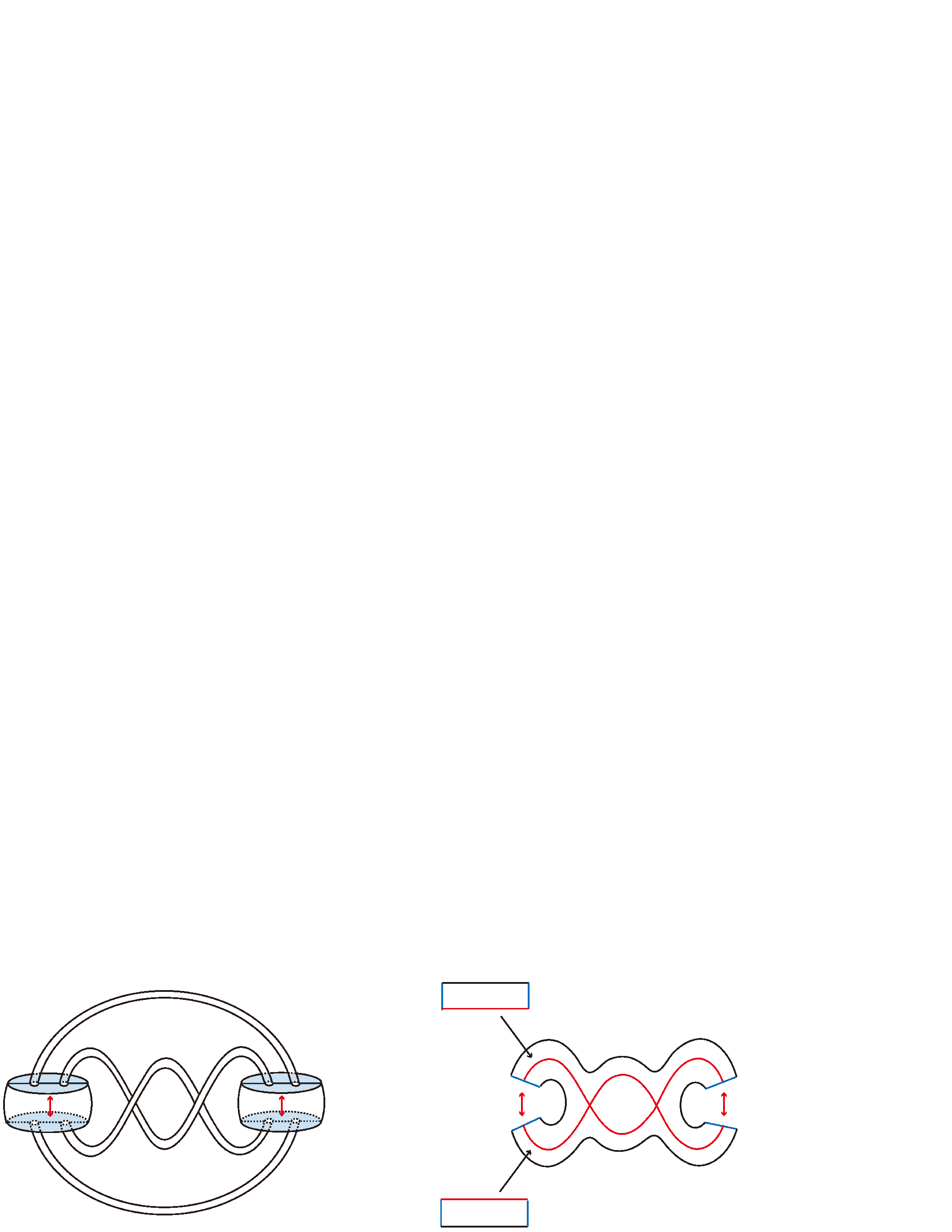}
\begin{picture}(400,0)(0,0)
\put(217,98){attaching}
\put(214,35){attaching}
\put(310,68){$+1$}
\end{picture}
\caption{From $A_2$ to a shadow of $E(L)$.}
\label{figure:cutting_along_pants_L}
\end{figure}
From Turaev's reconstruction, the exterior of $L$ is obtained by identifying 
the totally-geodesic $3$-holed spheres on the boundary of the genus-$3$ handlebody $\pi^{-1} (A_2)$, 
see the left in Figure \ref{figure:cutting_along_pants_L}. 
Thus, the exterior $E(L)$ of $L$ is decomposed into ten truncated regular tetrahedra as shown in Figure \ref{figure:ten_tetrahedra_for_L}. 
\begin{figure}[htbp]
\centering\includegraphics[width=14cm]{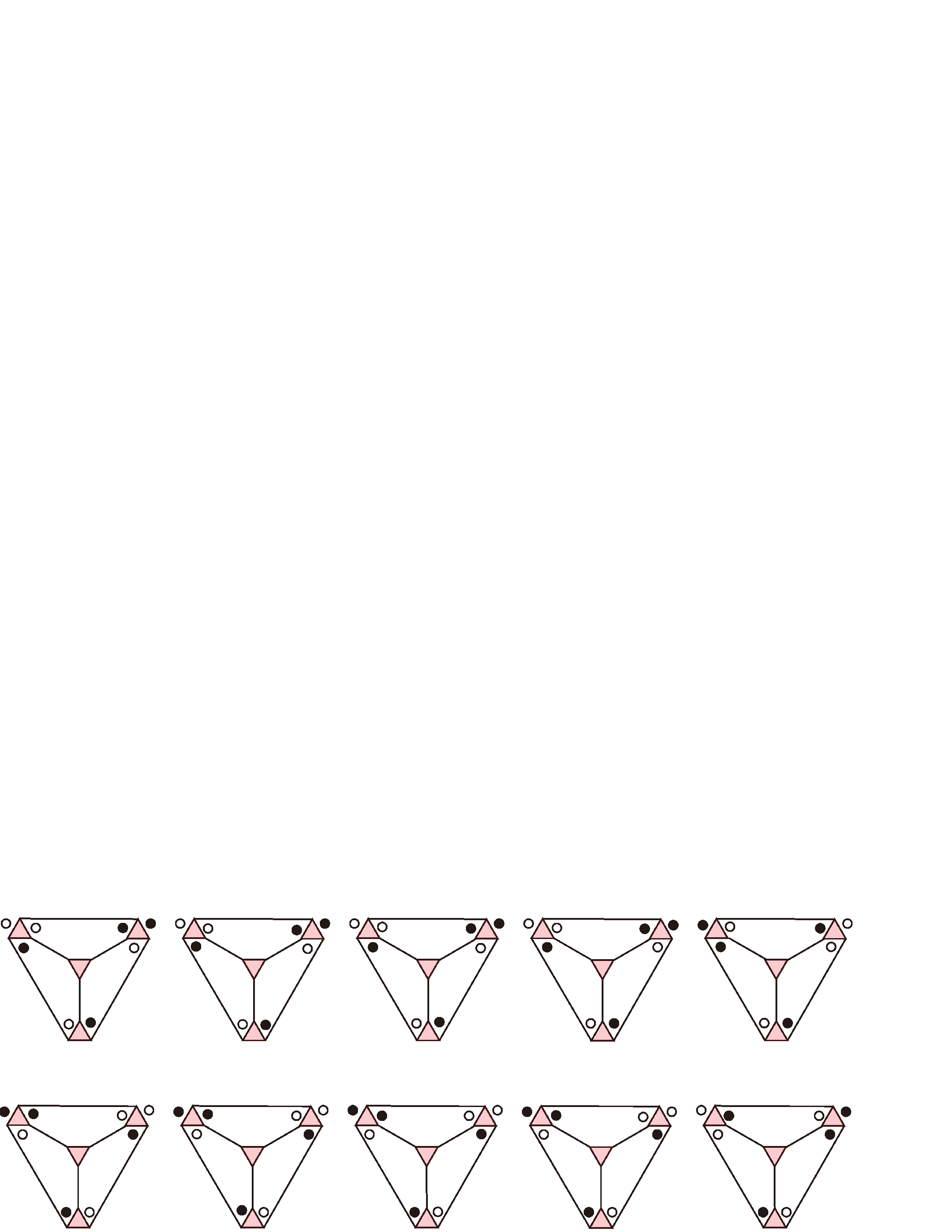}
\begin{picture}(400,0)(0,0)
\put(35,147){$1$}
\put(361,147){$4$}
\put(117,147){$5$}
\put(198,147){$9$}
\put(276,147){$13$}

\put(24, 125){$2$}
\put(47,125){$3$}
\put(103,125){$6$}
\put(128,125){$7$}
\put(184,125){$10$}
\put(207,125){$11$}
\put(264,125){$14$}
\put(287,125){$15$}
\put(350,125){$8$}
\put(369,125){$12$}

\put(5,115){$4$}
\put(87,115){$8$}
\put(166,115){$12$}
\put(249,115){$16$}
\put(331,115){$16$}

\put(32,58){$11$}
\put(113,58){$15$}
\put(194,58){$14$}
\put(276,58){$6$}
\put(358,58){$17$}

\put(24,38){$7$}
\put(43,38){$10$}
\put(99,38){$13$}
\put(128,38){$2$}
\put(186,38){$3$}
\put(208,38){$1$}
\put(267,38){$9$}
\put(291,38){$5$}
\put(345,38){$18$}
\put(368,38){$19$}

\put(2,28){$17$}
\put(84,28){$18$}
\put(166,28){$19$}
\put(248,28){$20$}
\put(330,28){$20$}

\end{picture}
\caption{A decomposition of $E(L)$ into ten truncated regular tetrahedra.}
\label{figure:ten_tetrahedra_for_L}
\end{figure}
Since exactly six faces meet at each edge in this decomposition, this induces a decomposition of 
$\Int (\pi^{-1} (A_2))$ into ten ideal hyperbolic regular tetrahedra by assigning $\pi/3$ to each dihedral angle of the 
tetrahedra. 
Thus, we see that $L$ is a hyperbolic link whose hyperbolic volume is $10v_{\mathrm{tet}}$.
\end{example}

\section{The complements of links associated to divides}

Let $D \subset \RR^{2}$ be a $2$-disk. 
Set $W := \{(x, u) \in TD \mid x \in D, ~u \in T_{x}D, ~|x| ^{2} + |u|^{2} \leq 1\}$ as in the previous section. 
Let $P$ be a connected divide.

\begin{definition}\label{defi:xp}
Define the polyhedron $\widehat{X_{P}} \subset W$ to be the union of the disk $D \subset W$ and $\{(x,u) \in W \mid x \in P,~ u \in T_{x}P\}$. 
Besides, the polyhedron $X_{P}$ in $W$ is defined as $\widehat{X_{P}}$ minus the union of all external regions of the divide $P$. 
A point of $\widehat{X_{P}}$ or $X_{P}$ corresponding to a double point of $P$ is called a \textit{vertex}.
A connected component of $X_{P} - P$ is called a \textit{region} of $X_{P}$. 
\end{definition}

Remark that $ X_{P}$ is properly embedded in $W$, and $\partial X_P$ is nothing but the link $L_P$ of a divide.  
Furthermore, $W$ collapses onto $X_{P}$. 
The projection obtained by restricting this collapse $W \searrow X_P$ to the boundary $\partial W$ is denoted by $\pi : \partial W \to X_{P}$. 
We define the links $L_{X_P}$ and $L_{\widehat{X_P}}$ associated to a divide $P$ as follows.  
The polyhedra $\widehat{X_P}$, $X_P$, and $\Nbd(P; X_P)$ are as in Figure \ref{figure:hatXP_XP_Nbd}. 
\begin{figure}[htbp]
\centering\includegraphics[width=13cm]{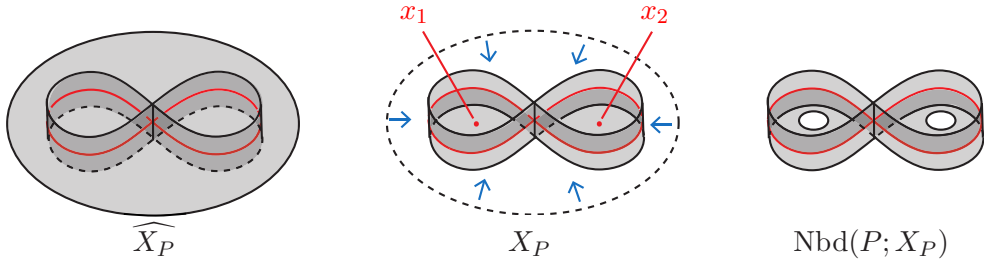}
\begin{picture}(400,0)(0,0)
\put(63,0){$\widehat{X_P}$}
\put(205,0){$X_P$}
\put(313,0){$\Nbd (P; X_P)$}

\put(164,88){\color{red} $x_1$}
\put(255,88){\color{red} $x_2$}
\end{picture}
\caption{The polyhedra $\widehat{X_P}$, $X_P$ and $\Nbd (P; X_P) $.}
\label{figure:hatXP_XP_Nbd}
\end{figure}
Let $R_1, \ldots, R_n$ be the closures of components of $X_P - \Nbd (P; X_P)$ contained in internal regions of $P$. 
Choose points $x_1 , \dots x_n$ in $R_1, \ldots, R_n$, respectively, and 
set $l_{i} := \pi^{-1}(x_{i})$ ($i=1, \ldots, n$), which is a simple closed curve in $\partial W$. 
Then, the link $L_{X_P}$ is defined to be the union of $\partial X_P$, which is  exactly the link $L_P$ of the divide $P$, and the simple closed curves 
$l_1 , \ldots , l_n$.
Similarly, the link $L_{\widehat{X_P}}$ is defined to be the union of $\partial \widehat{X_P}$ and $l_1 , \ldots , l_n$.
Thus, by definition, we have obvious inclusions $L_P \subset L_{X_P} \subset L_{\widehat{X_P}}$. 
Throughout the paper, we set $M_P := \pi^{-1}(\Nbd(P ; X_P))$. 
The exterior $E(L_P)$ of the link $L_P$ can then be decomposed as 
\[
E(L_{P}) = M_P  \cup \left( \bigcup_{i=1}^n \pi^{-1}(R_i) \right) .
\]
Thus, by identifying each $\pi^{-1}(R_i)$ with a regular neighborhood of $l_i$ in $\partial W$, 
we can think of $E(L_{X_P})$ as $M_P$. 
Figure \ref{figure:divide_l1_l2} shows the links $L_P$, $L_{X_P}$ and $L_{\widehat{X_P}}$ for the divide in Example \ref{ex:divide}. 
\begin{figure}[htbp]
\centering\includegraphics[width=14cm]{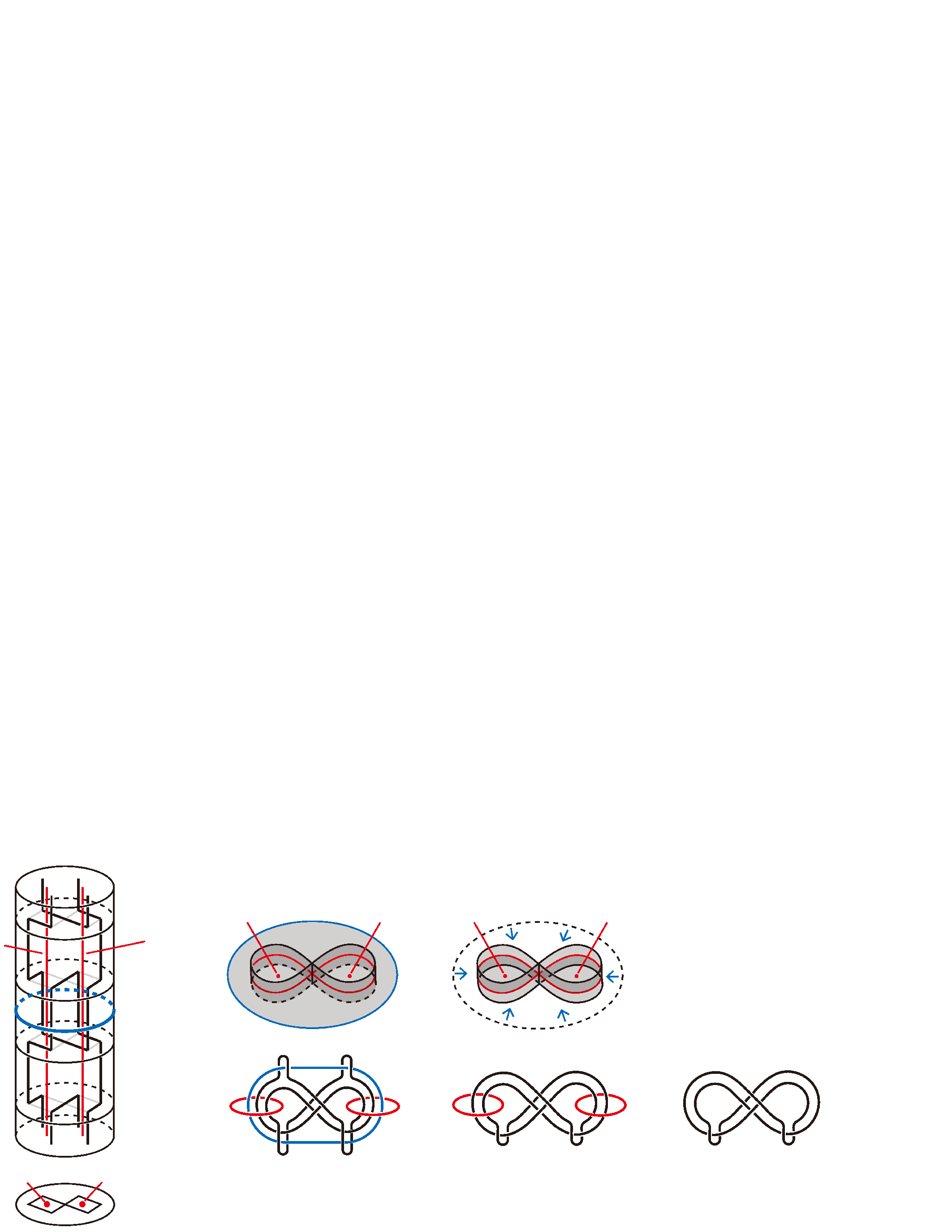}
\begin{picture}(400,0)(0,0)
\put(57,176){$\pi$}
\put(57,162){$\frac{3 \pi}{4}$}
\put(57,133){$\frac{\pi}{4}$}
\put(57,100){$\frac{- \pi}{4}$}
\put(57,70){$\frac{- 3 \pi}{4}$}
\put(57,53){$-\pi$}

\put(5,37){\color{red} $x_1$}
\put(46,37){\color{red} $x_2$}

\put(-8,148){\color{red} $l_1$}
\put(75,150){\color{red} $l_2$}

\put(115,165){\color{red} $x_1$}
\put(182,165){\color{red} $x_2$}

\put(225,165){\color{red} $x_1$}
\put(292,165){\color{red} $x_2$}

\put(143,170){$\widehat{X_P}$}
\put(255,170){$X_P$}

\put(140,30){$L_{\widehat{X_P}}$}
\put(252,30){$L_{X_P}$}
\put(357,30){$L_{P}$}
\put(200,30){$\supset$}
\put(310,30){$\supset$}
\end{picture}
\caption{The links $L_P$, $L_{X_P}$ and $L_{\widehat{X_P}}$.}
\label{figure:divide_l1_l2}
\end{figure}

A regular neighborhood of a vertex of $X_{P}$ 
is that of a vertex of $\widehat{X_{P}}$ minus the external regions.
Thus, it is homeomorphic to one of the six types, 
Types $1, 2,\ldots, 6$, of local models shown in Figure \ref{figure:local_models_of_XP_gen}, 
depending on the configuration of the excluded region, where 
the local coordinates around the vertices are $(x_1, x_2, u_1, u_2)$. 
\begin{figure}[htbp]
\centering\includegraphics[width=14.5cm]{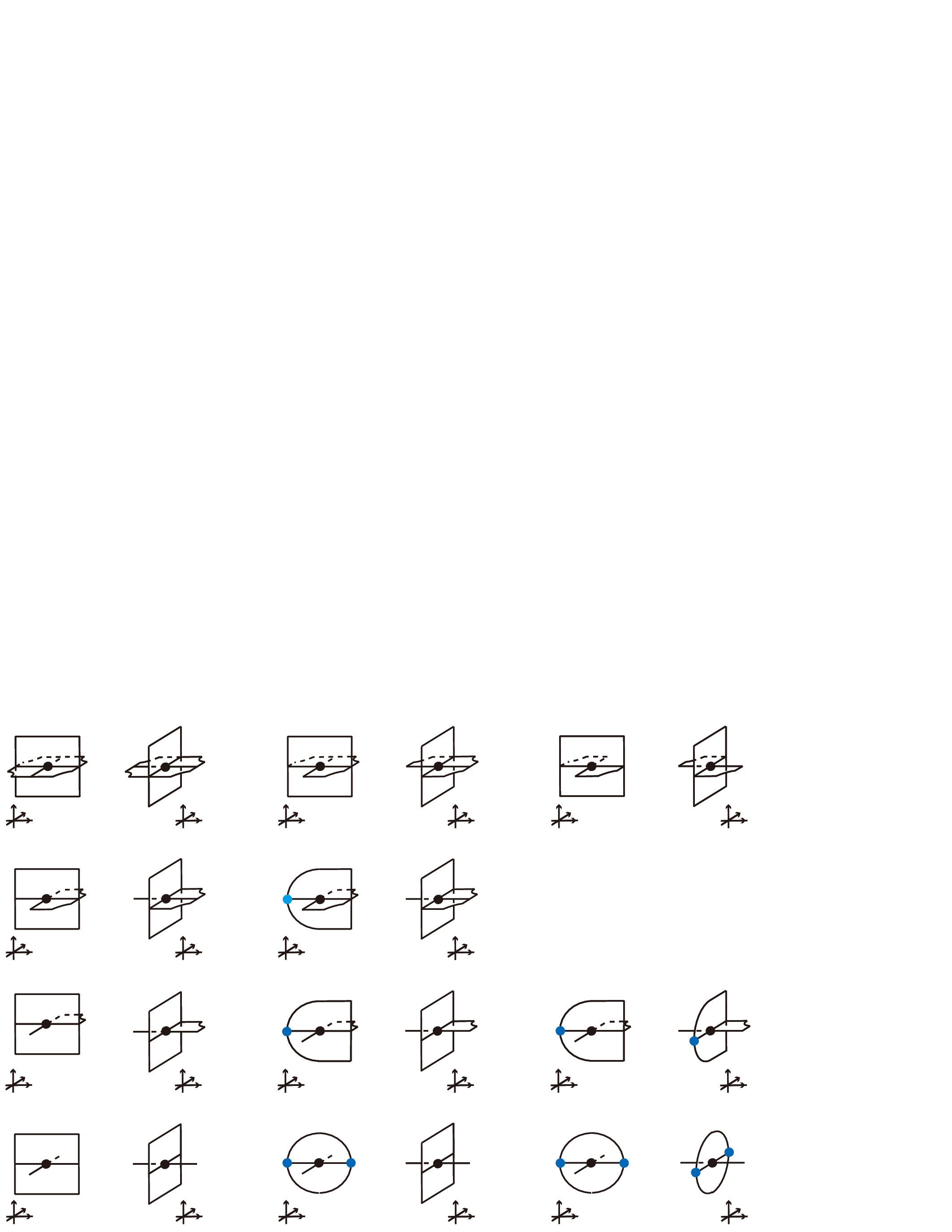}
\begin{picture}(400,0)(0,0)
\put(45,252){$\cup$}
\put(45,179){$\cup$}
\put(45,106){$\cup$}
\put(45,33){$\cup$}

\put(198,252){$\cup$}
\put(198,179){$\cup$}
\put(198,106){$\cup$}
\put(198,33){$\cup$}

\put(348,252){$\cup$}
\put(348,106){$\cup$}
\put(348,33){$\cup$}

\put(10,234){\footnotesize $x_1$}
\put(7,242){\footnotesize $x_2$}
\put(-12,242){\footnotesize $u_1$}
\put(104,234){\footnotesize $x_1$}
\put(101,242){\footnotesize $x_2$}
\put(81,242){\footnotesize $u_2$}

\put(160,234){\footnotesize $x_1$}
\put(157,242){\footnotesize $x_2$}
\put(138,242){\footnotesize $u_1$}
\put(254,234){\footnotesize $x_1$}
\put(251,242){\footnotesize $x_2$}
\put(231,242){\footnotesize $u_2$}

\put(312,234){\footnotesize $x_1$}
\put(309,242){\footnotesize $x_2$}
\put(290,242){\footnotesize $u_1$}
\put(406,234){\footnotesize $x_1$}
\put(403,242){\footnotesize $x_2$}
\put(383,242){\footnotesize $u_2$}

\put(10,161){\footnotesize $x_1$}
\put(7,169){\footnotesize $x_2$}
\put(-12,169){\footnotesize $u_1$}
\put(104,161){\footnotesize $x_1$}
\put(101,169){\footnotesize $x_2$}
\put(81,169){\footnotesize $u_2$}

\put(160,161){\footnotesize $x_1$}
\put(157,169){\footnotesize $x_2$}
\put(138,169){\footnotesize $u_1$}
\put(254,161){\footnotesize $x_1$}
\put(251,169){\footnotesize $x_2$}
\put(231,169){\footnotesize $u_2$}

\put(10,88){\footnotesize $x_1$}
\put(7,96){\footnotesize $x_2$}
\put(-12,96){\footnotesize $u_1$}
\put(104,88){\footnotesize $x_1$}
\put(101,96){\footnotesize $x_2$}
\put(81,96){\footnotesize $u_2$}

\put(160,88){\footnotesize $x_1$}
\put(157,96){\footnotesize $x_2$}
\put(138,96){\footnotesize $u_1$}
\put(254,88){\footnotesize $x_1$}
\put(251,96){\footnotesize $x_2$}
\put(231,96){\footnotesize $u_2$}

\put(312,88){\footnotesize $x_1$}
\put(309,96){\footnotesize $x_2$}
\put(290,96){\footnotesize $u_1$}
\put(406,88){\footnotesize $x_1$}
\put(403,96){\footnotesize $x_2$}
\put(383,96){\footnotesize $u_2$}

\put(10,15){\footnotesize $x_1$}
\put(7,23){\footnotesize $x_2$}
\put(-12,23){\footnotesize $u_1$}
\put(104,15){\footnotesize $x_1$}
\put(101,23){\footnotesize $x_2$}
\put(81,23){\footnotesize $u_2$}

\put(160,15){\footnotesize $x_1$}
\put(157,23){\footnotesize $x_2$}
\put(138,23){\footnotesize $u_1$}
\put(254,15){\footnotesize $x_1$}
\put(251,23){\footnotesize $x_2$}
\put(231,23){\footnotesize $u_2$}

\put(312,15){\footnotesize $x_1$}
\put(309,23){\footnotesize $x_2$}
\put(290,23){\footnotesize $u_1$}
\put(406,15){\footnotesize $x_1$}
\put(403,23){\footnotesize $x_2$}
\put(383,23){\footnotesize $u_2$}

\put(32,224){Type $1$}
\put(186,224){Type $2$}
\put(335,224){Type $3$}
\put(27,151){Type $4$-$1$}
\put(178,151){Type $4$-$2$}
\put(27,78){Type $5$-$1$}
\put(178,78){Type $5$-$2$}
\put(328,78){Type $5$-$3$}
\put(27,3){Type $6$-$1$}
\put(178,3){Type $6$-$2$}
\put(328,3){Type $6$-$3$}
\end{picture}
\caption{The local models of $X_P$. The points on the boundaries colored blue indicate the points corresponding to endpoints of $P$.}
\label{figure:local_models_of_XP_gen}
\end{figure}
Vertices of Types 4, 5, 6 are classified into subclasses as follows. 
Vertices of Types 4-1, 5-1, 6-1 are not connected to endpoints of $P$ by edges. 
Vertices of Types 4-2, 5-2 are connected to a single endpoint of $P$ by an edge. 
Vertices of Types 5-3, 6-2 are connected to two endpoints of $P$ by edges. 
Vertices of Types 6-3 are connected to four endpoints of $P$ by edges. 
In Figure \ref{figure:local_models_of_XP_gen}, endpoints of $P$ are colored in blue. 
As indicated in the figure, for the vertices connected to endpoints of $P$ by edges, that is, for vertices of Types 
4-2, 5-2, 5-3, 6-2, 6-3, we think of neighborhoods of the vertices and the edges to the endpoints as 
local models of them.

A regular neighborhood of a vertex of $X_P$ is homeomorphic to one of the  six types, 
Types 1--6, of local models shown in Figure \ref{figure:local_models_of_XP_gen}. 
A regular neighborhood $\Nbd(P ; X_{P})$ of the divide $P$ is then decomposed into pieces each homeomorphic to 
one of the models. 
Let $C$ be a piece of the decomposition of $\Nbd(P ; X_{P})$. 
We define the following terms for the neighborhood $C$ and its $3$-thickening $M_{C}$. 
\begin{definition}

A connected component of the scars in $\partial C$ of the decomposition of $\Nbd(P;X_{P})$ 
is called an \emph{gluing part} of $C$, and the closure of a connected component of the remaining part $\partial C$ is called a \emph{cusp part} of $C$. See Figure \ref{figure:gluing_cusp}.
Then, connected components of the subsets of $\partial M_C$ corresponding to the gluing part and the cusp part of $C$ is called 
\emph{gluing faces} and \emph{cusp faces}, respectively.
The closure of a connected component obtained by excluding all gluing and cusp faces from $\partial M_{C}$ is called 
a \emph{mirror face} of $M_{C}$. See Figure \ref{figure:type1_glued_faces_and_cusps}.
\end{definition}

\label{figure:gluing_cusp}
\begin{figure}[htbp]
\centering\includegraphics[width=6cm]{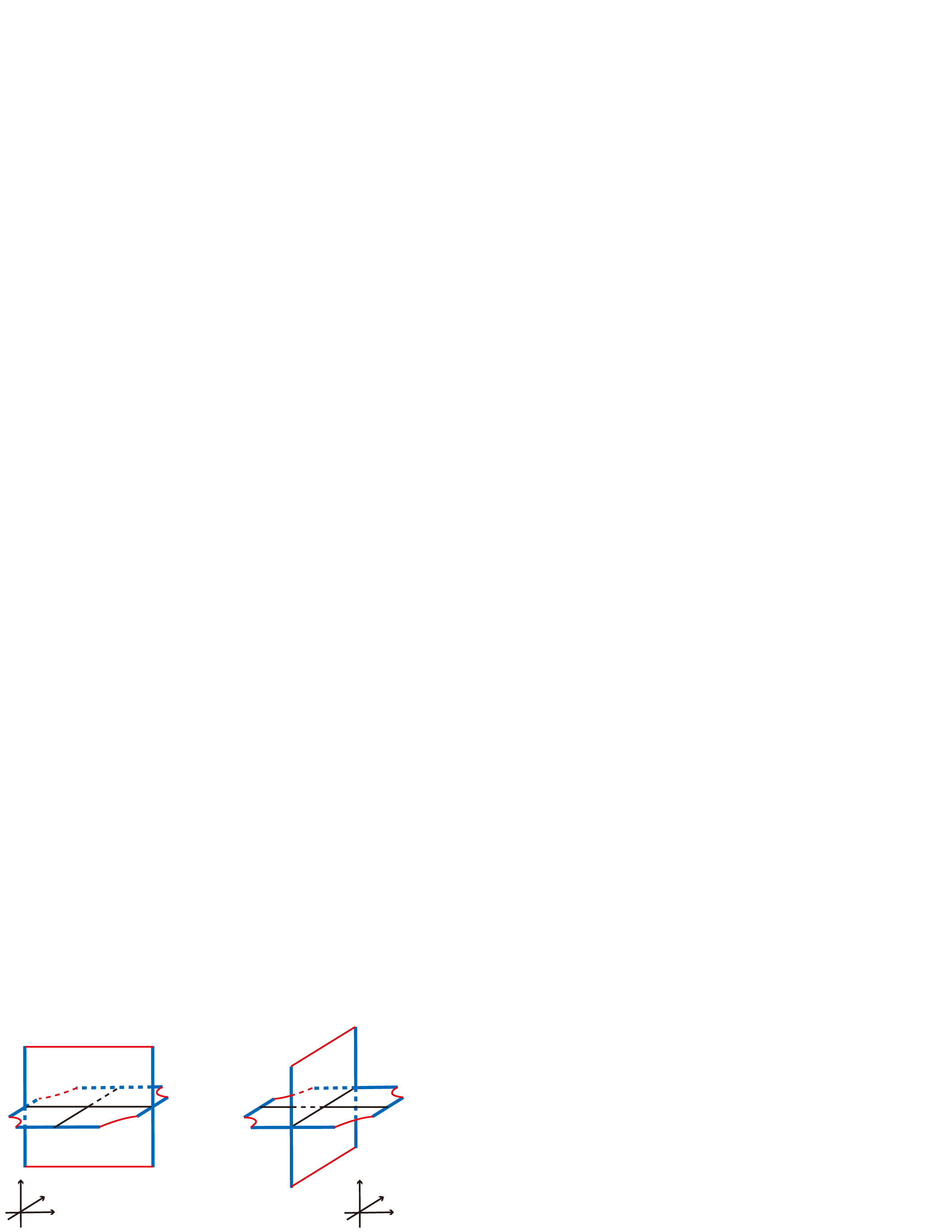}
\begin{picture}(400,0)(0,0)
\put(198,60){$\cup$}
\put(140,17){\footnotesize $x_1$}
\put(134,26){\footnotesize $x_2$}
\put(111,26){\footnotesize $u_1$}
\put(284,17){\footnotesize $x_1$}
\put(279,26){\footnotesize $x_2$}
\put(255,26){\footnotesize $u_2$}
\end{picture}
\caption{Gluing parts (blue, thick) and cusp parts (red, normal).}
\label{figure:gluing_cusp}
\end{figure}
\begin{figure}[htbp]
\centering\includegraphics[width=8cm]{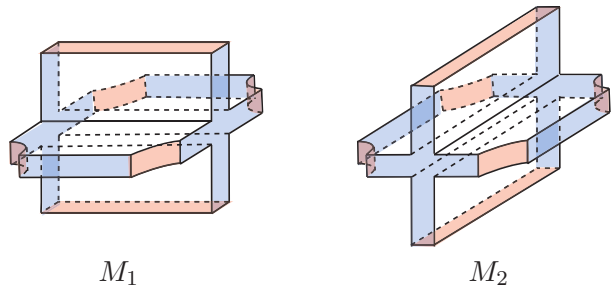}
\begin{picture}(400,0)(0,0)
\put(120,0){$M_1$}
\put(260,0){$M_2$}
\end{picture}
\caption{Gluing faces (blue), cusp faces (red) and mirror faces (white).}
\label{figure:type1_glued_faces_and_cusps}
\end{figure}

\subsection{Ideal polyhedral decomposition of the 3-manifold $\boldsymbol{ \Int  M_P }$}
\label{subsec:Ideal polyhedral decomposition of the 3-manifold}

In this subsection, we always assume that divides are connected.

\begin{definition}
A divide $P$ is said to be  \textit{prime} if the link $L_{P}$ of $P$ is prime. 
\end{definition}

The following proposition shows that the local models of regular neighborhoods of vertices of $X_P$ for a prime divide $P$ are quite restrictive.

\begin{proposition}
Let $P$ be a divide. 
If $P$ is prime, then each vertex of $X_P$ has a regular neighborhood homeomorphic to one of the six models shown in 
Figure $\ref{figure:local_models_of_XP_prime}$, 
where the blue points in the figure show the endpoints of the divide $P$. 
\end{proposition}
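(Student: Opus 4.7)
The plan is to prove this by contrapositive: if some vertex $v$ of $X_P$ has a regular neighborhood of Type $4$-$1$, $5$-$1$, $5$-$2$, $6$-$1$, or $6$-$2$, then the link $L_P$ is not prime, contradicting the hypothesis. Inspection of Figure \ref{figure:local_models_of_XP_gen} reveals that the five excluded types are characterized locally by the following feature at $v$: there is an edge $e$ of $P$ meeting $v$ whose two adjacent regions $R, R' \subset D \setminus P$ are both external, while the far end of $e$ is not an endpoint of $P$ on $\partial D$. The six permitted types are precisely those in which the arrangement of endpoint-edges at $v$ prevents this configuration.

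Given such an edge $e$, I would first construct a simple arc $\alpha \subset D$ with one endpoint in $\partial D \cap \Cl R$ and the other in $\partial D \cap \Cl R'$, whose interior runs through $R \cup \{v\} \cup R'$ and crosses $P$ transversely at a single point $q$ on $e$ chosen close to $v$. Together with a suitable arc of $\partial D$, $\alpha$ bounds a disk $D_\alpha \subset D$ that can be arranged to contain $v$, while $D \setminus D_\alpha$ contains the far continuation of $e$ and the remainder of $P$. Using the Hirasawa/A'Campo parametrization $h : (D \times [-\pi, \pi])/{\sim} \to \partial W \cong S^3$ from Example \ref{ex:divide}, the arc $\alpha$ lifts to a $2$-sphere $\Sigma \subset \partial W$: the cylinder $\alpha \times [-\pi, \pi]/{\sim}$ has its two end-circles collapsed to points (since $\partial \alpha \subset \partial D$), yielding a $2$-sphere. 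A direct check then shows that $\Sigma \cap L_P$ consists of exactly two points, namely the two preimages of $q$, so $\Sigma$ realizes $L_P$ as a connected sum $L_1 \# L_2$.

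The final step, which is the main technical obstacle, is to verify case-by-case that both summands $L_1$ and $L_2$ are non-trivial. The ball on the $D_\alpha$-side harbors the crossing at $v$ and therefore contributes a non-trivial tangle. For the other ball, one must check, in each of the five excluded types, that the continuation of $e$ beyond $v$ carries at least one additional double point or essential closed component of $P$: if it did not, then $e$ would have to terminate at $\partial D$ as an endpoint-edge, and $v$ would in fact realize one of the six permitted types, contradicting the hypothesis. Carrying out this dichotomy in each of the five excluded configurations is elementary but requires a separate inspection, and constitutes the main effort of the proof.
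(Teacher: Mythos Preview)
Your approach coincides with the paper's: locate an edge of $P$ both of whose adjacent regions are external and whose other end is again a double point, draw a properly embedded arc in $D$ crossing that edge once, and lift it to a $2$-sphere in $\partial W$ exhibiting $L_P$ as a nontrivial connected sum. Two corrections are needed, however.

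First, the arc $\alpha$ must \emph{not} pass through $v$. You write that its interior runs through $R\cup\{v\}\cup R'$, but $v\in P$; if $v\in\alpha$ then your sphere $\Sigma$ meets $L_P$ in six points (two over $q$ and four over the double point $v$), not two, and $\Sigma$ is not a connected-sum sphere. Your own clause ``crosses $P$ transversely at a single point $q$'' is already inconsistent with $v\in\alpha$. The correct construction, and the one the paper uses, is simply $\alpha\subset R\cup\{q\}\cup R'$: start on $\partial D$ in $\Cl R$, cross $e$ once at an interior point $q$, and return to $\partial D$ through $R'$, staying away from $v$ entirely. The paper writes the resulting sphere directly as $\{(x,u)\in\partial W\mid x\in\alpha\}$, which agrees with your Hirasawa-parametrization description.

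Second, the non-triviality of both summands is immediate and does not require a type-by-type tangle inspection. Your own characterization of the excluded types already says the far end of $e$ is a double point $v'$; hence $\alpha$ splits $P$ into two sub-divides $P_1\ni v$ and $P_2\ni v'$, each containing at least one double point, and the link of a divide with a double point is never the unknot (a standard consequence of A'Campo's theory). The paper invokes exactly this in one line. Your sentence ``harbors the crossing at $v$ and therefore contributes a non-trivial tangle'' is not a proof on its own, since a single diagrammatic crossing does not preclude the unknot; the divide-theoretic fact is what is actually being used.
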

\begin{figure}[htbp]
\centering\includegraphics[width=14.5cm]{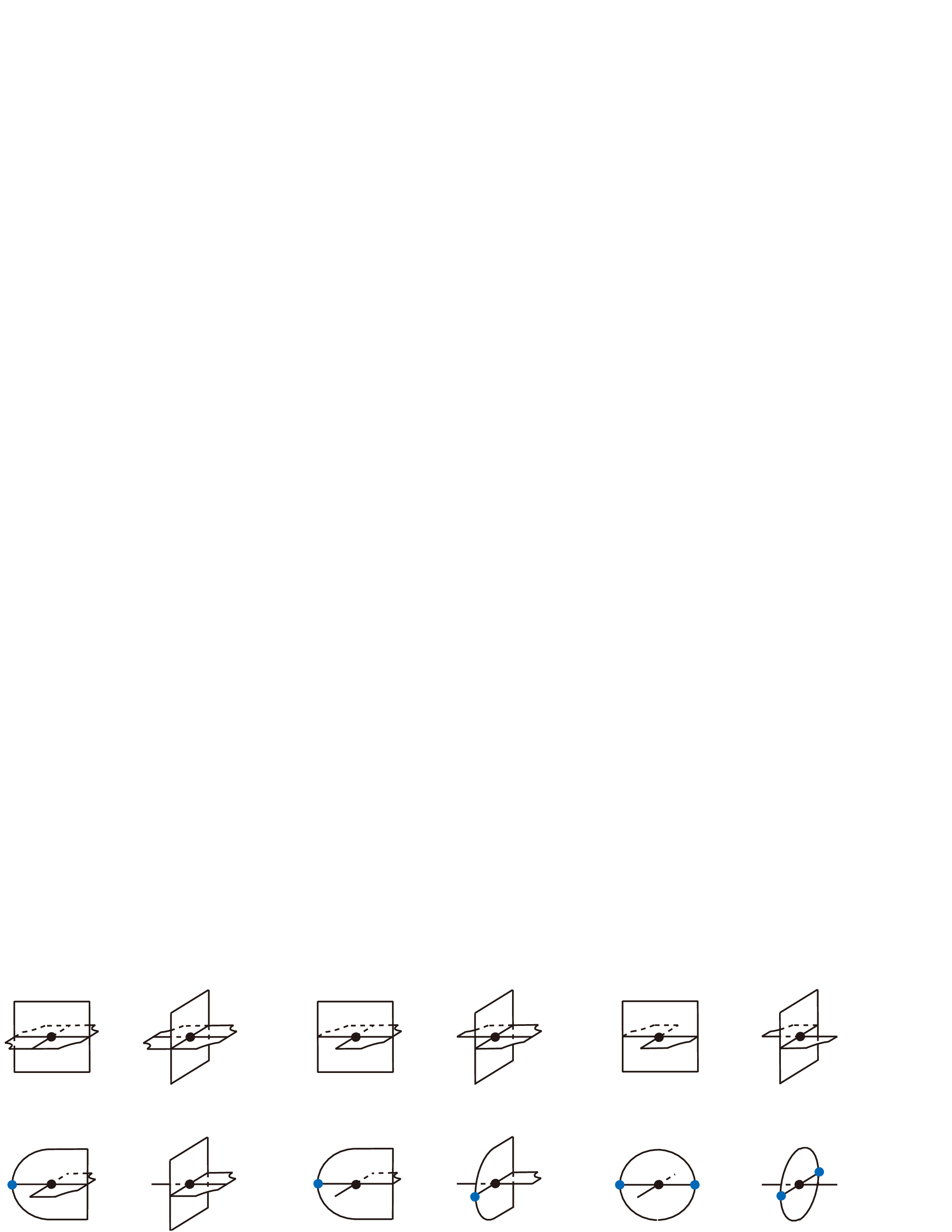}
\begin{picture}(400,0)(0,0)
\put(48,106){$\cup$}
\put(201,106){$\cup$}
\put(351,106){$\cup$}
\put(48,35){$\cup$}
\put(201,35){$\cup$}
\put(351,35){$\cup$}

\put(35,77){Type $1$}
\put(185,77){Type $2$}
\put(335,77){Type $3$}
\put(30,5){Type $4$-$2$}
\put(180,5){Type $5$-$3$}
\put(330,5){Type $6$-$3$}
\end{picture}
\caption{The local models of $X_P$ for a prime divide $P$.}
\label{figure:local_models_of_XP_prime}
\end{figure}
\begin{proof}
Let $P$ be a prime divide.
Suppose first that there exists an edge $E$ whose endpoints are double points of the divide $P$.
In this case, at least one of the two regions of $P$ that touches the edge $E$ must be an internal region of $P$.
Indeed, if both of them are exterior regions, there exists a simple arc $l$ properly embedded in $D$ so that 
$l$ intersects $P$ once and transversely at a point in $E$. 
This arc $l$ decomposes $D$ into two $2$-disks $D_1$ and $D_2$, and $P$ is decomposed into two divides 
$P_1 \subset D_1$, $P_2 \subset D_2$ accordingly.
Then, we have a connected sum decomposition $L_{ P} = L_{P_{1}} \# L_{P_{2}}$ of $L_P$ by the 
$2$-sphere $\{(x, u) \in \partial W \mid x \in l\}$ in $\partial W$. 
From the construction, 
each of the two divides $P_1$ and $P_2$ has at least one double point, which implies that neither $L_{P_1}$ nor $L_{P_2}$  is the trivial knot. 
This contradicts the assumption that $L_P$ is prime. 
The above shows that any edge of $P$ connecting double points of $P$ always touches an inner region of $P$.
Of the eleven (sub)types of models in Figure \ref{figure:local_models_of_XP_gen}, 
those with such edges are of Types 1, 2, 3, 4-2 and 5-3 shown in Figure \ref{figure:local_models_of_XP_prime}.
Therefore, in this case, each vertex of $X_P$ has a regular neighborhood homeomorphic to one of these $5$ types.

Next, suppose that there does not exist an edge whose endpoints are double points of the divide $P$.
Then, any edge of $P$ connects a double point of $P$ and an endpoint of $P$. 
Of the local models in Figure \ref{figure:local_models_of_XP_gen}, 
those with such edges are only of Types $6$-$3$. 
Therefore, in this case, each vertex of $X_P$ has a regular neighborhood homeomorphic to the model of Type $6$-$3$.
\end{proof}

\begin{definition}
Let c be a double point of a prime divide $P$. 
We say that $c$ is of Types $1$, $2$, $3$, $4$-$2$, $5$-$3$ and $6$-$3$ if 
a regular neighborhood $\Nbd (c; X_P)$ of $c$ is homeomorphic to the model of 
Types $1$, $2$, $3$, $4$-$2$, $5$-$3$ and $6$-$3$, respectively. 
\end{definition}

\begin{theorem}
\label{theo:main}
Let $P \subset D$ be a connected prime divide with at least one double point. 
If $P$ has a double point of Type $6$-$3$, then $L_P$ is the Hopf link. 
Otherwise, let $n_1$, $n_2$, $n_3$, $n_4$ and $n_5$ be the number of its double points of 
Types $1$, $2$, $3$, $4$-$2$ and $5$-$3$, respectively.  
Then $\Int M_P$ is a hyperbolic $3$-manifold of volume 
\[ 10 n_3 v_{\mathrm{tet}} + (4n_1 + 2n_4 + n_5) v_{\mathrm{oct}} + n_2 v_{\mathrm{cuboct}}. \]
Here, $v_{\mathrm{tet}}$, $v_{\mathrm{oct}}$ and $v_{\mathrm{cuboct}}$ are 
the volumes of ideal hyperbolic regular tetrahedron, octahedron and cuboctahedron, respectively. 
\end{theorem}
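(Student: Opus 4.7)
The plan is to extend the strategy that Costantino--Thurston used for Theorem \ref{prop:2v8+10v4} from their two local pieces $A_1$ and $A_2$ to the five local models of a vertex of $X_P$ that arise for a prime divide (Figure \ref{figure:local_models_of_XP_prime}). Concretely, for each type of vertex $c$, I would construct an explicit ideal hyperbolic polyhedral decomposition of the preimage $\pi^{-1}(\Nbd(c;X_P))$ into truncated regular tetrahedra, octahedra, and/or cuboctahedra, and then glue these local decompositions together using the fact that the gluing faces are totally geodesic three-holed spheres and therefore match rigidly across neighboring pieces.

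The first step is to dispose of the degenerate Type $6$-$3$ case. If $c$ is a vertex of this type, all four edges of $P$ emanating from $c$ end at endpoints of $P$ on $\partial D$; combined with the assumption that $P$ is connected, this forces $P$ to consist of precisely $c$ together with these four arcs, i.e.\ $P$ is the standard divide of a node. One then checks directly from A'Campo's construction that $L_P$ in this case is the Hopf link.

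For the remaining five types, I would specify, for each local piece $C$, a decomposition of its $3$-thickening $M_C$ into truncated ideal regular hyperbolic polyhedra with the following polyhedral content in $\pi^{-1}(C)$: Type $1$ contributes $4$ octahedra, Type $2$ contributes $1$ cuboctahedron, Type $3$ contributes $10$ tetrahedra (exactly matching the decomposition of $A_2$ shown in Figure \ref{figure:ten_tetrahedra_for_A2}), Type $4$-$2$ contributes $2$ octahedra, and Type $5$-$3$ contributes $1$ octahedron. In each case one must arrange that the gluing faces are ideal triangles of the correct symmetry class, the cusp faces sit inside horospherical cross sections at ideal vertices, and the mirror faces pair up under Turaev's reconstruction so that doubling along them produces the stated polyhedra. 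The new ingredient compared to \cite{CT08} is the appearance of the regular ideal cuboctahedron for Type $2$: it must be exhibited by an explicit picture analogous to Figures \ref{figure:two_octahedra_for_A1} and \ref{figure:ten_tetrahedra_for_A2}, with its eight triangular truncation disks identified with the gluing and cusp faces of $M_C$.

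The main obstacle will be the global verification: after assembling all of these local decompositions along the edges and regions of $X_P$, one must check that at every edge of the resulting ideal polyhedral complex the dihedral angles sum to $2\pi$ (using $\pi/3$ for the tetrahedron, $\pi/2$ for the octahedron, and the values $\pi/3$ and $\pi/2$ arising at the two kinds of edges of the cuboctahedron), and that the developing map closes up on each horospherical cross section. Once this combinatorial check is in place, Poincar\'e's polyhedron theorem produces a complete finite-volume hyperbolic structure on $\Int M_P$, and the volume is then the sum of the individual polyhedral volumes, which yields the formula $10 n_3 v_{\mathrm{tet}} + (4n_1 + 2n_4 + n_5) v_{\mathrm{oct}} + n_2 v_{\mathrm{cuboct}}$ asserted in the theorem.
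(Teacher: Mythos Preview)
Your plan matches the paper's approach closely: it proves Lemmas \ref{lem:type-1}--\ref{lem:type-3} giving exactly the polyhedral counts you list for each type, then glues the handlebodies $\pi^{-1}(C_i)$ along the holed spheres on their boundaries and checks the $2\pi$ dihedral-angle condition. Two points need correction. First, the gluing interfaces between adjacent pieces are \emph{not} all three-holed spheres: along an edge of $P$ bordered on both sides by internal regions, the transverse slice of $X_P$ has four wings rather than three, and the corresponding interface in $\pi^{-1}$ is a four-holed sphere. This occurs at every gluing face of a Type~$1$ piece, at two of the four gluing faces of Type~$2$ and Type~$4$-$2$, and never for Types~$3$ or~$5$-$3$. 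The matching still works because adjacent pieces share the edge and hence agree on whether the interface is three- or four-holed, but your rigidity and angle arguments must accommodate both shapes; in particular the ``gluing faces are ideal triangles'' description is wrong for the four-holed case. Second, the ideal regular cuboctahedron has only one combinatorial type of edge (each separates a triangle from a square), not two. One organizing device from the paper worth adopting: for Types $1$, $2$, $4$-$2$, $5$-$3$ it exploits the involution $\iota:(x,u)\mapsto(x,-u)$ to write $\pi^{-1}(C)$ as four isometric copies of a single block $M_{C/\iota}$, which is then identified with (a piece of) a truncated regular octahedron; the cuboctahedron for Type~$2$ appears only after regluing these four pieces.
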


We remark that if a divide $P$ has a double point $c$ of Type $6$-$3$,  it follows from the connectivity of $P$ that 
$P$ is of the $X$-shape and $c$ is its unique double point. 
In this case, $\Int M_P$ is homeomorphic to the Hopf link complement, 
which does not admits a hyperbolic structure. 

In the following, we always assume that a divide $P$ is prime, and there are no vertices of Type $6$-$3$ in $X_P$. 
In order to prove Theorem \ref{theo:main}, we here define several terminology and symbols.


\begin{figure}[htbp]
\centering\includegraphics[width=11cm]{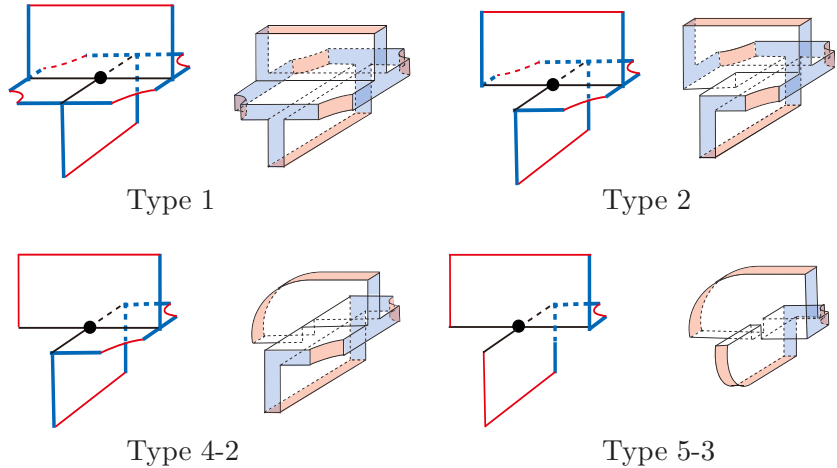}
\caption{The quotient $C / \iota$ and its $3$-thickening $M_{C/\iota}$.}
\begin{picture}(400,0)(0,0)
\put(90,130){Type $1$}
\put(270,130){Type $2$}
\put(90,35){Type $4$-$2$}
\put(270,35){Type $5$-$3$}
\end{picture}
\label{figure:C_over_tau}
\end{figure}

Let $c$ be a double point of $P$, which is not of Type $3$. 
A regular neighborhood $\Nbd(c; X_P) \subset \{(x_{1},x_{2},u_{1},u_{2}) \in \RR^{4}\}$ of $c$ is 
one of the models of Types $1$, $2$, $4$-$2$ and $5$-$3$ shown in Figure \ref{figure:local_models_of_XP_prime}. 
Let $C / \iota$ be the quotient of $C$ by the involution $\iota : W \to W, (x,u) \to (x, -u)$ of the $4$-ball $W$. 
Then the quotient $C / \iota$ and its $3$-thickening $M_{C / \iota}$ for each Type is as shown in Figure \ref{figure:C_over_tau}.
The gluing, cusp, and mirror faces of $M_{C / \iota}$ are naturally defined from those of $M_{C}$. 
By the same idea of Turaev's reconstruction explained in the outlined proof of Theorem \ref{prop:2v8+10v4}, 
the preimage $\pi^{-1}(C) / \iota$ is obtained by gluing two copies $M_{C/\iota}^{+}$ and $M_{ C/\iota}^{-}$ of $M_{C / \iota}$ 
along the corresponding mirror faces.
For each $\epsilon \in  \{+,-\}$, let $M_{C/\iota}^{\epsilon +}$, $M_{C/\iota}^{\epsilon -}$ be the two lifts of 
$M_{C/\iota}^{\epsilon}$ in $\pi^{-1}(C)$. 
Then, the decomposition $\pi^{-1}(C) / \iota = M_{C/\iota}^{+} \cup M_{C/\iota}^{-}$ induces 
a decomposition of $\pi^{-1}(C)$ into four copies of $M_{C / \iota}$, where they are glued along 
their mirror faces in a natural way. 

By the way, we can regard the 3-thickening $M_{C / \iota}$ as a truncated hyperbolic polyhedron as shown in Figure 
\ref{figure:identify_3thickening_polyhedron}. 
In fact, if $c$ is of Type $1$, $M_{C / \iota}$ is nothing but a truncated ideal regular hyperbolic octahedron, and 
if $c$ is of Type $2$, $M_{C / \iota}$ is obtained by deforming a truncated ideal regular hyperbolic octahedron so that 
one facet shrinks to be an edge of dihedral angle $\pi/2$. 
Similarly, by applying the deformations shown in Figure \ref{figure:identify_3thickening_polyhedron}, 
we see that if $c$ is of Type $4$-$2$ and $5$-$3$, $M_{C / \iota}$ is a half and a quoter of 
a truncated ideal regular hyperbolic octahedron, respectively. 
\begin{figure}[htbp]
\centering\includegraphics[width=15cm]{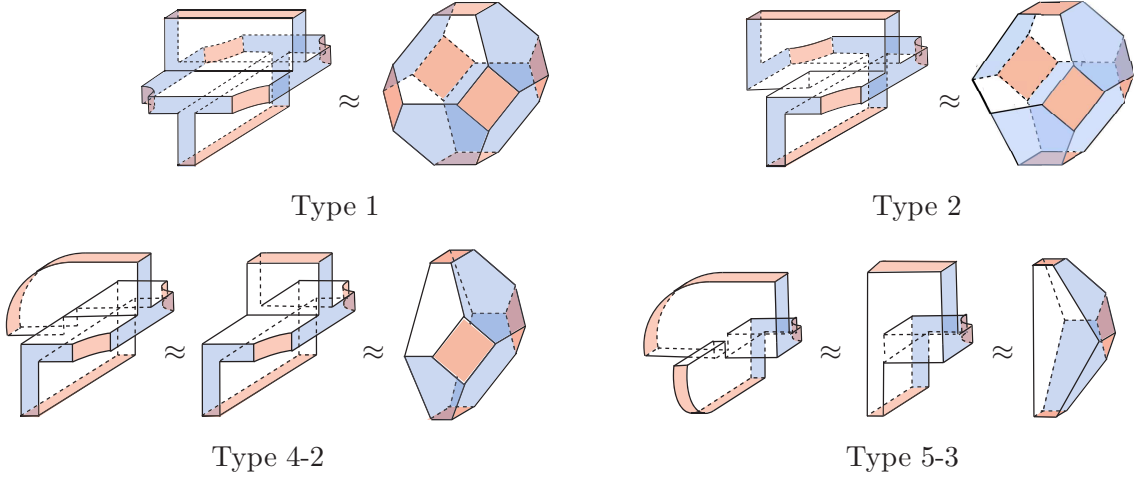}
\caption{The $3$-thickenings $M_{C/\iota}$ as truncated hyperbolic polyhedra.} 
\begin{picture}(400,0)(0,0)
\put(118,170){$\approx$}
\put(345,170){$\approx$}
\put(52,75){$\approx$}
\put(127,75){$\approx$}
\put(300,75){$\approx$}
\put(365,75){$\approx$}

\put(100,130){Type $1$}
\put(320,130){Type $2$}

\put(70,35){Type $4$-$2$}
\put(310,35){Type $5$-$3$}

\end{picture}
\label{figure:identify_3thickening_polyhedron}
\end{figure}

Based on this observation, 
we can show the following Lemmas \ref{lem:type-1}--\ref{lem:type-3}.

\begin{lemma}
\label{lem:type-1}
Let $c$ be a vertex of $X_P$ of Type $1$. 
Then $\pi^{-1}(C)$ is a genus-$4$ handlebody obtained by gluing the faces of four truncated ideal regular hyperbolic octahedra 
as shown in Figure $\ref{figure:type1_decomposition}$. 
This genus-$4$ handlebody has four four-holed spheres on the boundary, 
each of which consists of four faces of these truncated octahedra.
\end{lemma}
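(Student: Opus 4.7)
The plan is to realize explicitly the decomposition prefigured by the discussion preceding the lemma. By Turaev's reconstruction applied through the $\iota$-quotient, $\pi^{-1}(C)/\iota$ is assembled from two copies $M_{C/\iota}^{+}$ and $M_{C/\iota}^{-}$ of $M_{C/\iota}$ glued along their mirror faces; passing to the $\iota$-double cover, $\pi^{-1}(C)$ is the union of the four lifts $M_{C/\iota}^{\epsilon_1 \epsilon_2}$ ($\epsilon_i \in \{+, -\}$), with the pairing of mirror faces dictated by the combinatorics of the lift and the action $(x,u) \mapsto (x,-u)$. For a Type $1$ vertex, Figure \ref{figure:identify_3thickening_polyhedron} identifies $M_{C/\iota}$ with a genuine truncated ideal regular hyperbolic octahedron, yielding the four-octahedron decomposition asserted in the lemma. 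The precise gluing pattern depicted in Figure \ref{figure:type1_decomposition} should then be verified by tracking the local coordinates $(x_1, x_2, u_1, u_2)$ of the Type $1$ model through Turaev's $[-1,1]$-bundle construction, and by labelling the square faces of each octahedron as gluing, cusp, or mirror according to Figures \ref{figure:gluing_cusp} and \ref{figure:type1_glued_faces_and_cusps}.

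To see that the resulting $3$-manifold is a handlebody, the key observation is that $C$ collapses onto a $1$-dimensional spine (a neighborhood of the $4$-valent vertex in $X_P$), so that the $[-1,1]$-bundle procedure of Turaev's reconstruction applied along $\iota$ produces a handlebody. The genus can then be pinned down by an Euler characteristic computation: after removing the ideal cusps, each of the four truncated octahedra contributes $\chi = 1$, and each pair of identified mirror disks contributes $-1$. Counting the mirror identifications among the four octahedra (two on the $\iota$-level, doubled by the covering) should give $\chi(\pi^{-1}(C)) = -3$, whence the handlebody has genus $4$.

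Finally, the four four-holed spheres on $\partial \pi^{-1}(C)$ correspond to the four edges of $X_P$ emanating from $c$, i.e.\ the four branches of the double point. Each such edge lies below exactly one square face in each of the four truncated octahedra, and these four squares assemble into a single four-holed sphere via the adjacent mirror identifications; the four punctures are the ideal cusps sitting at the corners of the squares. The main obstacle is the careful combinatorial bookkeeping of which square faces are mirror, cusp, or gluing, and how they are identified under both the Turaev gluing and the $\iota$-lift; once this is organized, the handlebody structure, the Euler characteristic count, and the identification of the four $4$-holed spheres all follow mechanically, and the resulting picture matches Figure \ref{figure:type1_decomposition}.
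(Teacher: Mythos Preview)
Your proposal follows the same route as the paper: identify $M_{C/\iota}$ with a truncated ideal regular hyperbolic octahedron (as in Figure~\ref{figure:identify_3thickening_polyhedron}), then obtain $\pi^{-1}(C)$ as the union of the four lifts $M_{C/\iota}^{\epsilon_1\epsilon_2}$ glued along their mirror faces. The paper's own proof is extremely terse --- it simply records this decomposition, points to Figures~\ref{figure:type1_block_and_octahedron} and~\ref{figure:type1_four_octahedra}, and asserts that the result is ``clearly'' a genus-$4$ handlebody with the required four-holed spheres. Your added justifications (the collapse of $C$ onto a $1$-complex to see the handlebody structure, and the correspondence between the four branches of $P$ at $c$ and the four four-holed spheres) are in the right spirit and supply detail the paper omits.

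The one place to be careful is the Euler-characteristic count. Your phrase ``two on the $\iota$-level, doubled by the covering'' undercounts: from Figure~\ref{figure:type1_four_octahedra} there are \emph{eight} mirror-face identifications among the four octahedra (each of the four octahedra contributes four mirror faces, paired as $1,\ldots,8$). A naive $\chi = 4 - 8 = -4$ would give genus $5$, not $4$; the point is that adjacent mirror faces on each octahedron share edges (and all four blocks meet along a common edge corresponding to $u_1=u_2=0$), so the inclusion--exclusion must also track these lower-dimensional identifications. If you want to keep the Euler-characteristic approach, you should carry out a full CW count of vertices, edges, faces and $3$-cells after the gluing; otherwise it is cleaner, as the paper does, simply to read the genus off the explicit handlebody picture in Figure~\ref{figure:type1_decomposition}.
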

\begin{figure}[htbp]
\centering\includegraphics[width=14cm]{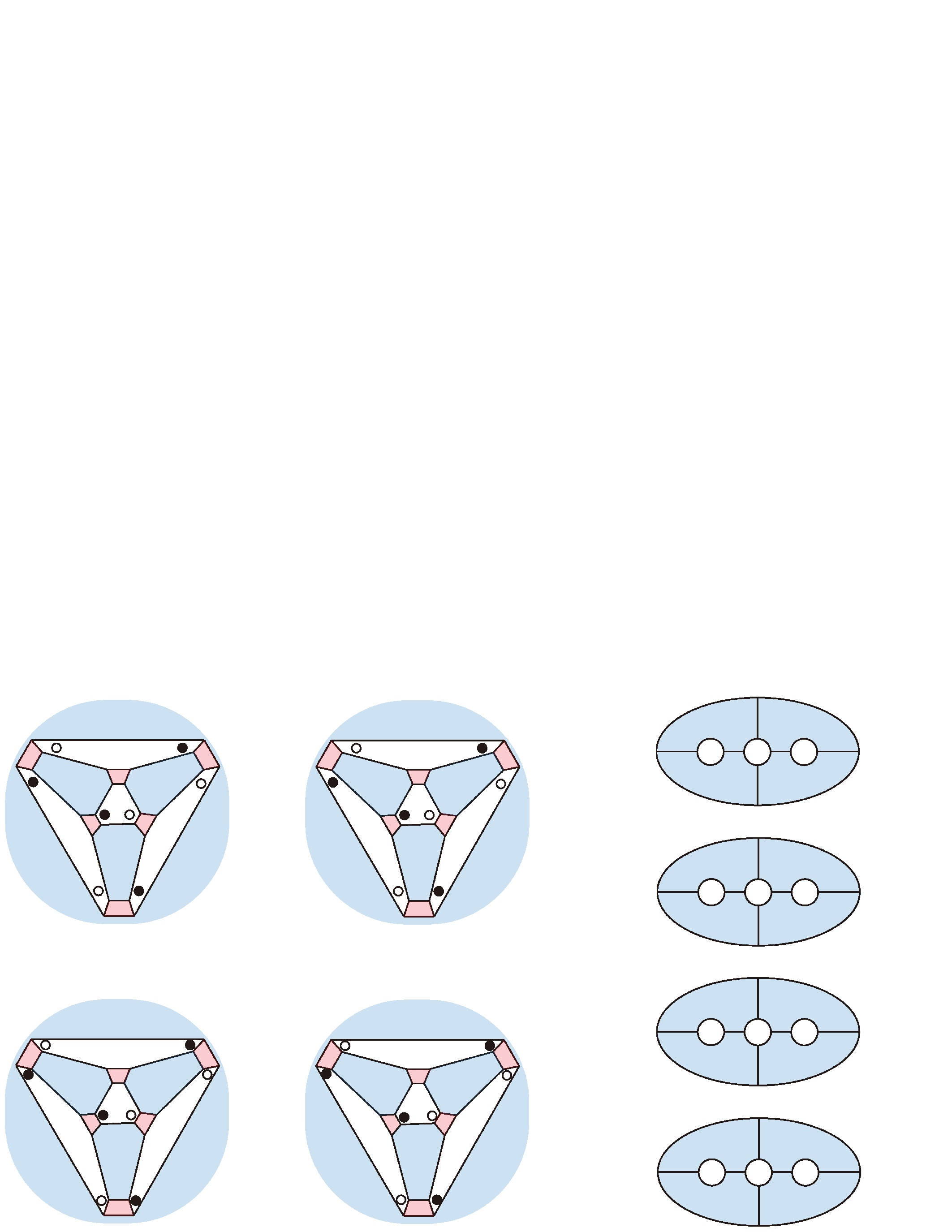}
\begin{picture}(400,0)(0,0)
\put(51,230){$1$}
\put(28,193){$2$}
\put(74,193){$3$}
\put(52,206){$4$}
\put(51,177){\color{blue}$1$}
\put(70,215){\color{blue}$2$}
\put(30,215){\color{blue}$3$}
\put(15,177){\color{blue}$4$}

\put(51,90){$7$}
\put(28,54){$8$}
\put(74,54){$3$}
\put(52,67){$4$}
\put(51,40){\color{blue}$9$}
\put(68,78){\color{blue}$10$}
\put(28,78){\color{blue}11$$}
\put(13,40){\color{blue}$12$}

\put(190,230){$1$}
\put(167,193){$2$}
\put(213,193){$5$}
\put(190,206){$6$}
\put(190,177){\color{blue}$5$}
\put(209,215){\color{blue}$6$}
\put(169,215){\color{blue}$7$}
\put(154,177){\color{blue}$8$}

\put(190,90){$7$}
\put(167,54){$8$}
\put(213,54){$5$}
\put(190,67){$6$}
\put(187,40){\color{blue}$13$}
\put(207,78){\color{blue}$14$}
\put(167,78){\color{blue}15$$}
\put(152,40){\color{blue}$16$}

\put(326,244){\color{blue}$1$}
\put(326,215){\color{blue}$9$}
\put(326,178){\color{blue}$2$}
\put(324,150){\color{blue}$10$}
\put(326,114){\color{blue}$3$}
\put(326,86){\color{blue}$7$}
\put(326,48){\color{blue}$4$}
\put(326,22){\color{blue}$8$}

\put(368,244){\color{blue}$5$}
\put(366,215){\color{blue}$13$}
\put(368,178){\color{blue}$6$}
\put(366,150){\color{blue}$14$}
\put(366,114){\color{blue}$11$}
\put(366,86){\color{blue}$15$}
\put(366,48){\color{blue}$12$}
\put(366,22){\color{blue}$16$}

\end{picture}
\caption{A polyhedral decomposition of $\pi^{-1}(C)$ for a Type $1$ vertex $c$.}
\label{figure:type1_decomposition}
\end{figure}
\begin{proof}

In this case, $M_{C/\iota}$ is a truncated ideal regular hyperbolic octahedron shown in Figure \ref{figure:type1_block_and_octahedron}, 
and the decomposition $\pi^{-1}(C) = M_{C / \iota}^{++} \cup M_{C / \iota}^{+-} \cup M_{C / \iota}^{-+} \cup M_{C / \iota}^{--}$ 
is as in Figure~\ref{figure:type1_four_octahedra}. 
\begin{figure}[htbp]
\centering\includegraphics[width=9cm]{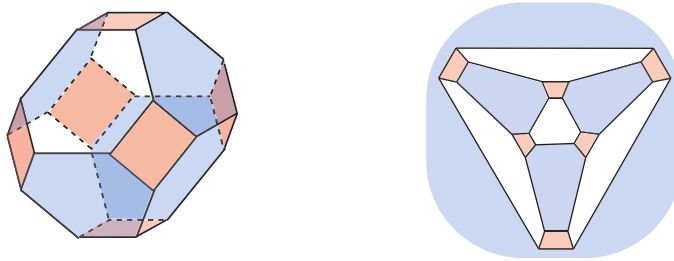}
\begin{picture}(400,0)(0,0)
\end{picture}
\caption{A piece $M_{C/\iota}$ for a Type $1$ vertex.}
\label{figure:type1_block_and_octahedron}
\end{figure}
\begin{figure}[htbp]
\centering\includegraphics[width=8cm]{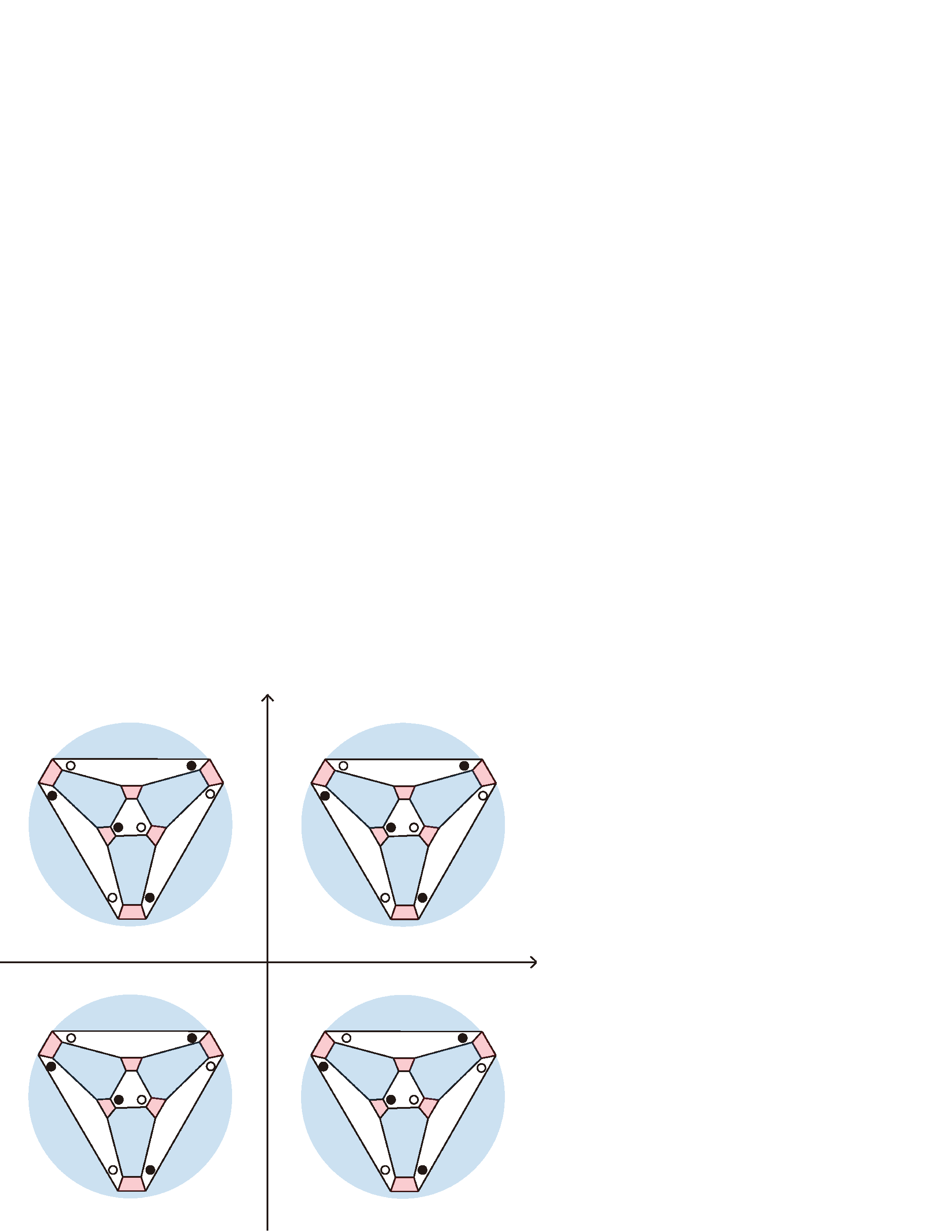}
\begin{picture}(400,0)(0,0)
\put(139,204){$1$}
\put(123,173){$2$}
\put(157,173){$3$}
\put(139,186){$4$}
\put(139,162){\color{blue}$1$}
\put(157,194){\color{blue}$2$}
\put(123,194){\color{blue}$3$}
\put(110,159){\color{blue}$4$}

\put(139,89){$7$}
\put(123,57){$8$}
\put(156,57){$3$}
\put(139,70){$4$}
\put(139,46){\color{blue}$9$}
\put(152,77){\color{blue}$10$}
\put(118,77){\color{blue}11$$}
\put(110,40){\color{blue}$12$}

\put(254,204){$1$}
\put(237,173){$2$}
\put(272,173){$5$}
\put(254,186){$6$}
\put(254,162){\color{blue}$5$}
\put(272,194){\color{blue}$6$}
\put(238,194){\color{blue}$7$}
\put(224,159){\color{blue}$8$}

\put(254,89){$7$}
\put(237,57){$8$}
\put(273,57){$5$}
\put(254,70){$6$}
\put(251,46){\color{blue}$13$}
\put(268,77){\color{blue}$14$}
\put(236,77){\color{blue}15$$}
\put(224,40){\color{blue}$16$}

\put(300,217){$M_{C/\iota}^{++}$}
\put(300,27){$M_{C/\iota}^{+-}$}
\put(70,217){$M_{C/\iota}^{-+}$}
\put(70,27){$M_{C/\iota}^{--}$}

\put(310,116){$u_1$}
\put(205,237){$u_2$}
\end{picture}
\caption{The decomposition $\pi^{-1}(C)=M_{C/\iota}^{++} \cup M_{C/\iota}^{+-} \cup M_{C/\iota}^{-+} \cup M_{C/\iota}^{--}$.}
\label{figure:type1_four_octahedra}
\end{figure}
The resulting 3-manifold $\pi^{-1}(C)$ by the gluing is clearly a genus-$4$ handlebody. 
From the way of gluing, this genus-$4$ handlebody has four four-holed spheres on the boundary, each of which consists 
of four faces of these truncated octahedra as shown on the right in Figure \ref{figure:type1_decomposition}.
\end{proof}

\begin{lemma}
\label{lem:type-2}
Let $c$ be a vertex of $X_P$ of Type $2$. 
Then $\pi^{-1}(C)$ is a genus-$4$ handlebody obtained by gluing the faces of a single truncated ideal regular hyperbolic cuboctahedron 
as shown in Figure $\ref{figure:type2_decomposition}$. 
On the boundary of this genus-$4$ handlebody, there exist 
two three-holed spheres, each of which consists of a single face of this cuboctahedon, 
and two four-holed spheres, 
each of which consists of two faces.  
\end{lemma}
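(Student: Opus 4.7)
The plan is to mirror the argument for Lemma \ref{lem:type-1}, replacing the truncated ideal regular octahedron by the truncated ideal regular cuboctahedron. First I would fix the local model $C$ for a Type $2$ vertex from Figure \ref{figure:local_models_of_XP_prime} and form the quotient $C/\iota$ together with its $3$-thickening $M_{C/\iota}$ as in Figure \ref{figure:C_over_tau}. By Figure \ref{figure:identify_3thickening_polyhedron}, $M_{C/\iota}$ is identified with a truncated ideal regular hyperbolic octahedron in which one triangular mirror facet has been collapsed to a single edge of dihedral angle $\pi/2$. Turaev's reconstruction then gives $\pi^{-1}(C)/\iota$ as the double $M_{C/\iota}^{+}\cup M_{C/\iota}^{-}$ along mirror faces, and lifting by $\iota$ expresses $\pi^{-1}(C)$ as the union of the four copies $M_{C/\iota}^{\pm\pm}$ glued along their mirror faces, exactly as in the Type $1$ case.

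The key geometric step is to show that these four deformed octahedra reassemble to form a single truncated ideal regular hyperbolic cuboctahedron. The crucial observation is that the collapsed facet in each $M_{C/\iota}^{\pm\pm}$ contributes an edge with dihedral angle $\pi/2$, and the four gluings bring these edges together along a common axis, yielding total dihedral angle $4\cdot \pi/2 = 2\pi$. Thus the four pieces close up isometrically around this axis without angle defect, and the resulting polyhedron has the combinatorics of a cuboctahedron: the eight triangular mirror facets of the four deformed octahedra become the eight triangular faces, while the remaining mirror facets pair into the six square faces. The face identifications inherited from $\pi^{-1}(C)$ descend to a face-pairing on this single cuboctahedron.

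Next, following the labelling scheme of Figure \ref{figure:type1_decomposition}, I would record the face-pairing on the cuboctahedron explicitly and read off the topology of $\pi^{-1}(C)$. Exactly as in Lemma \ref{lem:type-1}, the quotient deformation-retracts onto the spine $C$, so $\pi^{-1}(C)$ is a handlebody, and counting its boundary shows genus $4$. Finally, I would analyze the union of the cusp faces on the boundary: the two endpoints of $P$ incident to $c$ each give rise to a single cusp face of the cuboctahedron, producing two three-holed spheres; the remaining two cusp regions, associated to the two internal edges of $P$ at $c$, are each assembled from two cusp faces, producing two four-holed spheres.

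The main obstacle will be the combinatorial bookkeeping in the middle step: one has to verify face-by-face that the identifications induced by the $\iota$-action and by Turaev's reconstruction on the four $M_{C/\iota}^{\pm\pm}$ are compatible with the cuboctahedral symmetry, and not merely that the volumes match. The angle computation $4\cdot\pi/2 = 2\pi$ is the essential geometric input that makes the gluing hyperbolically consistent, but the remaining checks (orientation, matching of truncation squares, and the descent of the cusp face labels to $3$- and $4$-holed spheres) must be done carefully with an explicit labelling analogous to Figure \ref{figure:type1_decomposition}.
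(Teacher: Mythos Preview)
Your overall strategy matches the paper's exactly: take the four copies $M_{C/\iota}^{\pm\pm}$ of the deformed octahedron and show they assemble into a single truncated ideal regular cuboctahedron. The paper carries this out by an explicit two-stage gluing: first $M_{C/\iota}^{++}\cup M_{C/\iota}^{-+}$ and $M_{C/\iota}^{+-}\cup M_{C/\iota}^{--}$ along one pair of mirror faces each, and then those two blocks along two further mirror faces which have become \emph{adjacent} precisely because of the collapsed facet. Your $4\cdot\pi/2=2\pi$ heuristic is the right geometric reason this second gluing closes up into one convex polyhedron, and you correctly flag the face-by-face bookkeeping as the real work.

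There is, however, a concrete error in your boundary analysis. A Type~$2$ vertex has \emph{no} endpoints of $P$ incident to it; the types with incident endpoints are $4$-$2$, $5$-$3$ and $6$-$3$ (see Figure~\ref{figure:local_models_of_XP_prime}). At a Type~$2$ vertex all four half-edges of $P$ go to other double points. The two three-holed spheres arise instead from the two half-edges that border the single \emph{external} region: along those half-edges the gluing part of $\partial C$ is only a tripod (fin plus one internal region of $D$), so its preimage is a three-holed sphere. Along the other two half-edges both adjacent regions of $D$ are internal, giving a cross-shaped gluing part and hence a four-holed sphere. Your accounting of the cuboctahedron faces is also off: the six square faces are formed from pairs of \emph{gluing} (blue) triangles of the deformed octahedra, not from the remaining mirror facets; the eight unglued mirror facets supply the eight triangular faces that are then identified in pairs as in Figure~\ref{figure:type2_decomposition}.
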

\begin{figure}[htbp]
\centering\includegraphics[width=12cm]{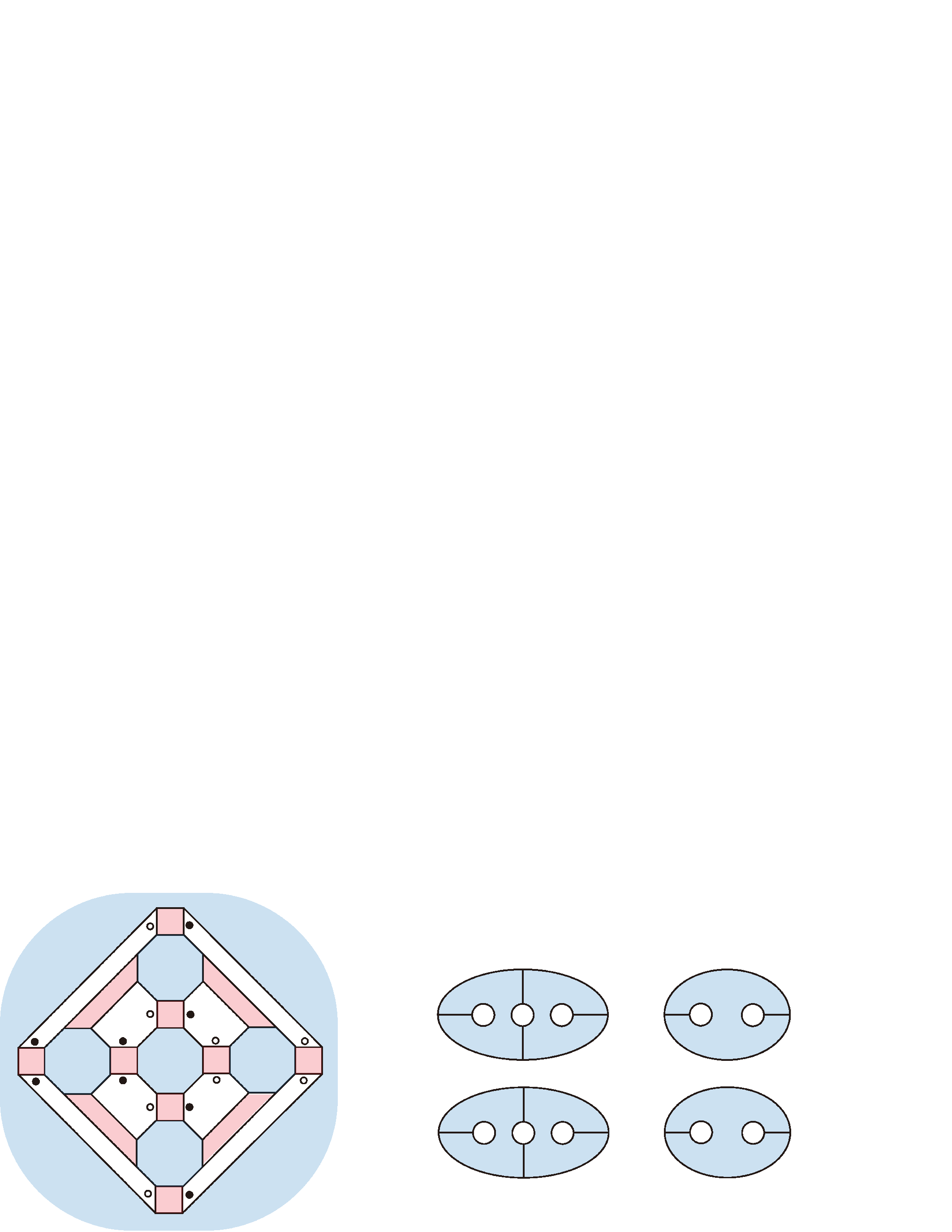}
\begin{picture}(400,0)(0,0)
\put(100,124){\color{blue}$1$}
\put(60,84){\color{blue}$2$}
\put(100,42){\color{blue}$3$}
\put(140,84){\color{blue}$4$}
\put(100,84){\color{blue}$5$}
\put(52,35){\color{blue}$6$}

\put(68,117){$1$}
\put(68,49){$1$}

\put(118,65){$2$}
\put(132,49){$2$}

\put(132,117){$3$}
\put(118,100){$3$}

\put(82,65){$4$}
\put(82,100){$4$}

\put(340,93){\color{blue}$2$}

\put(340,43){\color{blue}$4$}

\put(235,91){\color{blue}$1$}
\put(265,91){\color{blue}$3$}

\put(235,41){\color{blue}$5$}
\put(265,41){\color{blue}$6$}
\end{picture}
\caption{A polyhedral decomposition of $\pi^{-1}(C)$ for a Type $2$ vertex $c$.}
\label{figure:type2_decomposition}
\end{figure}
\begin{proof}

In this case, 
$M_{C/\iota}$ is the hyperbolic polyhedron obtained by deforming a truncated ideal regular hyperbolic octahedron so that 
one facet shrinks to be an edge of dihedral angle $\pi/2$, 
and the decomposition $\pi^{-1}(C) = M_{C / \iota}^{++} \cup M_{C / \iota}^{+-} \cup M_{C / \iota}^{-+} \cup M_{C / \iota}^{--}$ 
is as in Figure \ref{figure:type2_four_octahedra}.
\begin{figure}[htbp]
\centering\includegraphics[width=10cm]{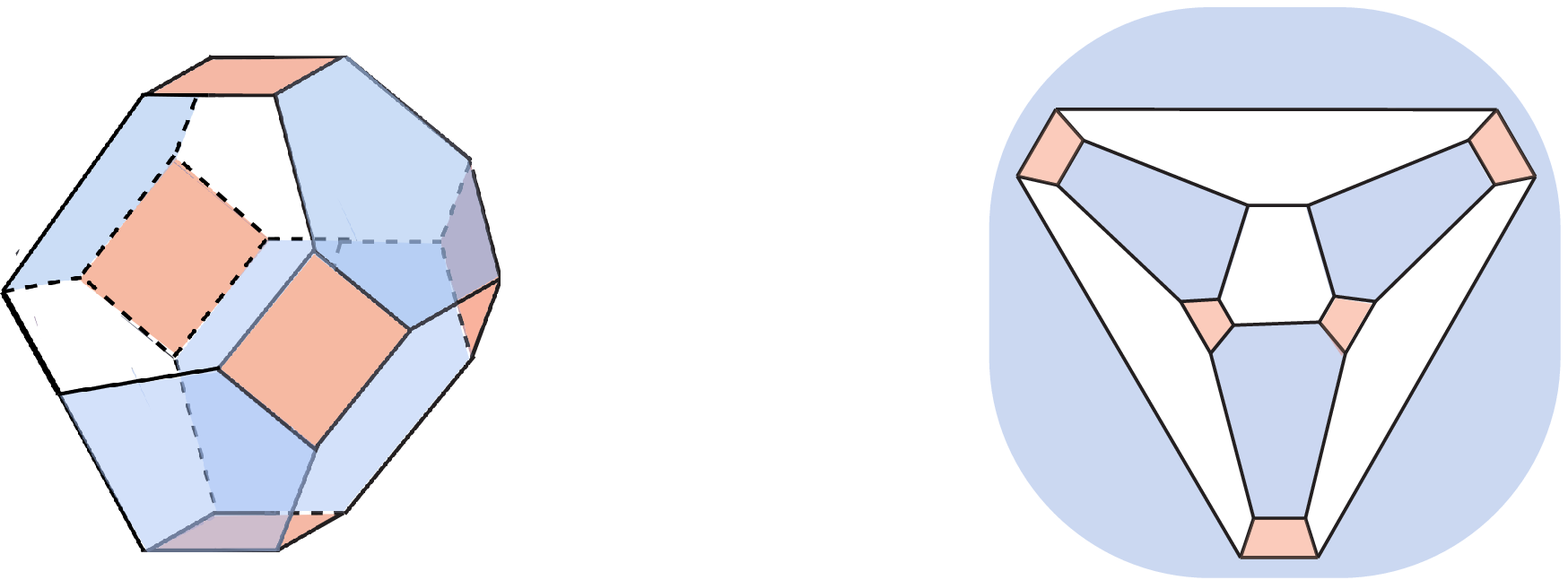}
\begin{picture}(400,0)(0,0)
\end{picture}
\caption{A piece $M_{C/\iota}$ for a Type $2$ vertex.}
\label{figure:type2_block_and_octahedron}
\end{figure}
\begin{figure}[htbp]
\centering\includegraphics[width=8cm]{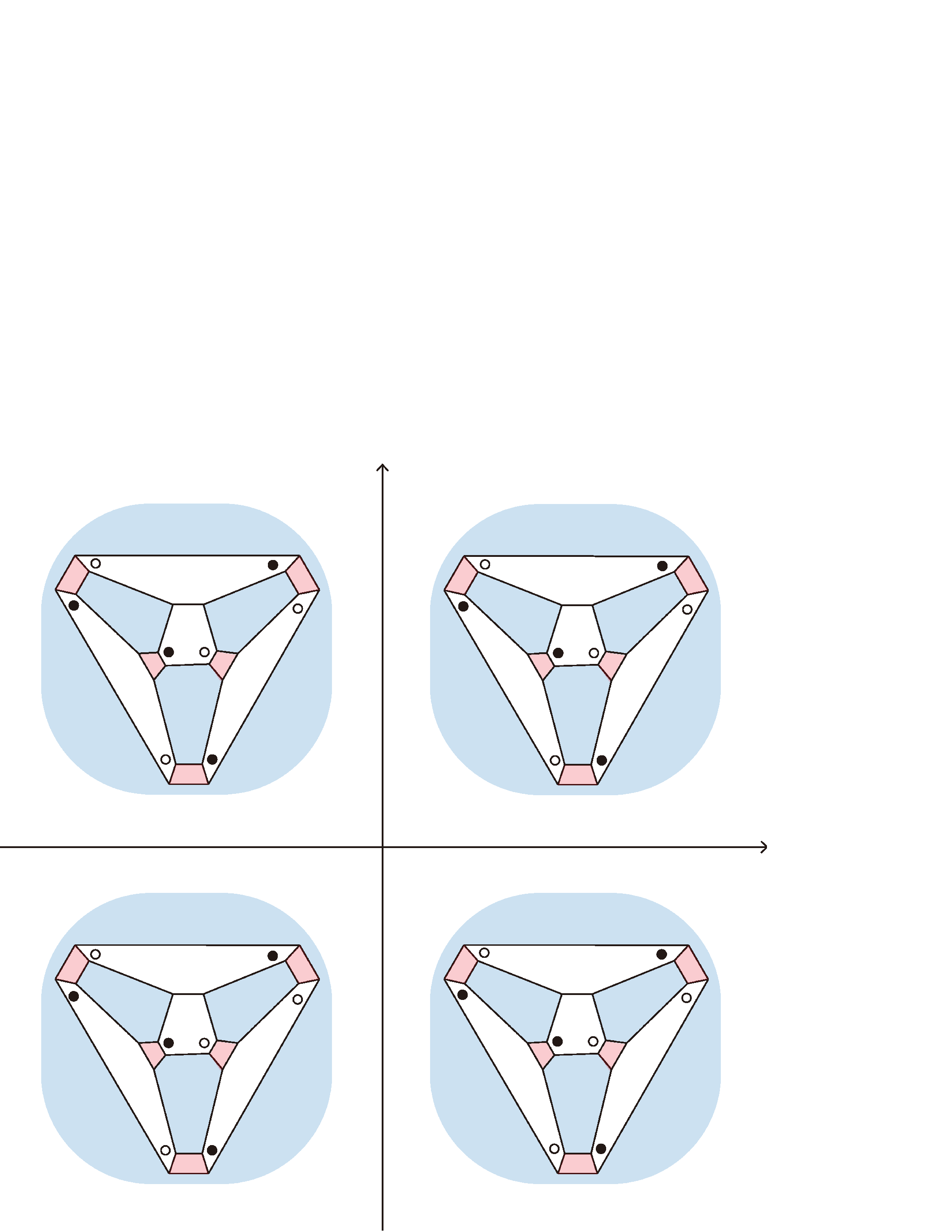}
\begin{picture}(400,0)(0,0)
\put(139,203){$1$}
\put(122,173){$2$}
\put(157,173){$3$}
\put(139,187){$4$}

\put(139,87){$7$}
\put(121,59){$8$}
\put(157,59){$3$}
\put(139,72){$4$}

\put(254,203){$1$}
\put(237,173){$2$}
\put(272,173){$5$}
\put(254,187){$6$}

\put(254,87){$7$}
\put(237,59){$8$}
\put(272,59){$5$}
\put(254,72){$6$}

\put(300,217){$M_{C/\iota}^{++}$}
\put(300,27){$M_{C/\iota}^{+-}$}
\put(70,217){$M_{C/\iota}^{-+}$}
\put(70,27){$M_{C/\iota}^{--}$}

\put(310,116){$u_1$}
\put(205,237){$u_2$}
\end{picture}
\caption{The decomposition $\pi^{-1}(C)=M_{C/\iota}^{++} \cup M_{C/\iota}^{+-} \cup M_{C/\iota}^{-+} \cup M_{C/\iota}^{--}$.}
\label{figure:type2_four_octahedra}
\end{figure}
Glue together the two polyhedra $M_{C/\iota}^{++}$ and $M_{C/\iota}^{-+}$ along the faces assigned $1$, 
and glue together $M_{C/\iota}^{+-}$ and $M_{C/\iota}^{- -}$ along the faces assigned $7$, according to Figure \ref{figure:type2_four_octahedra}. 
Then, we get the two polyhedra $M_{C/\iota}^{++} \cup M_{C/\iota}^{-+}$ and $M_{C/\iota}^{--} \cup M_{C/\iota}^{+-}$ shown in Figure \ref{figure:type2_two_octahedra_glued}. 
\begin{figure}[htbp]
\centering\includegraphics[width=13cm]{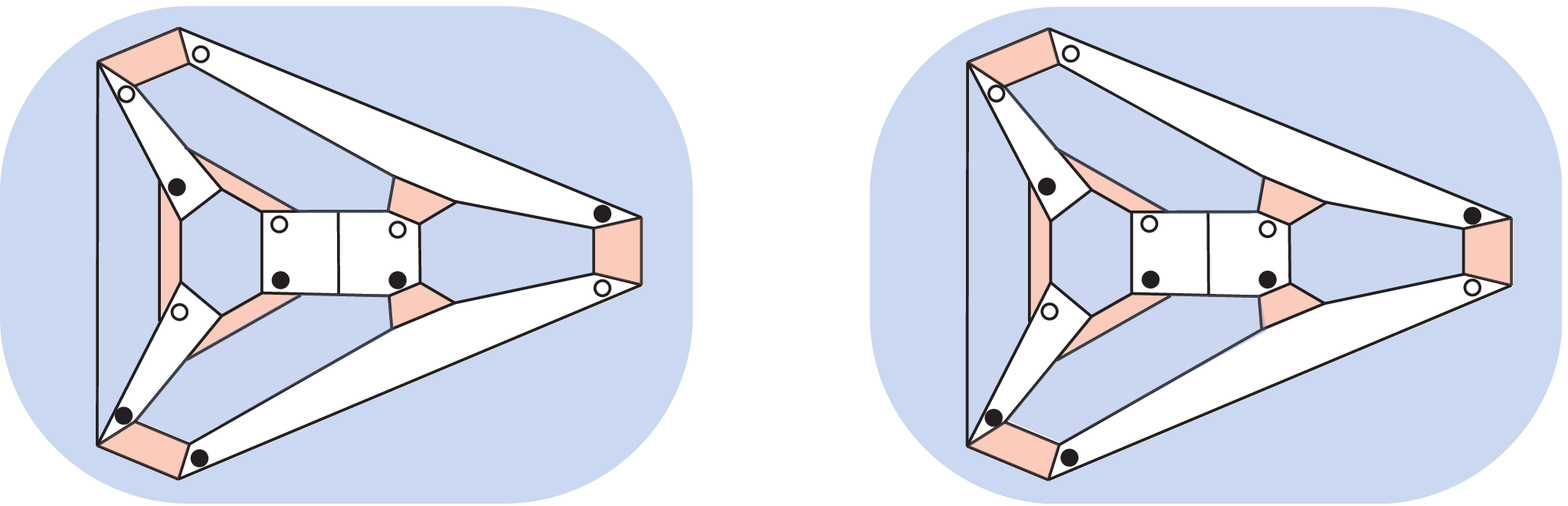}
\begin{picture}(400,0)(0,0)

\put(51,45){$2$}
\put(112,47){$2$}
\put(112,93){$3$}
\put(100,69){$4$}
\put(85,68){$6$}
\put(51,93){$5$}

\put(257,45){$8$}
\put(318,47){$8$}
\put(318,93){$3$}
\put(306,69){$4$}
\put(291,69){$6$}
\put(257,93){$5$}

\end{picture}
\caption{The polyhedra $M_{C/\iota}^{++} \cup M_{C/\iota}^{-+}$ (left) and $M_{C/\iota}^{+-} \cup M_{C/\iota}^{--}$ (right).}
\label{figure:type2_two_octahedra_glued}
\end{figure}
Furthermore, gluing together these two big polyhedra $M_{C/\iota}^{++} \cup M_{C/\iota}^{-+}$ and $M_{C/\iota}^{+-} \cup M_{C/\iota}^{--}$ 
along the faces assigned 4 and 6, which are adjacent each other, we get a single polyhedron shown in Figure \ref{figure:type2_decomposition} 
after renumbering the faces, which is a truncated ideal regular hyperbolic cuboctahedron.

The resulting 3-manifold $\pi^{-1}(C)$ by the gluing is again a genus-$4$ handlebody. 
From the way of gluing, it is easily checked that 
on the boundary of this genus-$4$ handlebody, there exist 
two three-holed spheres, each of which consists of a single face of this cuboctahedron, 
and two four-holed spheres, 
each of which consists of two faces of this cuboctahedron as shown on the right in Figure \ref{figure:type2_decomposition}.  
\end{proof}

\begin{lemma}
\label{lem:type-4}
Let $c$ be a vertex of $X_P$ of Type $4$-$2$. 
Then $\pi^{-1}(C)$ is a genus-$3$ handlebody obtained by gluing the faces of two truncated ideal regular hyperbolic octahedra 
as shown in Figure $\ref{figure:type4_decomposition}$. 
On the boundary of this genus-$3$ handlebody, there exist 
two three-holed spheres, each of which consists of two face of these octahedra, 
and a single four-holed spheres, 
which consists of four faces.  
\end{lemma}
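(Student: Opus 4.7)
The plan is to mirror the strategy used in Lemmas \ref{lem:type-1} and \ref{lem:type-2}, exploiting the identification in Figure \ref{figure:identify_3thickening_polyhedron} that realizes $M_{C/\iota}$ for a Type $4$-$2$ vertex as \emph{half} of a truncated ideal regular hyperbolic octahedron, cut along a plane of symmetry. My first step will be to carefully label the faces of this half-octahedron according to the classification introduced earlier: the bisecting face together with the face orthogonal to the edge connecting $c$ to the endpoint of $P$ will be the mirror faces of $M_{C/\iota}$; the faces corresponding to the endpoint of $P$ will be cusp faces; and the remaining faces will be the gluing faces.

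Next I will apply Turaev's reconstruction as outlined before Lemma \ref{lem:type-1}, writing
\[
\pi^{-1}(C) \;=\; M_{C/\iota}^{++} \cup M_{C/\iota}^{+-} \cup M_{C/\iota}^{-+} \cup M_{C/\iota}^{--},
\]
consisting of four copies of the half-octahedron glued together along their mirror faces. The key observation is that gluing along the bisecting mirror face combines the four halves pairwise into two full truncated ideal regular hyperbolic octahedra; say $M_{C/\iota}^{++} \cup M_{C/\iota}^{-+}$ forms one and $M_{C/\iota}^{+-} \cup M_{C/\iota}^{--}$ forms the other. The remaining mirror-face identifications glue these two truncated octahedra together along a pair of faces in the pattern drawn on the left of Figure \ref{figure:type4_decomposition}.

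Once the decomposition into two truncated octahedra is in hand, I will check that the underlying $3$-manifold is a genus-$3$ handlebody. The quickest route is to view $\pi^{-1}(C)$ directly as the $3$-thickening of the local model of Type $4$-$2$ in $X_P$, whose spine deformation-retracts to a wedge of three circles; alternatively one can compute $\chi$ and check that the boundary is connected after removing the toroidal cusp cross-sections. I will then read off the boundary structure by tracking which faces of the two truncated octahedra remain as gluing faces and identifying their grouping through the gluing map, producing the two three-holed spheres (two faces each) and the single four-holed sphere (four faces) asserted in the lemma, matching the picture on the right of Figure \ref{figure:type4_decomposition}.

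The main obstacle I anticipate is the bookkeeping of face identifications: determining which of the mirror faces of the half-octahedra are "bisecting" and which are the genuine Turaev-mirrors from Type $1$, and, after the reconstruction, verifying that the gluing faces really cluster into exactly two three-holed spheres and one four-holed sphere rather than, say, a single six-holed sphere. This combinatorial check is where one must be most careful, but it is a finite verification made tractable by the high degree of symmetry of the half-octahedron and by the guidance of Figure \ref{figure:type4_decomposition}.
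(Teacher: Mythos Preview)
Your proposal is correct and follows essentially the same route as the paper: identify $M_{C/\iota}$ with a half of a truncated ideal regular octahedron, form the four-piece decomposition $\pi^{-1}(C)=M_{C/\iota}^{++}\cup M_{C/\iota}^{+-}\cup M_{C/\iota}^{-+}\cup M_{C/\iota}^{--}$, glue $M_{C/\iota}^{++}\cup M_{C/\iota}^{-+}$ and $M_{C/\iota}^{+-}\cup M_{C/\iota}^{--}$ along the bisecting mirror faces to obtain two full truncated octahedra, and then read off the genus and the boundary pattern. The paper carries this out via explicit face labels from the figures rather than a spine or Euler-characteristic argument, but the substance is the same.
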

\begin{figure}[htbp]
\centering\includegraphics[width=14cm]{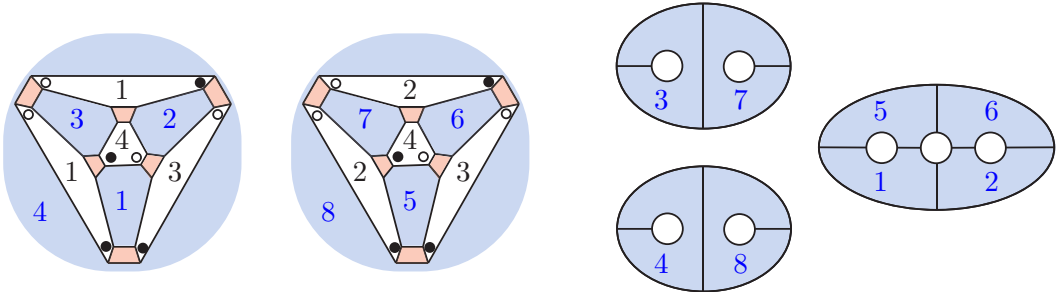}
\begin{picture}(400,0)(0,0)

\put(43,86){$1$}
\put(24,55){$1$}
\put(63,55){$3$}
\put(43,67){$4$}

\put(43,44){\color{blue}$1$}
\put(61,75){\color{blue}$2$}
\put(26,75){\color{blue}$3$}
\put(12,40){\color{blue}$4$}

\put(152,86){$2$}
\put(133,55){$2$}
\put(172,55){$3$}
\put(152,67){$4$}

\put(152,44){\color{blue}$5$}
\put(170,75){\color{blue}$6$}
\put(135,75){\color{blue}$7$}
\put(121,40){\color{blue}$8$}

\put(247,82){\color{blue}$3$}
\put(277,82){\color{blue}$7$}

\put(247,20){\color{blue}$4$}
\put(277,20){\color{blue}$8$}

\put(330,51){\color{blue}$1$}
\put(372,51){\color{blue}$2$}

\put(330,78){\color{blue}$5$}
\put(372,78){\color{blue}$6$}
\end{picture}
\caption{A polyhedral decomposition of $\pi^{-1}(C)$ for a Type $4$-$2$ vertex $c$.}
\label{figure:type4_decomposition}
\end{figure}
\begin{proof}

In this case, $M_{C/\iota}$ is the half of a truncated ideal regular hyperbolic octahedron shown in Figure \ref{figure:identify_3thickening_polyhedron}, 
and the decomposition $\pi^{-1}(C) = M_{C / \iota}^{++} \cup M_{C / \iota}^{+-} \cup M_{C / \iota}^{-+} \cup M_{C / \iota}^{--}$ 
is as in Figure \ref{figure:type4_four_octahedra}.
\begin{figure}[htbp]
\centering\includegraphics[width=6cm]{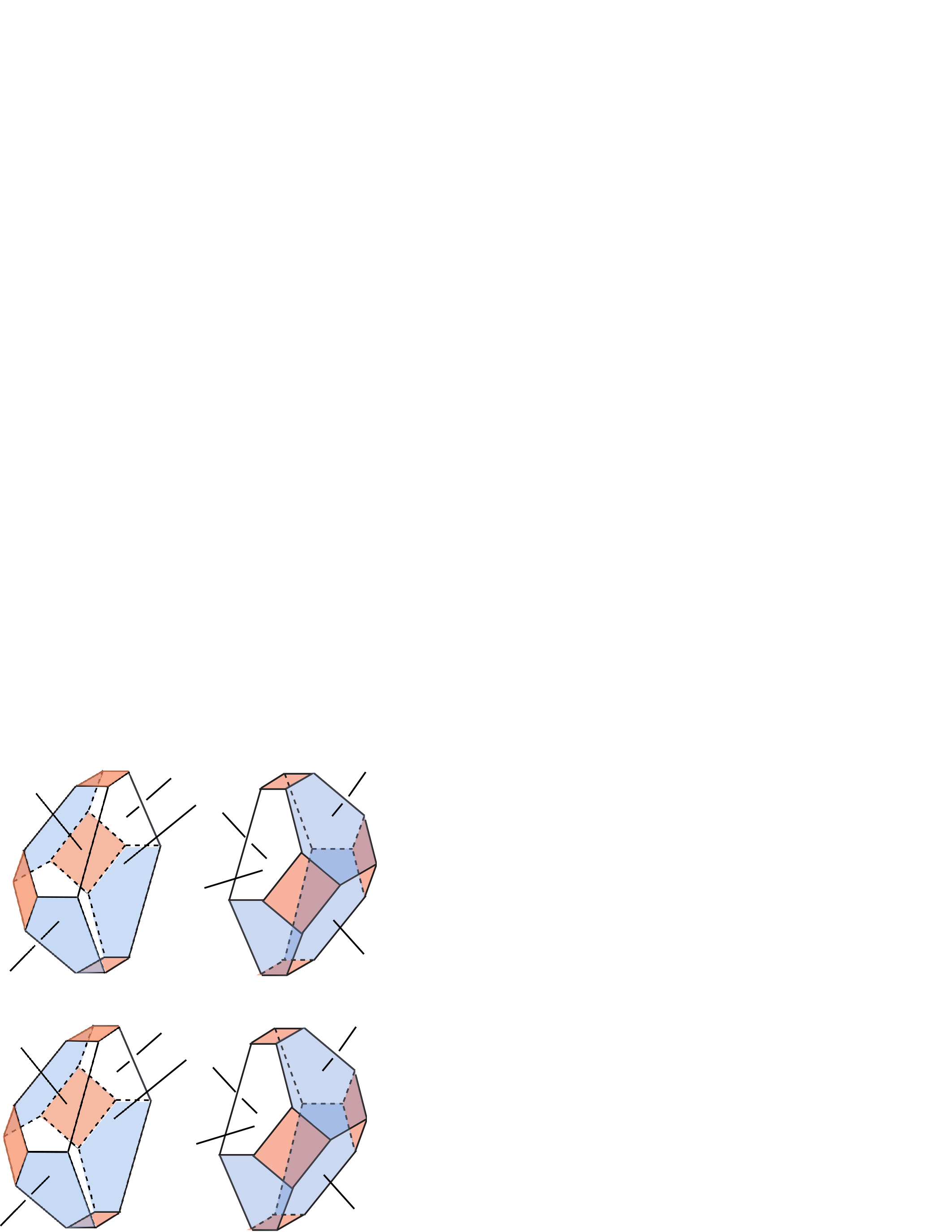}
\begin{picture}(400,0)(0,0)
\put(207,203){$1$}
\put(113,124){$2$}
\put(195,216){$3$}
\put(122,210){$4$}
\put(202,88){$7$}
\put(107,10){$8$}
\put(192,102){$3$}
\put(114,96){$4$}

\put(282,127){$2$}
\put(200,165){$6$}
\put(280,222){$5$}

\put(195,45){$6$}
\put(280,107){$5$}
\put(277,15){$8$}

\put(300,217){$M_{C/\iota}^{++}$}
\put(300,20){$M_{C/\iota}^{+-}$}
\put(70,217){$M_{C/\iota}^{-+}$}
\put(70,20){$M_{C/\iota}^{--}$}

\end{picture}

\caption{The decomposition $\pi^{-1}(C)=M_{C/\iota}^{++} \cup M_{C/\iota}^{+-} \cup M_{C/\iota}^{-+} \cup M_{C/\iota}^{--}$.}
\label{figure:type4_four_octahedra}
\end{figure}
Glue together the two polyhedra $M_{C/\iota}^{++}$ and $M_{C/\iota}^{-+}$ along the faces assigned $1$, 
and glue together $M_{C/\iota}^{--}$ and $M_{C/\iota}^{+ -}$ along the faces assigned $7$, according to Figure \ref{figure:type4_four_octahedra}. 
Then, both of $M_{C/\iota}^{++} \cup M_{C/\iota}^{-+}$ and $M_{C/\iota}^{+-} \cup M_{C/\iota}^{--}$ are truncated ideal regular hyperbolic octahedra as shown in Figure \ref{figure:type4_two_octahedra_glued}. 
\begin{figure}[htbp]
\centering\includegraphics[width=10cm]{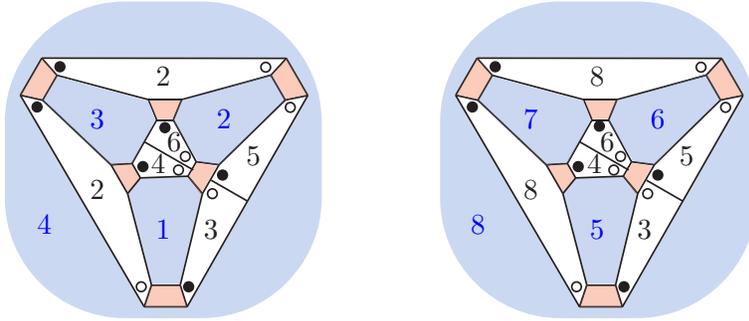}
\begin{picture}(400,0)(0,0)

\put(115,101){$2$}
\put(90,58){$2$}
\put(133,43){$3$}
\put(149,71){$5$}
\put(113,68){$4$}
\put(119,76){$6$}

\put(115,43){\color{blue}$1$}
\put(138,85){\color{blue}$2$}
\put(90,85){\color{blue}$3$}
\put(70,45){\color{blue}$4$}

\put(279,101){$8$}
\put(254,58){$8$}
\put(297,43){$3$}
\put(313,71){$5$}
\put(278,68){$4$}
\put(283,76){$6$}

\put(279,43){\color{blue}$5$}
\put(302,85){\color{blue}$6$}
\put(254,85){\color{blue}$7$}
\put(234,45){\color{blue}$8$}

\end{picture}
\caption{The polyhedra $M_{C/\iota}^{++} \cup M_{C/\iota}^{-+}$ (left) and $M_{C/\iota}^{+-} \cup M_{C/\iota}^{--}$ (right).}
\label{figure:type4_two_octahedra_glued}
\end{figure}
After renumbering the faces, we get a decomposition of $\pi^{-1}(C)$ as shown in 
Figure \ref{figure:type4_decomposition}.

The resulting 3-manifold $\pi^{-1}(C)$ by gluing the faces of these two truncated octahedra 
is a genus-$3$ handlebody. 
From the way of gluing, it is easily checked that 
on the boundary of this genus-$3$ handlebody, there exist 
two three-holed spheres, each of which consists of two faces of these truncated octahedra, 
and a single four-holed spheres, 
which consists of four faces as shown on the right in Figure \ref{figure:type4_decomposition}.  
\end{proof}

\begin{lemma}
\label{lem:type-5}
Let $c$ be a vertex of $X_P$ of Type $5$-$3$. 
Then $\pi^{-1}(C)$ is a genus-$2$ handlebody obtained by gluing the faces of a single truncated ideal regular hyperbolic octahedron 
as shown in Figure $\ref{figure:type5_decomposition}$. 
This genus-$2$ handlebody has two three-holed spheres on the boundary, 
each of which consists of two faces of this truncated octahedron.
\end{lemma}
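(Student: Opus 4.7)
The plan is to follow the same strategy as in the proofs of Lemmas \ref{lem:type-1}, \ref{lem:type-2} and \ref{lem:type-4}. Since $c$ is of Type $5$-$3$, by the discussion preceding Lemma \ref{lem:type-1}, the quotient $M_{C/\iota}$ is a quarter of a truncated ideal regular hyperbolic octahedron, namely the piece obtained from a truncated ideal regular hyperbolic octahedron by cutting it along two mutually perpendicular totally-geodesic planes of symmetry corresponding to the two involutions coming from shrinking an edge at an endpoint of $P$. I would first draw this quarter explicitly, identifying its gluing face, cusp face and three mirror faces (two coming from the two symmetry planes, and one coming from the involution $\iota$), so that the subsequent gluings can be read off combinatorially.

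Next, I would describe the decomposition
\[
\pi^{-1}(C) = M_{C/\iota}^{++} \cup M_{C/\iota}^{+-} \cup M_{C/\iota}^{-+} \cup M_{C/\iota}^{--},
\]
where the four copies correspond to the four sign choices $(\mathrm{sgn}\, u_1, \mathrm{sgn}\, u_2)$. The mirror faces will be paired as follows. First I would glue $M_{C/\iota}^{++}$ with $M_{C/\iota}^{-+}$ and $M_{C/\iota}^{+-}$ with $M_{C/\iota}^{--}$ along the mirror face coming from $\iota$; each union is then a half-octahedron (the same piece that appeared in the Type $4$-$2$ analysis). Next I would glue these two half-octahedra along the remaining pair of mirror faces coming from the endpoint of $P$, obtaining a single truncated ideal regular hyperbolic octahedron. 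Relabeling the faces as in Figure \ref{figure:type5_decomposition} then gives the desired combinatorial description.

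Finally, I would verify the topological claims. The resulting space $\pi^{-1}(C)$ is the quotient of a $3$-ball (the truncated octahedron) by the identifications on its non-cusp, non-truncation faces prescribed above; counting, these identifications pair up faces so that the resulting handlebody has genus $2$ (which can be checked either by an Euler characteristic computation $\chi = -2$ together with connectedness and the fact that the only identifications are in pairs of disks, or by explicitly tracing a spine). From the pairing of faces inherited from the truncation of the octahedron, the cusp part of $\partial \pi^{-1}(C)$ splits into exactly two three-holed spheres, each formed by two of the triangular truncation faces, matching the picture on the right in Figure \ref{figure:type5_decomposition}.

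The main obstacle, as in the previous lemmas, is purely bookkeeping: tracking how the gluing, cusp and mirror faces of the four copies $M_{C/\iota}^{\pm \pm}$ are matched so that the two successive gluings really assemble into a single truncated regular octahedron with the claimed boundary pattern. Once this matching is drawn out carefully (as in Figures \ref{figure:type4_four_octahedra} and \ref{figure:type4_two_octahedra_glued} for the Type $4$-$2$ case), both the identification of $\pi^{-1}(C)$ as a genus-$2$ handlebody and the structure of its two three-holed spheres follow immediately.
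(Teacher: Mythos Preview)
Your proposal is correct and follows essentially the same approach as the paper: start from the four quarter-octahedra $M_{C/\iota}^{\pm\pm}$, glue them in pairs $(++,-+)$ and $(+-,--)$ to obtain two half-octahedra, then glue those along the remaining adjacent mirror faces to obtain a single truncated ideal regular hyperbolic octahedron, and finally read off the genus-$2$ handlebody structure and the two three-holed boundary spheres. One small terminological slip: in the paper's conventions the first pairing is along the mirror face from the Turaev-reconstruction doubling rather than the one coming from $\iota$, but this does not affect the argument.
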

\begin{figure}[htbp]
\centering\includegraphics[width=11cm]{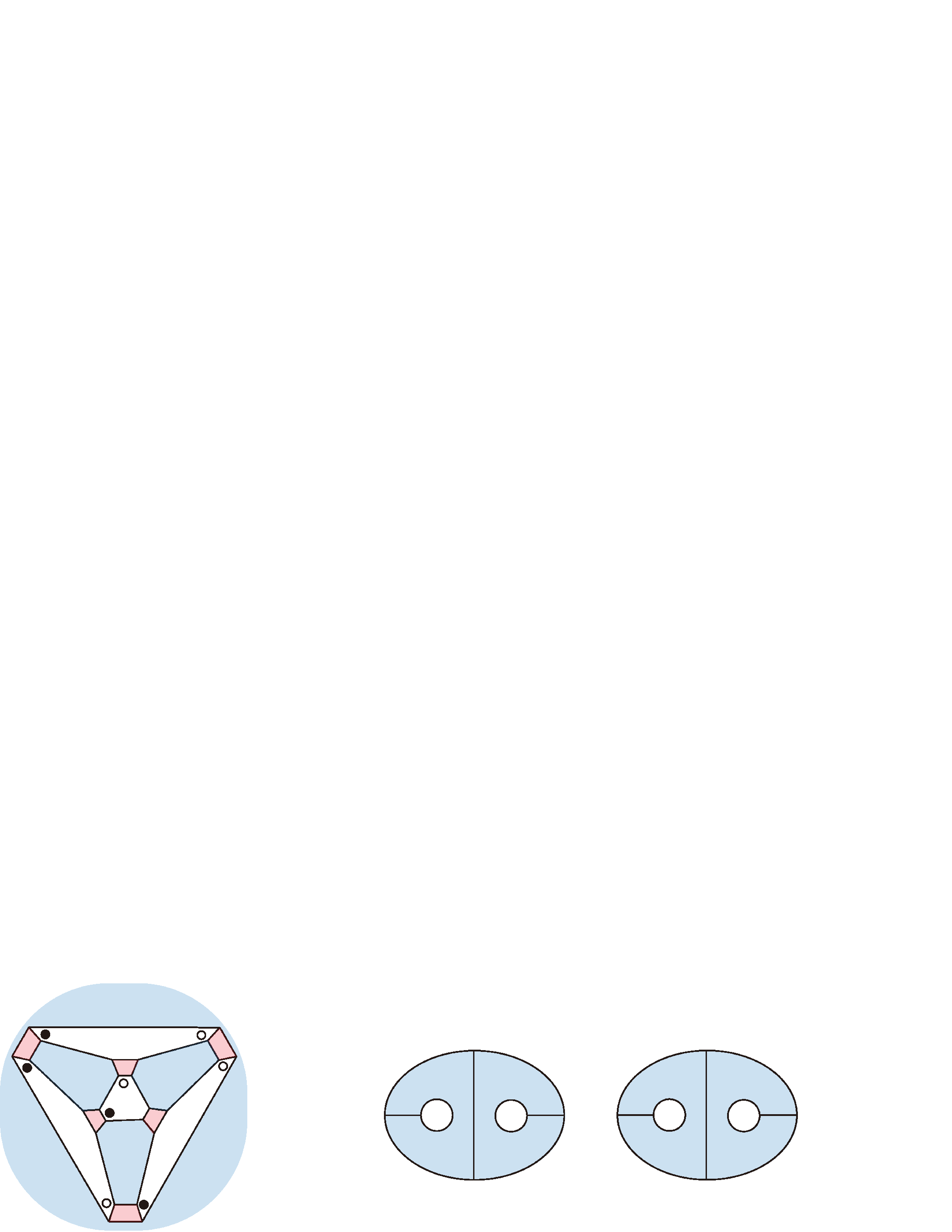}
\begin{picture}(400,0)(0,0)

\put(90,83){$1$}
\put(70,48){$1$}
\put(109,48){$2$}
\put(90,60){$2$}

\put(90,35){\color{blue}$1$}
\put(110,72){\color{blue}$2$}
\put(68,72){\color{blue}$3$}
\put(55,30){\color{blue}$4$}

\put(211,40){\color{blue}$1$}
\put(241,40){\color{blue}$2$}

\put(303,40){\color{blue}$3$}
\put(333,40){\color{blue}$4$}
\end{picture}
\caption{A polyhedral decomposition of $\pi^{-1}(C)$ for a Type $5$-$3$ vertex $c$.}
\label{figure:type5_decomposition}
\end{figure}

\begin{proof}

In this case, $M_{C/\iota}$ is the quater of a truncated ideal regular hyperbolic octahedron shown in Figure \ref{figure:identify_3thickening_polyhedron}, 
and the decomposition $\pi^{-1}(C) = M_{C / \iota}^{++} \cup M_{C / \iota}^{+-} \cup M_{C / \iota}^{-+} \cup M_{C / \iota}^{--}$ 
is as in Figure \ref{figure:type5_four_octahedra}. 
\begin{figure}[htbp]
\centering\includegraphics[width=6cm]{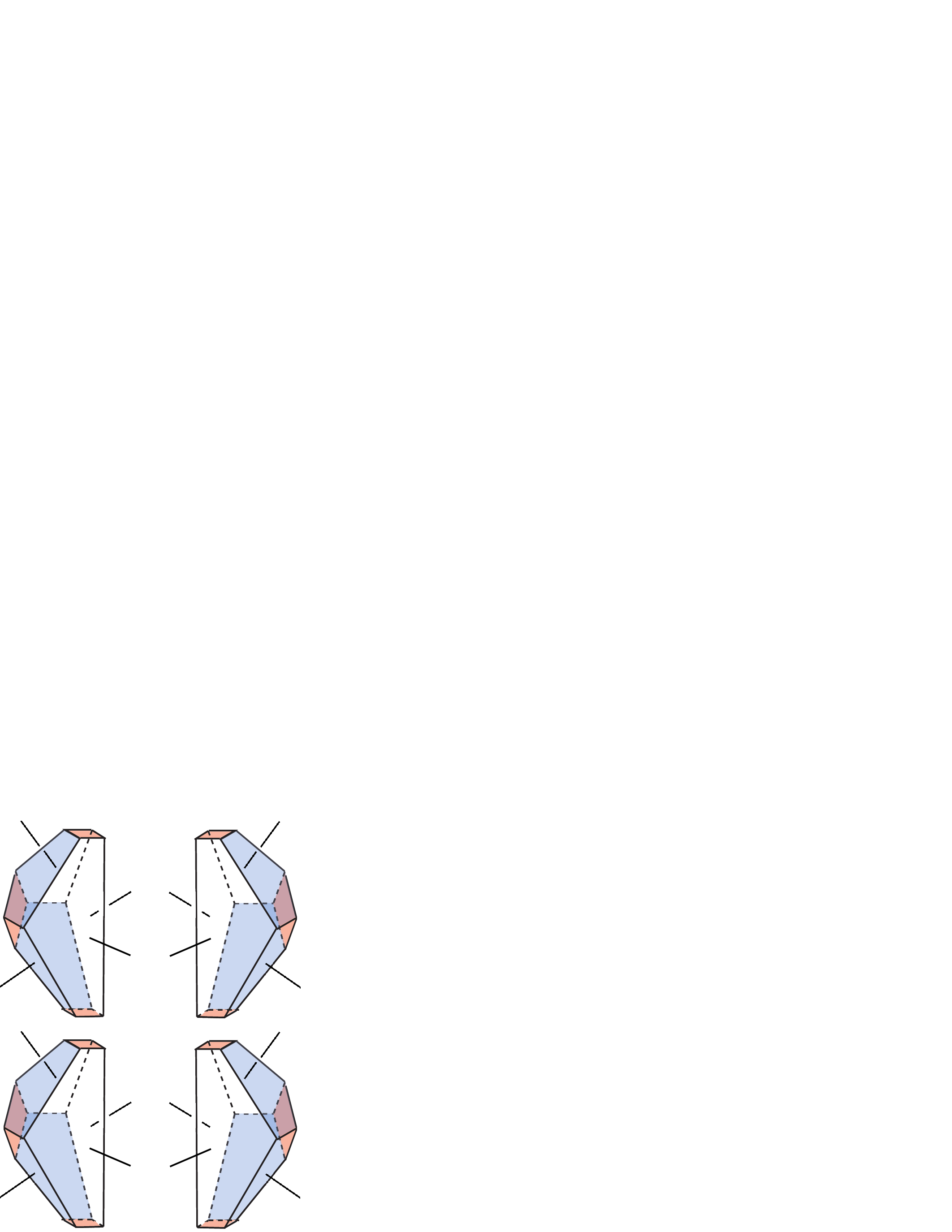}
\begin{picture}(400,0)(0,0)
\put(120,245){$3$}
\put(105,150){$2$}
\put(192,165){$4$}
\put(197,205){$1$}

\put(120,125){$3$}
\put(105,25){$8$}
\put(192,48){$4$}
\put(195,85){$7$}

\put(275,245){$5$}
\put(285,145){$2$}
\put(205,165){$6$}

\put(275,125){$5$}
\put(290,25){$8$}
\put(205,48){$6$}

\put(300,217){$M_{C/\iota}^{++}$}
\put(300,20){$M_{C/\iota}^{+-}$}
\put(70,217){$M_{C/\iota}^{-+}$}
\put(70,20){$M_{C/\iota}^{--}$}

\end{picture}
\caption{The decomposition $\pi^{-1}(C)=M_{C/\iota}^{++} \cup M_{C/\iota}^{+-} \cup M_{C/\iota}^{-+} \cup M_{C/\iota}^{--}$.}
\label{figure:type5_four_octahedra}
\end{figure}
Glue together the two polyhedra $M_{C/\iota}^{++}$ and $M_{C/\iota}^{-+}$ along the faces assigned $1$, 
and glue together $M_{C/\iota}^{+-}$ and $M_{C/\iota}^{--}$ along the faces assigned $7$, according to Figure \ref{figure:type5_four_octahedra}. 
Then, we get the two polyhedra $M_{C/\iota}^{++} \cup M_{C/\iota}^{-+}$ and $M_{C/\iota}^{+-} \cup M_{C/\iota}^{--}$ shown in Figure \ref{figure:type5_two_octahedra_glued}. 
\begin{figure}[htbp]
\centering\includegraphics[width=7cm]{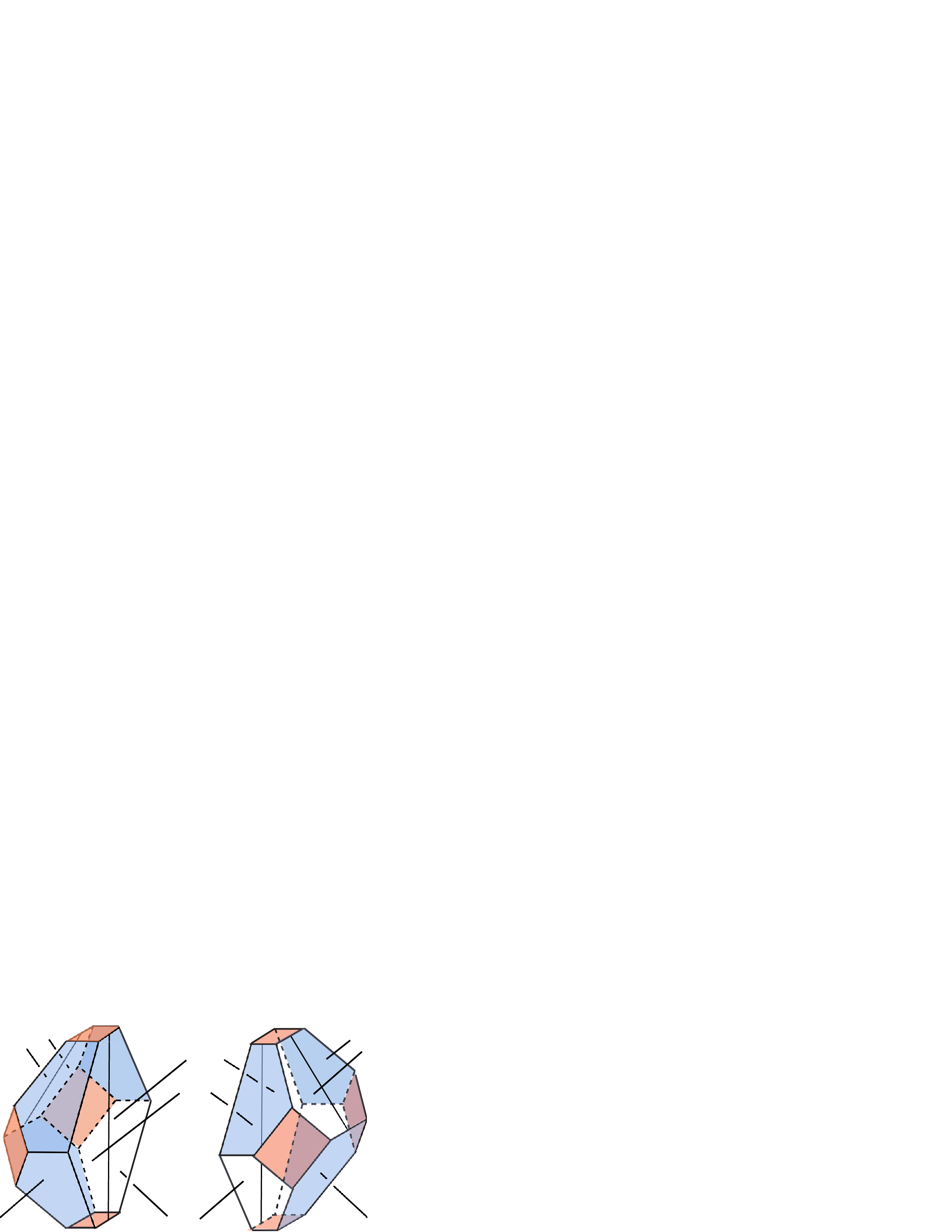}
\begin{picture}(400,0)(0,0)
\put(105,115){$3$}
\put(123,123){$5$}
\put(95,15){$2$}

\put(208,105){$6$}
\put(203,85){$4$}
\put(193,15){$2$}

\put(290,120){$5$}
\put(300,110){$3$}
\put(203,15){$8$}
\put(305,15){$8$}

\end{picture}
\caption{The polyhedra $M_{C/\iota}^{++} \cup M_{C/\iota}^{-+}$ (left) and $M_{C/\iota}^{+-} \cup M_{C/\iota}^{--}$ (right).}
\label{figure:type5_two_octahedra_glued}
\end{figure}
Furthermore, glue together these two big polyhedra $M_{C/\iota}^{++} \cup M_{C/\iota}^{-+}$ and $M_{C/\iota}^{+-} \cup M_{C/\iota}^{--}$ 
along the faces assigned $4$ and $6$, which are adjacent each other, to get a single polyhedron shown in Figure \ref{figure:type5_four_octahedra_glued}, 
which is a truncated ideal regular hyperbolic octahedron.
\begin{figure}[htbp]
\centering\includegraphics[width=5cm]{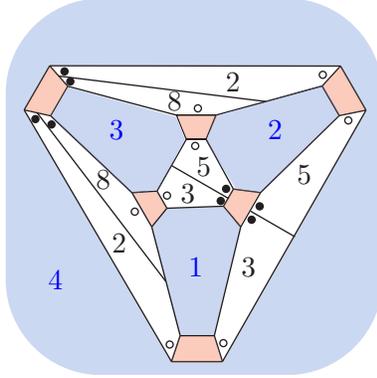}
\begin{picture}(400,0)(0,0)
\put(212,120){$2$}
\put(190,113){$8$}
\put(163,83){$8$}
\put(169,59){$2$}
\put(218,50){$3$}
\put(239,85){$5$}
\put(201,88){$5$}
\put(195,78){$3$}

\put(198,50){\color{blue} $1$}
\put(228,102){\color{blue} $2$}
\put(168,102){\color{blue} $3$}
\put(145,45){\color{blue} $4$}

\end{picture}
\caption{The polyhedron $(M_{C/\iota}^{++} \cup M_{C/\iota}^{-+}) \cup (M_{C/\iota}^{+-} \cup M_{C/\iota}^{--})$.}
\label{figure:type5_four_octahedra_glued}
\end{figure}

The resulting 3-manifold $\pi^{-1}(C)$ by the gluing in this case is a genus-$2$ handlebody. 
From the way of gluing, this genus-$2$ handlebody has 
two three-holed spheres on the boundary, 
each of which consists of two faces of this truncated octahedron as shown on the right in Figure \ref{figure:type5_decomposition}.
\end{proof}

For a vertex $c \in X_P$ of Type $3$, which is the final case, the situation is slightly more complicated. 
In fact, a direct construction of the decomposition using the preimage of $\pi$ as in 
Lemmas \ref{lem:type-1}--\ref{lem:type-5} seems to be hard. 
Instead, we give a decomposition of $\pi^{-1}(C)$ using 
a specific divide link as follows. 

\begin{lemma}
\label{lem:type-3}
Let $c$ be a vertex of $X_P$ of Type $3$. 
Then $\pi^{-1}(C)$ is a genus-$3$ handlebody obtained by gluing the faces of ten truncated ideal regular hyperbolic tetrahedra 
as shown in Figure $\ref{figure:ten_tetrahedra_for_A2}$. 
This genus-$3$ handlebody has four three-holed spheres on the boundary, 
each of which consists of two faces of these truncated tetrahedra.
\end{lemma}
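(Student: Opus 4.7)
The plan is to bypass the direct quotient-by-$\iota$ argument used for Lemmas~\ref{lem:type-1}--\ref{lem:type-5} and instead identify the local structure at a Type~$3$ vertex with the piece $A_2$ appearing in Costantino--Thurston's Theorem~\ref{prop:2v8+10v4}. The reason the direct approach is unattractive here is that at a Type~$3$ vertex no surrounding region is excluded and no edge of $P$ runs to an endpoint, so the involution $\iota$ acts on a full neighborhood of $c$ and the quotient $M_{C/\iota}$ does not collapse to any of the convenient truncated hyperbolic polyhedra displayed in Figure~\ref{figure:identify_3thickening_polyhedron}.

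First I would exhibit a concrete connected prime divide $P_0$ whose associated polyhedron $X_{P_0}$ has a unique vertex, which is of Type~$3$, and whose link $L_{P_0}$ is (or is closely related to) the link $L$ of Example~\ref{ex:link_10v4}. Since the topology of $\pi^{-1}(C)$ depends only on the local combinatorial data at $c$, once such a $P_0$ is produced, the decomposition of $\pi^{-1}(A_2)$ into ten truncated ideal regular hyperbolic tetrahedra given in Figure~\ref{figure:ten_tetrahedra_for_A2} transports directly to a decomposition of $\pi^{-1}(C)$ in the general case.

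Next I would verify the correspondence between the local piece $C$ at a Type~$3$ vertex and the shadow piece $A_2$. Applying the Ishikawa--Naoe construction \cite{IN20} to a neighborhood of $c$ yields a shadowed polyhedron, and the internal region of $X_P$ corresponding to an internal region of $P$ near $c$ carries a gleam computable from the tangent structure of the divide. I would check by a direct framing computation that this gleam equals $+1$, so that the local piece is precisely the piece $A_2$ of Figure~\ref{figure:piece_A2}. Once the identification $C \leftrightarrow A_2$ is in place, Theorem~\ref{prop:2v8+10v4} applied to $A_2$ yields the desired hyperbolic tetrahedral decomposition of $\pi^{-1}(C)$, and the boundary structure, namely four three-holed spheres each the union of two truncated-tetrahedron faces, is read off from the $A_2$-boundary pattern of Figure~\ref{figure:holed_spheres_for_A1}.

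The main obstacle I anticipate is the gleam calculation: one must justify carefully that the relevant region of $X_P$ adjacent to a Type~$3$ vertex always carries gleam $+1$, and that this value is independent of the global divide. This requires a local analysis of the normal $[-1,1]$-bundle on the region, where the contribution along $\partial \bar R$ is determined by how the tangent lines of $P$ rotate near $c$. Once this local invariance of the gleam is established, the remainder of the argument is a combinatorial bookkeeping step, matching face labels of the ten truncated tetrahedra to the prescribed four three-holed spheres on the boundary of the genus-$3$ handlebody $\pi^{-1}(C)$.
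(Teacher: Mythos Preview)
Your high-level strategy---reducing to the piece $A_2$ and invoking the tetrahedral decomposition of $\pi^{-1}(A_2)$ from Figure~\ref{figure:ten_tetrahedra_for_A2}---matches the paper's in spirit, but the route is genuinely different. The paper carries out your step~1 concretely: it takes the divide $P_0$ of Example~\ref{ex:divide}, whose unique double point is of Type~$3$, and then works entirely in dimension three. It shows, by cutting $E(L)$ along two three-holed spheres and regluing after a $2\pi$-twist, that $E(L)$ is homeomorphic to $E(L_{X_{P_0}})=M_{P_0}$; the ten-tetrahedron decomposition of $E(L)$ from Example~\ref{ex:link_10v4} then transports to $M_{P_0}$, and cutting $M_{P_0}$ along the preimages $Q_1,Q_2$ of two tripods $Y_1,Y_2\subset\Nbd(P_0;X_{P_0})$ recovers $\pi^{-1}(C)$ together with its face structure. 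No gleam or shadow modification is ever computed; the identification with $A_2$ happens only implicitly, at the level of the $3$-manifold and its totally geodesic three-holed spheres.

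Your proposed route through the Ishikawa--Naoe construction and a gleam computation could in principle succeed, but the specific gleam you single out is not the right one. In the local piece $C=\Nbd(c;X_P)$ at a Type~$3$ vertex, the two internal regions of $P$ adjacent to $c$ are \emph{not} closed in $C$: each meets $\partial C$ along cusp parts, so neither is an internal region of the polyhedron in the sense needed to assign a gleam, and neither can be the closed bigon of $A_2$. The bigon of $A_2$, if it appears at all, must be a new region created by the desingularization of the non-simple point $c$ (where the disk $D$ and the two tangent wings all meet), and it is the gleam of \emph{that} region you would need to compute. You would also need to check the full combinatorial match---two simple vertices, one bigon, four tripod boundary components---and to argue that the Ishikawa--Naoe modification is realized inside $W$ so that the preimage under the new projection coincides with $\pi^{-1}(C)$. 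These are all plausible, but none of them reduces to computing the gleam of an internal region of $P$ as you wrote. The paper's $3$-dimensional argument sidesteps all of this by never leaving the link-complement picture.
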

\begin{proof}
Let $P_0$ be the divide in Example \ref{ex:divide}. 
Recall that the associated links $L_{P_0}$, $L_{X_{P_0}}$ and $L_{\widehat{X_{P_0}}}$ are as depicted in Figure \ref{figure:divide_l1_l2}. 

\begin{claim*}
\label{claim: exteriors of hatLP and L}
The link $L_{X_{P_0}}$ is hyperbolic, and the exterior of $L_{X_{P_0}}$, equivalently, 
the $3$-manifold $M_{P_0}$, 
can be decomposed into ten truncated ideal regular hyperbolic tetrahedra as shown in Figure $\ref{figure:ten_tetrahedra_for_L}$. 
Furthermore, each of the two three-holed spheres $Q_1$ and $Q_2$ shown on the left in Figure $\ref{figure:Y1_and_Y2}$ consists of two faces of 
that truncated tetrahedra. 
\end{claim*}
\begin{figure}[htbp]
\centering\includegraphics[width=14cm]{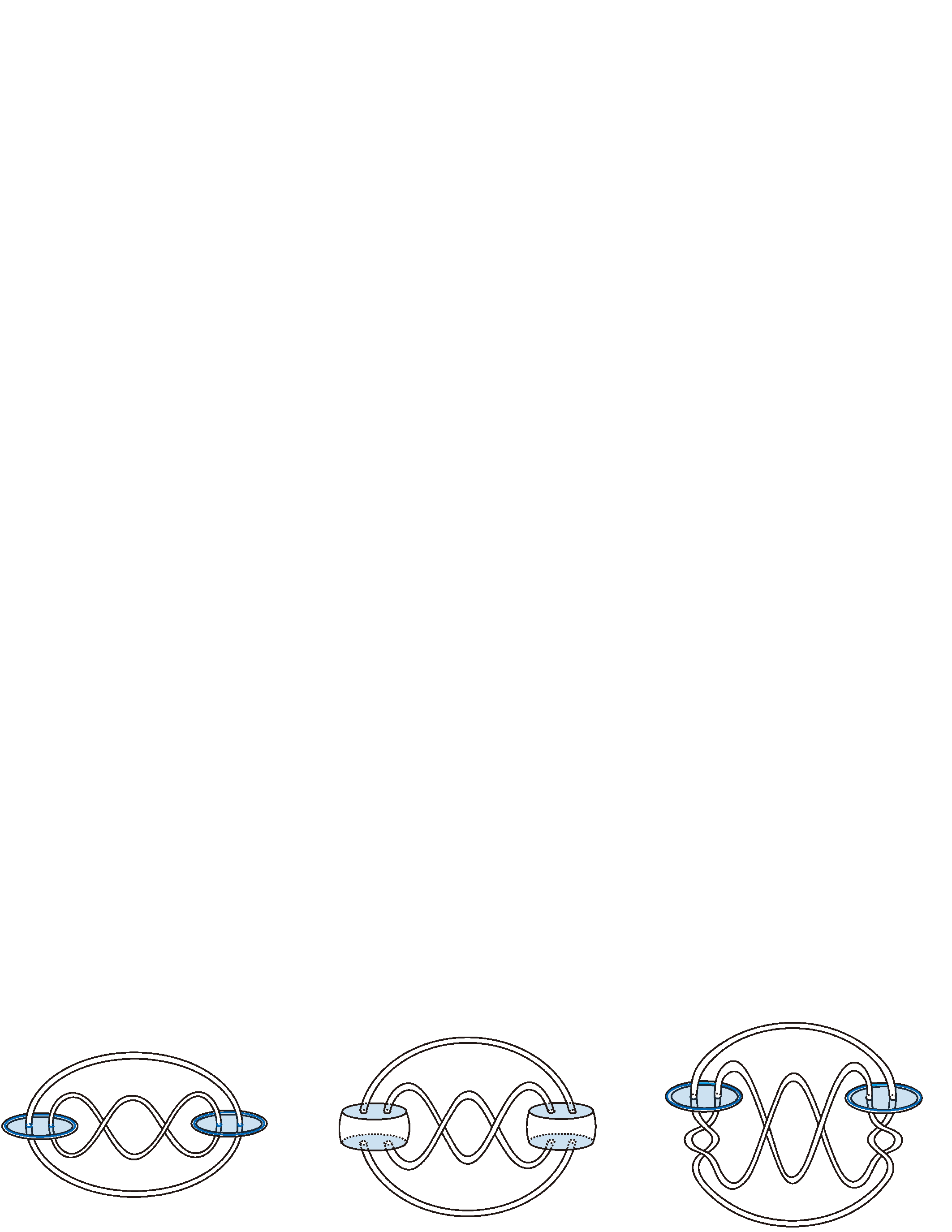}
\begin{picture}(400,0)(0,0)
\end{picture}
\caption{Cutting and twisting around disks.}
\label{figure:cutting_and_twisting}
\end{figure}
\begin{proof}[Proof of Claim]
Let $L$ be the link shown on the left in Figure \ref{figure:shadow_of_L}, the hyperbolic structure of whose exterior is explained in Example \ref{ex:link_10v4}.
We are going to show that the exterior of $L_{X_{P_0}}$ is homeomorphic to $E(L)$. 
Cut the exterior $E(L)$ along three-holed spheres as shown in Figure \ref{figure:cutting_and_twisting}, where the three-holed spheres are painted in blue, 
and then, reglue the them after twisting one side by $2 \pi$. 
The resulting space is clearly homeomorphic to $E(L)$, but this can be thought of as the exterior of another link $L'$ shown on the left in Figure \ref{figure:hatLP_and_its_twisting}.
\begin{figure}[htbp]
\centering\includegraphics[width=10cm]{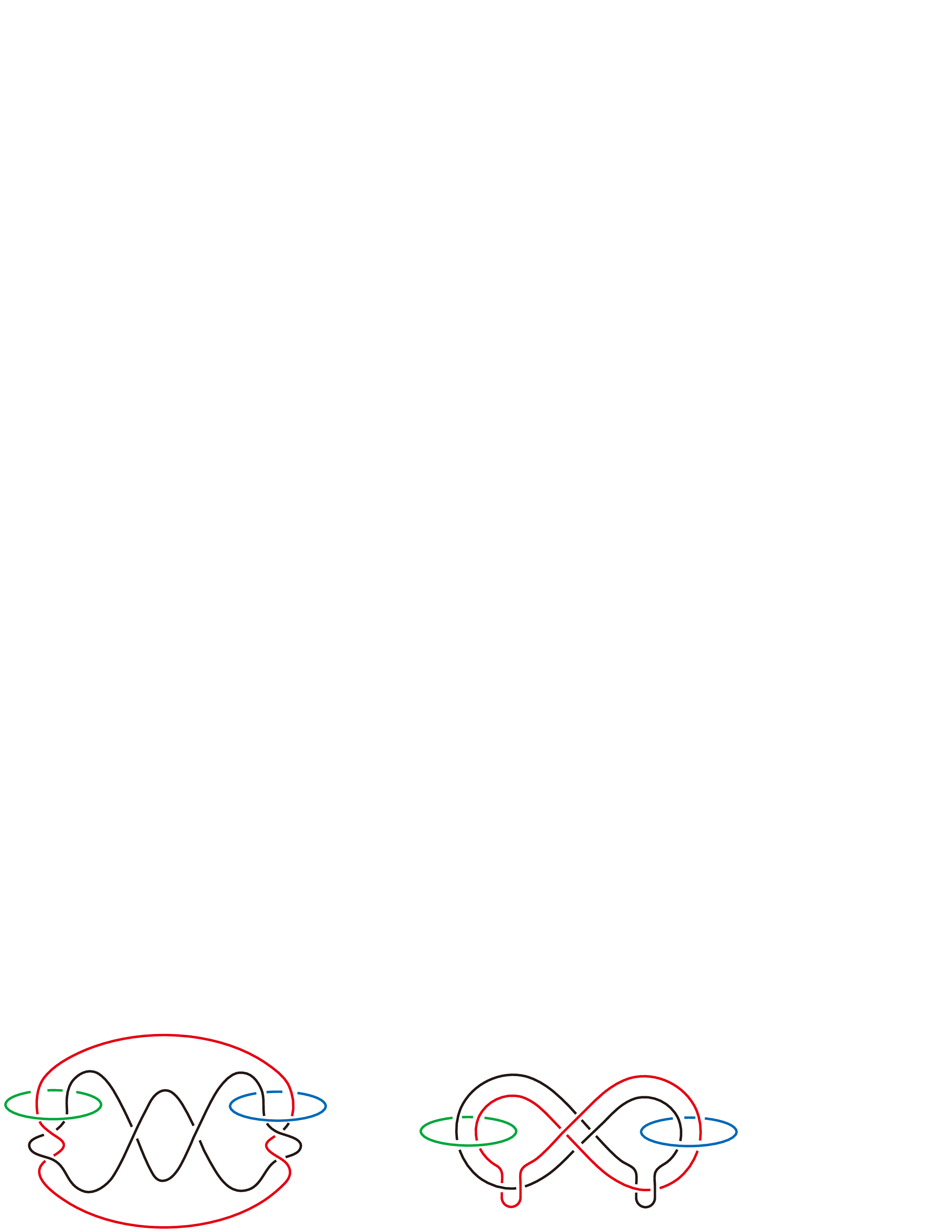}
\begin{picture}(400,0)(0,0)
\put(110,0){$L'$}
\put(197,50){$\approx$}
\put(222,0){The mirror image of $L_{X_{P_0}}$}
\end{picture}
\caption{The link $L'$ and the mirror image of $L_{X_{P_0}}$ are isotopic.}
\label{figure:hatLP_and_its_twisting}
\end{figure}
The link $L'$ can be moved by an isotopy to the one shown on the right in Figure \ref{figure:hatLP_and_its_twisting}. 
This is the mirror image of the diagram of $L_{X_{P_0}}$.  

As seen in Example \ref{ex:link_10v4} , $L$ is a hyperbolic link, and the exterior $E(L)$ of $L$ has a decomposition into ten truncated ideal regular hyperbolic tetrahedra 
as shown in Figure \ref{figure:ten_tetrahedra_for_L}. 
Moreover, each of the three-holed spheres in $E(L)$ taken in the above argument consists of $2$ faces of the truncated tetrahedra. 
Therefore, by the above procedure that gives the correspondence between $E(L)$ and $E(L_{X_{P_0}})$,  the latter assertion in Claim follows 
immediately. 
\end{proof}

Let $Y_1$, $Y_2$ be tripods in $\Nbd (P_0; X_{P_0} )$ shown in Figure \ref{figure:Y1_and_Y2}. 
Then both of the preimages $Q_i := \partial \pi^{-1} (Y_i)$ ($i=1,2$) are three-holed spheres. 
A regular neighborhood $\Nbd (c; X_{P_0})$ of $c$ is then obtained by cutting $\Nbd (P_0; X_{P_0})$ along $Y_1$ and $Y_2$. 
Thus, the $3$-manifold $\pi^{-1}(C)$ is obtained by cutting $M_{P_0}$ along $Q_1$ and $Q_2$. 
By the above Claim, 
$E (L_{X_{P_0}})$ has a decomposition into ten truncated ideal regular hyperbolic tetrahedra 
such that $Q_1$, $Q_2$ consist of their faces. 
Hence, this decomposition induces the decomposition of $\pi^{-1}(C)$ into the same truncated ideal regular hyperbolic tetrahedra, 
whose gluing information are shown in Figure \ref{figure:ten_tetrahedra_for_A2}. 

\begin{figure}[htbp]
\centering\includegraphics[width=10cm]{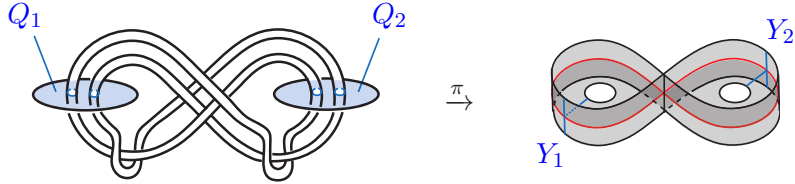}
\begin{picture}(400,0)(0,0)
\put(50,73){\color{blue} $Q_1$}
\put(188,73){\color{blue} $Q_2$}

\put(215,40){$\xrightarrow{\pi}$}

\put(250,20){\color{blue} $Y_1$}
\put(337,67){\color{blue} $Y_2$}
\end{picture}
\caption{The tripods $Y_1$, $Y_2$ and their preimages $Q_1$, $Q_2$.}
\label{figure:Y1_and_Y2}
\end{figure}

The resulting $3$-manifold by the gluing is a genus-$3$ handlebody having four three-holed spheres on the boundary, each of which consists of two faces of the truncated tetrahedra as shown in Figure \ref{figure:holed_spheres_for_A1}.
\end{proof}

\begin{proof}[Proof of Theorem $\ref{theo:main}$]
Let $c_1, c_2,\ldots ,c_n$ be the double points of $P$. 
From Lemmas \ref{lem:type-1}--\ref{lem:type-3}, 
for each vertex $c_i \in X_P$, the $3$-manifold $\pi^{-1}(C_i)$ is a handlebody equipped with 
a decomposition into truncated ideal regular hyperbolic polyhedra. 
Note that the sum of the dihedral angles of the hyperbolic polyhedra around each edge in the interior of each handlebody is $2 \pi$. 
For an edge on the boundary of $\pi^{-1}(C_i)$, the following holds. 
When the two sides of the edge on the boundary of $\pi^{-1}(C_i)$ are both cusp faces or both gluing faces, 
then exactly two face corners of the polyhedra meet there. 
When the one side of the edge on the boundary of $\pi^{-1}(C_i)$ is a cusp face and the other is a gluing face, 
then exactly one face corner of the polyhedra is there.  
 
Glue the handlebodies $\pi^{-1}(C_i)$ ($i=1, 2, \ldots, n $) together along the holed spheres on their boundaries 
according to the combinatorial structure of $X_P$ to get $M_P$. 
From the above note, the sums of the dihedral angles gathered on edges in the interior of $M_P$ are all $2 \pi$.
Furthermore, by pre-adjusting the scale of the facets (i.e., the height of the horosphere of the ideal vertex) of the 
truncated ideal regular hyperbolic polyhedra according to the Types of the corresponding vertices $c_1, c_2,\ldots ,c_n$ appropriately,  
we can assume that each two spheres with holes on the boundary of $\pi^{-1}(C_{1}), \pi^{-1}(C_{2}), \ldots , \pi^{-1}(C_{n})$ that are to be
glued together are actually isometric. 
Therefore, we get a tessellation of each boundary torus of $M_{P}$ by cusp faces of  $\pi^{-1}(C_{i})$ ($i=1, 2, \ldots, n $), 
which implies that each component of $\partial M_{P}$ is naturally equipped with an Euclidean structure.
Hence, the above decomposition directly gives a decomposition of $M_P$ into truncated ideal regular hyperbolic polyhedra. 
This induces a decomposition of $\Int ( M_P ) $ into ideal hyperbolic regular polyhedra. 
The volume formula in Theorem \ref{theo:main} now follows from Lemmas \ref{lem:type-1}--\ref{lem:type-3}. 
\end{proof}

\begin{corollary}
\label{cor:main}
Let $P \subset D$ be a connected prime divide with at least one double point. 
Let $n_1$, $n_2$, $n_3$, $n_4$ and $n_5$ be the number of its double points of 
Types $1$, $2$, $3$, $4$-$2$ and $5$-$3$, respectively.  
Then the hyperbolic volume of $L_P \subset S^3$ is less than 
\[ 10 n_3 v_{\mathrm{tet}} + (4n_1 + 2n_4 + n_5) v_{\mathrm{oct}} + n_2 v_{\mathrm{cuboct}}, \]
where the hyperbolic volume of a non-hyperbolic links is defined to be zero. 
\end{corollary}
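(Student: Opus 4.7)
The plan is to deduce Corollary \ref{cor:main} directly from Theorem \ref{theo:main} combined with Thurston's theorem on the monotonicity of hyperbolic volume under Dehn filling. Recall from the setup at the start of Section 2 that
\[
E(L_P) = M_P \cup \Bigl( \bigcup_{i=1}^{n} V_i \Bigr),
\]
where each $V_i = \pi^{-1}(R_i)$ is a solid torus attached to a toroidal boundary component of $M_P$. In other words, $E(L_P)$ is obtained from $M_P$ by Dehn filling all the boundary components that correspond to the internal regions $R_1, \ldots, R_n$ of $P$.

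First I would dispose of the degenerate case where $P$ has a vertex of Type $6$-$3$: the connectivity of $P$ forces $P$ to be the X-shape with that unique vertex, all of $n_1,\ldots, n_5$ vanish, the claimed bound equals $0$, and $L_P$ is the Hopf link by Theorem \ref{theo:main}, whose hyperbolic volume is $0$ by convention; the inequality of the corollary should be read in the obvious non-degenerate sense in this case. In the remaining case, Theorem \ref{theo:main} provides a complete finite-volume hyperbolic structure on $\Int M_P$ whose volume is exactly the right-hand side of the corollary.

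Next I would verify that the Dehn filling producing $E(L_P)$ from $M_P$ is genuinely nontrivial, i.e.\ that $n \geq 1$. Inspecting the local models in Figure \ref{figure:local_models_of_XP_prime}, one sees that each of Types $1, 2, 3, 4$-$2, 5$-$3$ exhibits at least one local region cut off from $\partial D$, so the presence of any one such double point forces the existence of at least one internal region of $P$. Now Thurston's theorem on the strict decrease of hyperbolic volume under nontrivial Dehn filling (equivalently, Gromov's strict monotonicity of simplicial volume under nontrivial filling, together with the convention that a non-hyperbolic link has hyperbolic volume zero) yields
\[
\vol(S^3 - L_P) < \vol(\Int M_P) = 10 n_3 v_{\mathrm{tet}} + (4n_1 + 2n_4 + n_5) v_{\mathrm{oct}} + n_2 v_{\mathrm{cuboct}},
\]
which is precisely the desired bound, whether $L_P$ itself is hyperbolic or not.

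The substantive work has already been carried out in proving Theorem \ref{theo:main}; the only mild issue to address is confirming $n \geq 1$ so that Thurston's strict inequality genuinely applies rather than just its non-strict form. This is the main (minor) obstacle, and it is resolved by the inspection of the local models noted above.
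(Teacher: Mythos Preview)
Your argument is correct and is exactly the reasoning the paper intends: the corollary is stated without proof as an ``immediate consequence'' of Theorem~\ref{theo:main}, and the implicit step is precisely that $E(L_P)$ is obtained from the hyperbolic manifold $M_P$ by nontrivial Dehn filling, so Thurston's strict volume inequality applies. Your extra care in checking that at least one internal region exists (so the filling is genuinely nontrivial) and in flagging the degenerate Type~$6$-$3$ case---where the stated strict inequality would collapse to $0<0$---fills in details the paper leaves tacit.
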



\begin{example}
\label{ex:divide_link_v8}
Let $P_1$ be  the divide shown on the left top in Figure \ref{figure:divide_link_v8}. 
\begin{figure}[htbp]
\centering\includegraphics[width=10cm]{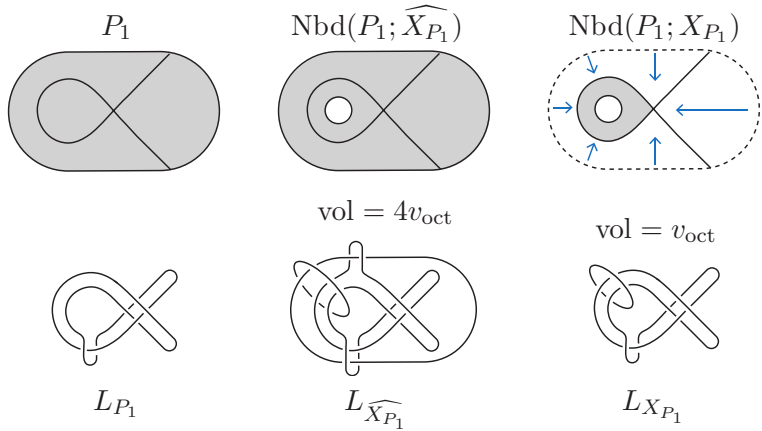}
\begin{picture}(400,0)(0,0)

\put(90,0){$L_{P_1}$}
\put(185,0){$L_{\widehat{X_{P_1}}}$}
\put(290,0){$L_{X_{P_1}}$}

\put(93,142){$P_1$}
\put(165,142){$\Nbd ( P_1 ; \widehat{X_{P_1}} )$}
\put(270,142){$\Nbd ( P_1 ; X_{P_1} )$}

\put(175,72){$\vol = 4 v_{\mathrm{oct}}$}
\put(280,65){$\vol = v_{\mathrm{oct}}$}
\end{picture}
\caption{The divide $P_1$ and the associated links $L_{P_1}$, $L_{\widehat{X_{P_1}}}$ and $L_{X_{P_1}}$.}
\label{figure:divide_link_v8}
\end{figure}
The link $L_{P_1}$ of $P_1$ is shown on the left bottom of the figure. 
The middle top of Figure \ref{figure:divide_link_v8} illustrates a simplified figure of the polyhedron $\Nbd ( P_1 ; \widehat{X_{P_1}} )$. 
Below that, a diagram of the link $L_{\widehat{X_{P_1}}}$ is drawn. 
Recall that the preimage $\pi^{-1} (\widehat{X_{P_1}})$ is homeomorphic to $E (L_{\widehat{X_{P_1}}})$. 
Since $\widehat{X_{P_1}}$ has a single vertex, which is of Type 1, $L_{\widehat{X_{P_1}}}$ is a hyperbolic link of volume $4 v_{\mathrm{oct}}$. 
The right top of Figure \ref{figure:divide_link_v8} is then a simplified figure of the polyhedron $\Nbd ( P_1 ; X_{P_1} )$, whose single vertex of Type $5$-$3$. 
Thus, by Theorem \ref{theo:main}, $\Int M_{P_1}$ has a hyperbolic manifold of volume $v_{\mathrm{oct}}$. 
In other words, the link $L_{X_{P_1}}$ shown on the right bottom in Figure \ref{figure:divide_link_v8} is a hyperbolic link of volume $v_{\mathrm{oct}}$.

\end{example}

\begin{example}
Let $P_2$ be  the divide shown on the left top in Figure \ref{figure:divide_link_2v48}. 
\begin{figure}[htbp]
\centering\includegraphics[width=10cm]{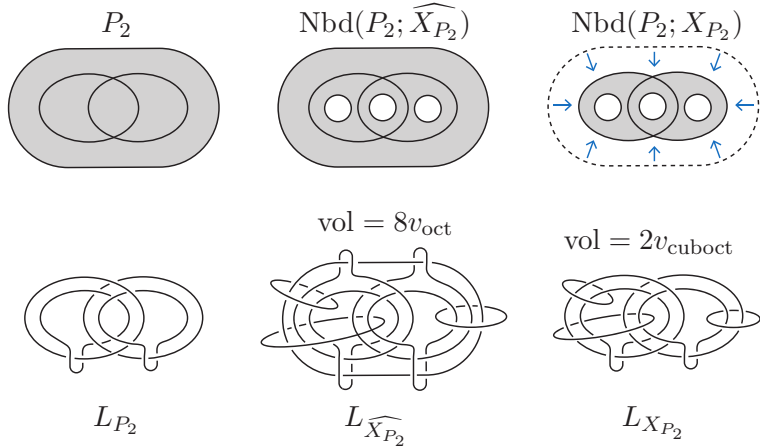}
\begin{picture}(400,0)(0,0)

\put(90,0){$L_{P_2}$}
\put(185,0){$L_{\widehat{X_{P_2}}}$}
\put(290,0){$L_{X_{P_2}}$}

\put(93,149){$P_2$}
\put(168,149){$\Nbd ( P_2 ; \widehat{X_{P_2}} )$}
\put(271,149){$\Nbd ( P_2 ; X_{P_2} )$}

\put(175,74){$\vol = 8 v_{\mathrm{oct}}$}
\put(268,67){$\vol = 2 v_{\mathrm{cuboct}}$}
\end{picture}
\caption{The divide $P_2$ and the associated links $L_{P_2}$, $L_{\widehat{X_{P_2}}}$ and $L_{X_{P_2}}$.}
\label{figure:divide_link_2v48}
\end{figure}
The link $L_{P_2}$ of $P_2$ is shown on the left bottom of the figure. 
The middle top of Figure \ref{figure:divide_link_2v48} illustrates a simplified figure of the polyhedron $\Nbd ( P_2 ; \widehat{X_{P_2}} )$. 
Below that, a diagram of the link $L_{\widehat{X_{P_2}}}$ is drawn. 
Since $\widehat{X_{P_2}}$ has two vertices, both of which are of Type 1, $L_{\widehat{X_{P_2}}}$ is a hyperbolic link of volume $8 v_{\mathrm{oct}}$. 
The right top of Figure \ref{figure:divide_link_2v48} is then a simplified figure of the polyhedron $\Nbd ( P_2 ; X_{P_2} )$, which has two Type $2$ vertices. 
Thus, by Theorem \ref{theo:main}, the link $L_{X_{P_2}}$ shown on the right bottom in Figure \ref{figure:divide_link_2v48} is a hyperbolic link of volume $2 v_{\mathrm{cuboct}}$.
\end{example}

\begin{example}
\label{ex:divide_link_generic}
Let $P_3$ be  the divide shown on the left in Figure \ref{figure:divide_link_generic}. 
\begin{figure}[htbp]
\centering\includegraphics[width=11cm]{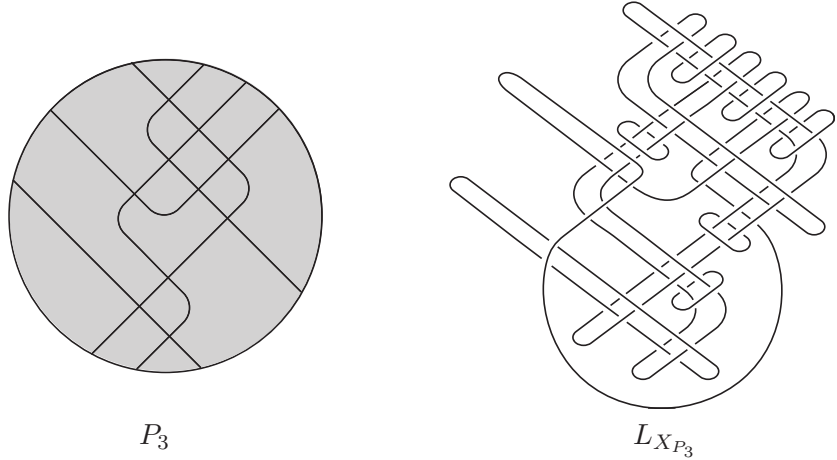}
\begin{picture}(400,0)(0,0)

\put(93,0){$P_3$}
\put(280,0){$L_{X_{P_3}}$}

\end{picture}
\caption{The divide $P_3$ and the associated link $L_{X_{P_3}}$.}
\label{figure:divide_link_generic}
\end{figure}
The polyhedron $\Nbd ( P_3 ; X_{P_3} )$ has one vertex each of Types 1, 2 and 3, four vertices of Type 4-2 and three vertices of Type 5-3. 
Then, by Theorem \ref{theo:main}, $\Int M_{P_3}$ has a hyperbolic manifold of volume 
\[ 1 \cdot 4 v_{\mathrm{oct}} + 1 \cdot v_{\mathrm{cuboct}} + 1 \cdot 10 v_{\mathrm{tet}} + 4 \cdot 2 v_{\mathrm{oct}} + 3 \cdot v_{\mathrm{oct}} = 
10 v_{\mathrm{tet}} + 15 v_{\mathrm{oct}} + v_{\mathrm{cuboct}} = 77.1534 \cdots \]


\end{example}

The following theorem shows that the upper bound of the volumes of links of divides given in Corollary \ref{cor:main} is asymptotically sharp. 

\begin{theorem}
\label{thm: the upper boune is sharp}
There exist a sequence $\{ P_n \}_{n \in \NN}$ of divides satisfying the following: 
 \begin{enumerate}
 \item
The polyhedron $X_{P_n}$ has $n$ vertices, and their types are all $5$-$3$. 
 \item
 The volume of $L_{P_n}$ is equal to $n v_{\mathrm{oct}}$ asymptotically, that is, we have 
 \[\lim_{n \to \infty} \frac{\vol (S^3 - L_{P_n})}{n v_{\mathrm{oct}}} = 1 , \]  
where $\vol (S^3 - L_{P_n})$ is the hyperbolic volume of $S^3 - L_{P_n}$. 
 \end{enumerate}
\end{theorem}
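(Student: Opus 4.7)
The plan is to exhibit an explicit sequence $\{P_n\}$ whose polyhedra $X_{P_n}$ contain exactly $n$ vertices, all of Type $5$-$3$, and then to combine Theorem \ref{theo:main} with a quantitative Dehn filling estimate to force the volume ratio to converge to $1$. The building block is the divide $P_1$ from Example \ref{ex:divide_link_v8}, which has a single Type $5$-$3$ vertex. A natural sequence is obtained by stringing together $n$ copies of this elementary pattern along a one-parameter family of arcs, so that each new double point is again of Type $5$-$3$ (equivalently, each new crossing is joined to two fresh endpoints on $\partial D$). Connectedness, primeness, and the type of each double point are then all local conditions that can be verified from the picture.

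Once $P_n$ is in hand, Theorem \ref{theo:main} gives $\vol(\Int M_{P_n}) = n\, v_{\mathrm{oct}}$ at once, and since $E(L_{P_n})$ is obtained from $M_{P_n}$ by Dehn filling along the curves $l_1, \ldots, l_{m_n}$ corresponding to the internal regions of $X_{P_n}$, Thurston's hyperbolic Dehn surgery theorem already yields the upper inequality $\vol(S^3 - L_{P_n}) \leq n\, v_{\mathrm{oct}}$. To upgrade this to asymptotic equality I would invoke the quantitative Dehn filling inequality of Futer--Kalfagianni--Purcell: if $\ell_i(P_n)$ denotes the Euclidean length of the $i$-th filling slope on the corresponding cusp torus of $M_{P_n}$, then
\[
\vol(S^3 - L_{P_n}) \;\geq\; \left(1 - \sum_{i=1}^{m_n} \left(\frac{2\pi}{\ell_i(P_n)}\right)^{\!2}\right)^{\!3/2} \vol(M_{P_n}),
\]
whenever each $\ell_i(P_n) > 2\pi$. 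Combined with the upper bound, proving the theorem reduces to verifying that the sum inside the parentheses tends to $0$.

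The main obstacle is controlling these slope lengths in the geometry of $M_{P_n}$. The Euclidean structure on each cusp is explicit: it is tiled by the cusp triangles of the ideal regular octahedra attached to the adjacent Type $5$-$3$ vertices, as read off from the polyhedral decomposition in Lemma \ref{lem:type-5} and its proof. The plan is to arrange the sequence $P_n$ so that each internal region of $X_{P_n}$ is surrounded by an unbounded chain of Type $5$-$3$ vertices as $n\to\infty$; then the number of Euclidean tiles on each cusp torus grows without bound, while the homotopy class of the filling slope $l_i$ remains represented by a curve crossing all of them. This forces $\ell_i(P_n)\to\infty$ and makes the Futer--Kalfagianni--Purcell deficit $o(1)\cdot \vol(M_{P_n})$, which gives the desired limit. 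The hardest technical step will be translating the combinatorial structure of $P_n$ into an estimate of the developing map on each cusp, since the cusps of different $l_i$ may be glued out of very different numbers of octahedral triangles; designing $P_n$ so that all cusps grow uniformly, rather than concentrating the volume into a few short slopes, is what makes the asymptotic estimate work.
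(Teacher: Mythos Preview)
Your overall strategy matches the paper's: build $P_n$ from Type $5$-$3$ vertices, apply Theorem~\ref{theo:main} to get $\vol(M_{P_n})=n\,v_{\mathrm{oct}}$, and then use the Futer--Kalfagianni--Purcell estimate for the Dehn filling. Two points, however, deserve correction or sharpening.

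First, your version of the FKP inequality is not the standard one: the theorem bounds the filled volume below by $\bigl(1-(2\pi/\ell_{\min})^2\bigr)^{3/2}\vol(M)$, where $\ell_{\min}$ is the \emph{minimum} slope length over all filled cusps, not a sum. With a sum the inequality as you wrote it could go negative even when each individual slope is long.

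Second, and more substantively, the difficulty you flag as ``the hardest technical step'' --- arranging that \emph{every} internal region is bordered by an unbounded chain of vertices so that all filling slopes grow --- is precisely what the paper avoids by a clever choice of $P_n$. Rather than stringing copies of $P_1$ side by side (which would produce many small monogon regions, each with bounded cusp and bounded slope length), the paper takes the $n$-fold branched cover of $P_1$ over a point in its monogon. This yields a divide whose link $L_{P_n}$ is the $n$-chain link and whose polyhedron $X_{P_n}$ has a \emph{single} internal region, an $n$-gon touching all $n$ vertices. Thus only one Dehn filling is performed. The cusp torus is then tiled by $4n$ Euclidean unit squares coming from the octahedral cusp cross-sections (squares, not triangles), and the meridian of the extra component is computed explicitly to have length $\sqrt{(n-4)^2+n^2}$. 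Plugging this single slope length into FKP gives the lower bound directly, with no need to balance multiple cusps. Your plan would work if you specified $P_n$ this way; as written, the construction is too vague to guarantee that $\ell_{\min}\to\infty$.
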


\begin{proof}
Let $P_n$ be the divide shown on the left in Figure \ref{figure:chain_link}. 
\begin{figure}[htbp]
\centering\includegraphics[width=14cm]{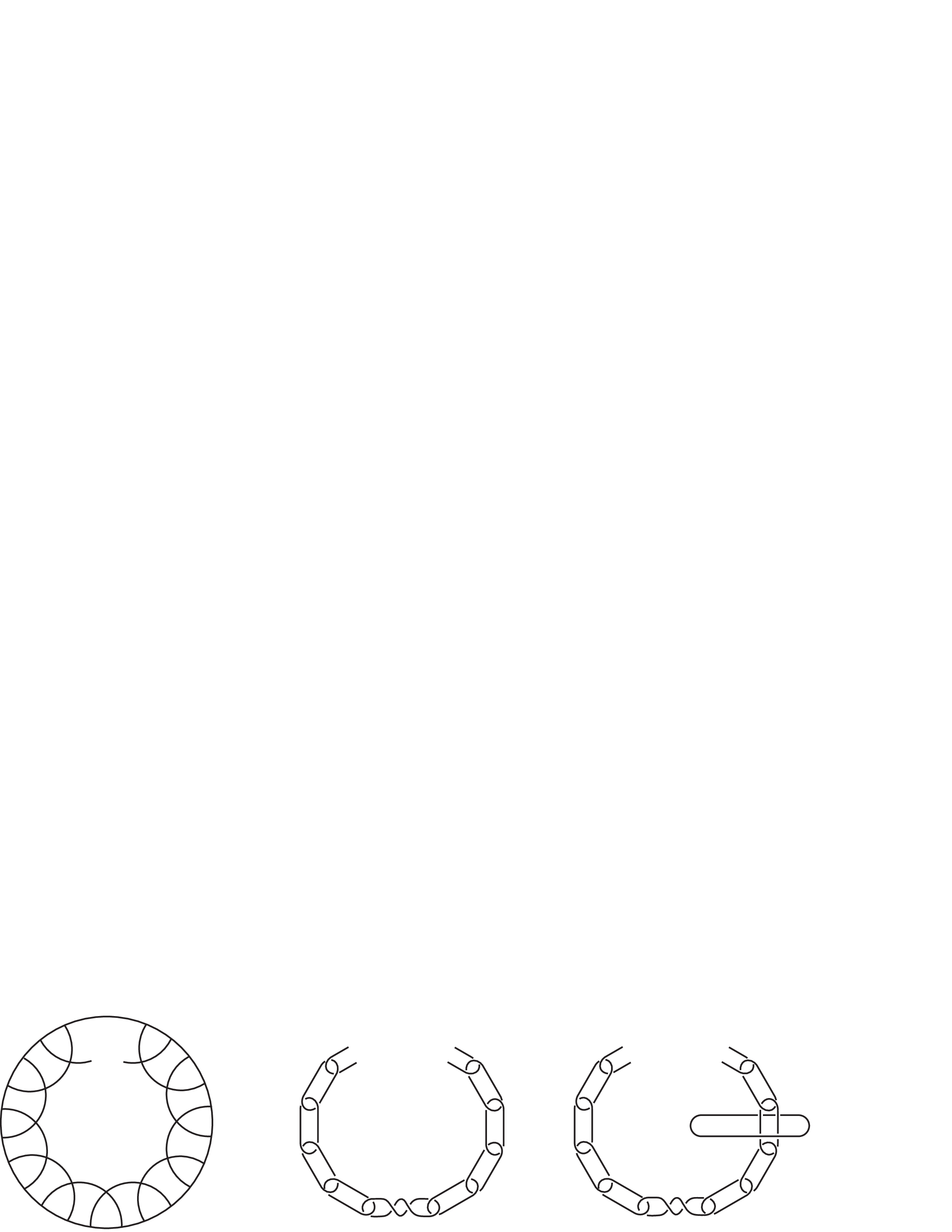}
\begin{picture}(400,0)(0,0)
\put(47,100){$\cdots$}
\put(191,96){$\cdots$}
\put(327,96){$\cdots$}
\put(48,0){$P_n$}
\put(190,0){$L_{P_n}$}
\put(323,0){$L_{X_{P_n}}$}
\put(405,60){$l$}
\put(47,60){$R_n$}
\end{picture}
\caption{A divide $P_n$ and the associated links $L_{P_n}$ and $L_{X_{P_n}}$.}
\label{figure:chain_link}
\end{figure}
Note that $P_n$ is the $n$-fold covering of the divide in Example \ref{ex:divide_link_v8} branched over a point in the $1$-gon region. 
Then, clearly,  $X_{P_n}$ has $n$ vertices $c_1, c_2 \ldots , c_n$, and their types are all $5$-$3$.  
The links $L_{P_n}$ and $L_{X_{P_n}}= L_{P_n} \cup l$ are as drawn on the middle and right, respectively, in Figure \ref{figure:chain_link}. 
By Theorem $\ref{theo:main}$, the link $L_{X_{P_n}}$ is hyperbolic, and the volume of $\Int M_{P_n} = S^3 - L_{X_{P_n}}$ is $n v_{\mathrm{oct}}$. 
The link $L_{P_n}$ is an $n$-chain link. 
Thus,  by Neumann-Reid \cite{NR92}, $L_{P_n}$ is also hyperbolic for $n \geq 5$.  
Let $R_n$ be the unique internal region of $P_n$. 
Define a simple closed curve $r$ by $r = R_n \cap \partial \Nbd (P_n ; X_{P_n})$, and denote by 
$T_r$ the torus component of $\partial M_{P_n}$ corresponding to $r$, namely
$T_r = \pi^{-1} (r)$. 
For each vertex $c_i$, the 3-manifold $\pi^{-1}(C_i)$ is described in Lemma \ref{lem:type-5}. 
In particular, $\pi^{-1}(C_i)$ is a genus-$2$ handlebody obtained by gluing faces of a truncated ideal regular hyperbolic octahedron each other as shown in Figure \ref{figure:type5_decomposition}. 
For our purpose, it is more prospective to draw each piece $\pi^{-1}(C_i)$ as on the right side in Figure \ref{figure:chain_link_vertex}, 
see Yoshida \cite{Yos21}.
\begin{figure}[htbp]
\centering\includegraphics[width=11cm]{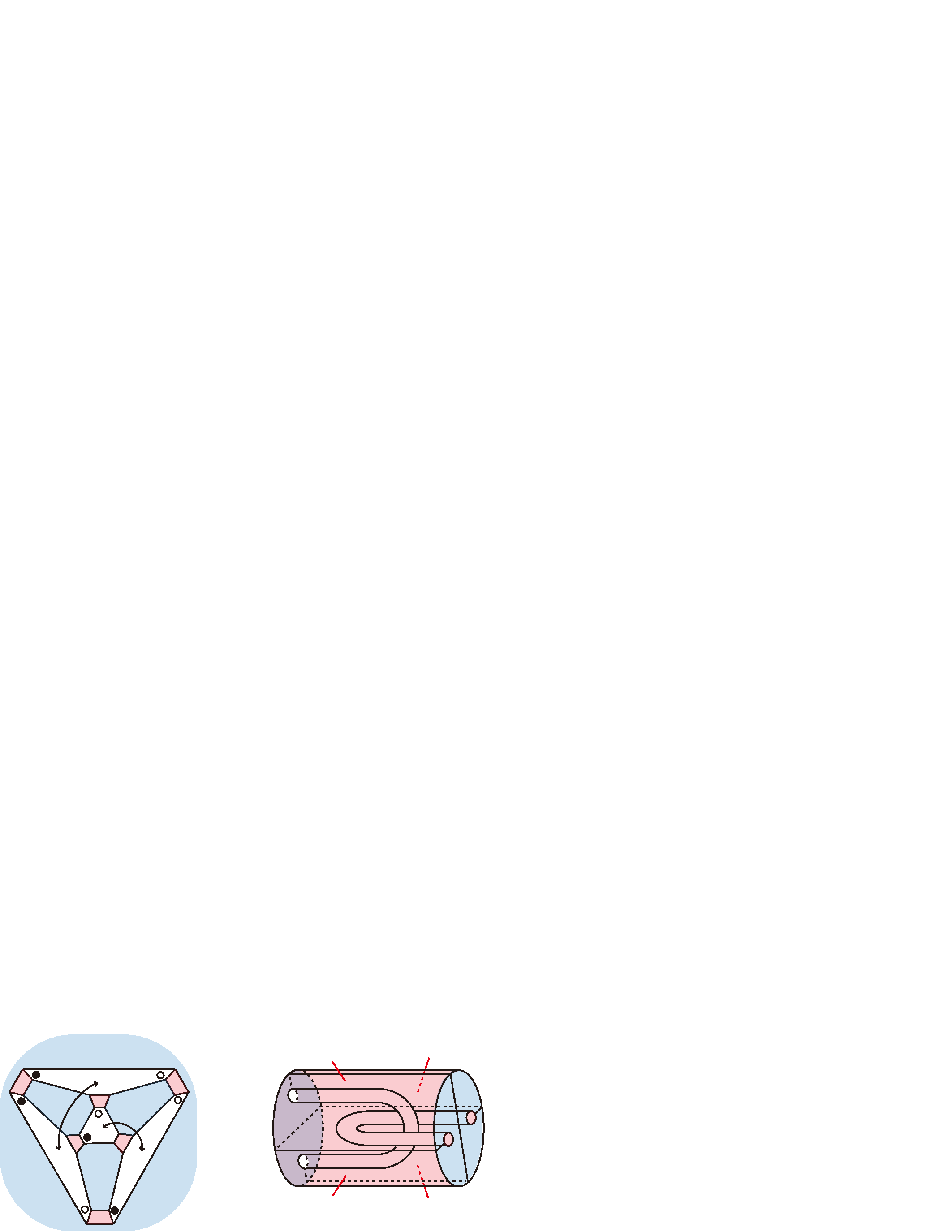}
\begin{picture}(400,0)(0,0)
\put(105,45){\color{blue}$1$}
\put(130,90){\color{blue}$2$}
\put(78,90){\color{blue}$3$}
\put(60,40){\color{blue}$4$}

\put(105,94){\color{red}$1$}
\put(90,67){\color{red}$2$}
\put(120,67){\color{red}$3$}
\put(105,19){\color{red}$4$}
\put(155,104){\color{red}$5$}
\put(55,104){\color{red}$6$}

\put(232,85){\color{blue}$1$}
\put(235,68){\color{blue}$2$}
\put(332,57){\color{blue}$4$}
\put(345,70){\color{blue}$3$}

\put(250,123){\color{red}$4$}
\put(253,25){\color{red}$5$}
\put(318,25){\color{red}$1$}
\put(320,127){\color{red}$2$}

\put(190,77){$=$}
\end{picture}
\caption{The piece $\pi^{-1}(C_i)$.}
\label{figure:chain_link_vertex}
\end{figure}
Recall that the ideal regular hyperbolic octahedron has a maximal horocusp section consisting of six Euclidean unit squares. 
Thus, we can assume that each cusp face of the truncated ideal regular hyperbolic octahedron is the unit square. 
Notice that the annulus on the outer side of the right figure in Figure \ref{figure:chain_link_vertex}, which corresponds to 
the component $l$, consists of four of these squares: those labeled $1, 2, 4, 5$. 
The 3-manifold $M_{P_n}$ is obtained by gluing $\pi^{-1}(C_1), \pi^{-1}(C_2), \ldots, \pi^{-1}(C_n)$ according to the combinatorial structure of $X_{P_n}$. 
Then, the boundary torus $T_r$ is equipped with the Euclidean structure that consists of $4n$ Euclidean unit squares. 
By comparing the diagram of $L_{X_{P_n}}$ on the right in Figure \ref{figure:chain_link} and the union of the pieces $\pi^{-1}(C_i)$'s drawn on 
the right in Figure \ref{figure:chain_link_vertex}, we can find the meridian $\mu$ of $l$ on the torus, and in fact, we see that the cusp shape 
of  $L_{X_{P_n}}$ corresponding to the component $l$ is as in Figure \ref{figure:chain_link_cusp_shape}. 
\begin{figure}[htbp]
\centering\includegraphics[width=6cm]{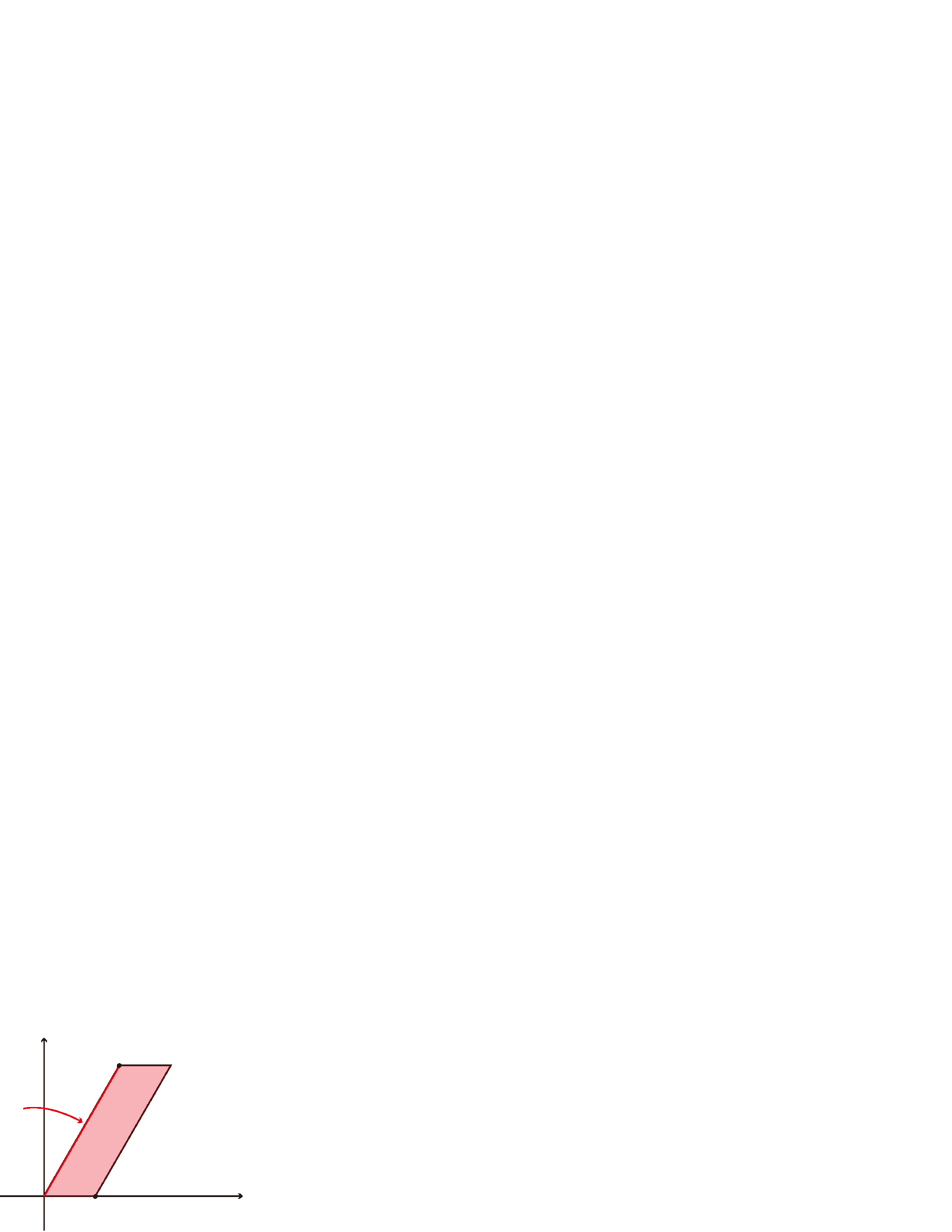}
\begin{picture}(400,0)(0,0)
\put(135,25){$0$}
\put(178,25){$4$}
\put(166,135){$n-4 + \sqrt{-1} n$}
\put(45,95){\color{red} the meridian of $l$}
\put(260,120){$\CC$}
\end{picture}
\caption{The cusp shape of $L_{X_{P_n}}$ corresponding to the component $l$.}
\label{figure:chain_link_cusp_shape}
\end{figure}
In particular, the length $\slope (R_n)$ of the meridian $\mu$ is $\sqrt{(n-4)^2 + n^2}$ with respect to the Euclidean structure. 
The exterior $E (L_{P_n})$ of the link $L_{P_n}$ is obtained by performing Dehn filling along $T_l$ whose slope is exactly $\mu$.  
Thus, the length $\slope (R_n)$ of the filling slope is $\sqrt{(n-4)^2 + n^2}$. 
By Futer-Kalfagianni-Purcell \cite{FKP08}, we have the inequalities
\[
\left( 1 - \left( \frac{2 \pi}{ \slope (R_n) } \right)^2 \right)^{3/2}  n v_{\mathrm{oct}} 
\leq 
\vol (S^3 - L_{P_n}) 
< n v_{\mathrm{oct}} , 
\] 
which implies that $ \vol (S^3 - L_{P_n}) / n v_{\mathrm{oct}} $ approaches to $1$ as $n$ goes to infinity. 
\end{proof}

\section*{Acknowledgments} 
The authors wish to express their gratitude to 
Masaharu Ishikawa, Kazuhiro Ichihara and Hironobu Naoe for their very helpful suggestions and comments.

\end{document}